\numberwithin{equation}{section}
\setlist[enumerate,1]{label={\rm(\roman*)}, ref={\rm\roman*}} 
\newcounter{dummy}
\newcommand\myitem[1][]{\item[{\rm(}#1\rm{)}]\refstepcounter{dummy}\def\@currentlabel{#1}}
\theoremstyle{plain}
\newtheorem{theorem}{Theorem}[section]
\newtheorem{thmx}{Theorem}
\newtheorem{lemma}[theorem]{Lemma}
\newtheorem{applemma}[theorem]{Lemma}
\newtheorem{proposition}[theorem]{Proposition}
\newtheorem{corollary}[theorem]{Corollary}
\theoremstyle{definition}
\newtheorem{definition}[theorem]{Definition}
\newtheorem{example}[theorem]{Example}
\theoremstyle{remark}
\newtheorem{remark}[theorem]{Remark}
\newcommand{\NN}{\mathbb{N}} 
\newcommand{\ZZ}{\mathbb{Z}} 
\newcommand{\QQ}{\mathbb{Q}} 
\newcommand{\RR}{\mathbb{R}} 
\newcommand{\kk}{\mathds{k}} 
\newcommand{\one}{\mathds{1}} 
\newcommand{\q}{\; / \;}
\newcommand\inj{\hookrightarrow}
\newcommand\surj{\twoheadrightarrow}
\DeclareMathOperator{\Hom}{Hom}
\DeclareMathOperator{\Ker}{Ker} 
\DeclareMathOperator{\Coker}{Coker} 
\DeclareMathOperator{\lcm}{lcm} 
\DeclareMathOperator{\CHAR}{char} 
\newcommand{\ba}{\mathbf{a}} 
\newcommand{\bb}{\mathbf{b}} 
\newcommand{\bd}{\mathbf{d}} 
\newcommand{\bB}{\mathbf{B}} 
\newcommand{\be}{\mathbf{e}} 
\newcommand{\bE}{\mathbf{E}} 
\newcommand{\bw}{\mathbf{w}} 
\newcommand{\bv}{\mathbf{v}} 
\newcommand{\bu}{\mathbf{u}} 
\newcommand{\bm}{\mathbf{m}} 
\newcommand{\bX}{\mathbf{X}} 
\newcommand{\bY}{\mathbf{Y}} 
\newcommand{\bS}{\mathbf{S}} 
\newcommand{\ua}{\mathrm{a}}
\newcommand{\ub}{\mathrm{b}}
\newcommand{\uu}{\mathrm{u}}
\newcommand{\uv}{\mathrm{v}}
\newcommand{\uB}{\mathrm{B}}
\newcommand\fa{\mathfrak{a}}
\newcommand\fc{\mathfrak{c}}
\newcommand\fI{\mathfrak{I}}
\newcommand\fj{\mathfrak{j}}
\newcommand\fM{\mathfrak{M}}
\newcommand\fx{\mathfrak{x}}
\newcommand\fs{\mathfrak{s}}
\newcommand\fY{\mathfrak{Y}}
\newcommand\sC{\mathscr{C}}
\newcommand\sF{\mathscr{F}}
\newcommand\sJ{\mathscr{J}}
\newcommand\sM{\mathscr{M}} 
\newcommand\sO{\mathscr{O}}
\newcommand\sR{\mathscr{R}}
\newcommand\sY{\mathscr{Y}}
\newcommand\sD{\mathscr{D}}
\renewcommand{\AA}{\mathbb{A}} 
\DeclareMathOperator{\Spec}{Spec} 
\DeclareMathOperator{\GSpec}{Spec} 
\DeclareMathOperator{\bl}{\mathbf{Bl}} 
\DeclareMathOperator{\tBl}{\mathfrak{Bl}} 
\DeclareMathOperator{\Bl}{\mathscr{B}l} 
\DeclareMathOperator{\Pic}{Pic} 
\DeclareMathOperator{\codim}{codim} 
\DeclareMathOperator{\ZR}{ZR} 
\DeclareMathOperator{\logord}{log-ord} 
\DeclareMathOperator{\inv}{inv} 
\DeclareMathOperator{\MC}{MC} 
\newcommand{\gp}{\mathrm{gp}} 
\newcommand{\GrpSch}{\mathrm{Grp-Sch}} 
\newcommand{\sm}{\mathrm{sm}} 
\newcommand{\logsm}{\mathrm{log-sm}} 
\newcommand{\red}{\mathrm{red}} 
\newcommand{\extd}{\mathrm{ext}} 
\newcommand{\rig}{\mathrm{rig}} 
\newcommand{\Gm}{\mathbb{G}_m}
\DeclareMathOperator{\SProj}{\underline{\mathscr{P}roj}} 
\newcommand\xtwoheadrightarrow[2][]{%
  \xrightarrow[#1]{#2}\mathrel{\mkern-14mu}\rightarrow
}
\newcommand{\ler}{\mathrm{ler}}
\newcommand{\lr}{\mathrm{lr}}
\newcommand{\clr}{\mathrm{clr}}
\newcommand{\std}{\mathrm{std}}
\newcommand{\ros}{\mathrm{ros}} 
\newcommand{\destack}{\mathrm{destack}} 
\renewcommand*\env@matrix[1][*\c@MaxMatrixCols c]{%
  \hskip -\arraycolsep
  \let\@ifnextchar\new@ifnextchar
  \array{#1}}
\newcommand{\spref}[1]{\href{http://stacks.math.columbia.edu/tag/#1}{#1}}
\newcommand{\spcite}[1]{\cite[\spref{#1}]{stacks-project}}
\newcommand{\lra}{\longrightarrow}
\newcommand{\longinj}{\xhookrightarrow{\hphantom{aaa}}}
\def\lowsimeq{\vbox to 0pt{\vss\hbox{$\scriptstyle\simeq$}\vskip-1.5pt}}
\title{Logarithmic resolution via multi-weighted blow-ups}
\author{Dan Abramovich}
\address{Department of Mathematics, Brown University, Box 1917, 151 Thayer Street, Providence, RI 02912, USA}
\email{dan\_abramovich@brown.edu}
\author{Ming Hao Quek}
\address{Department of Mathematics, Brown University, Box 1917, 151 Thayer Street, Providence, RI 02912, USA}
\email{ming\_hao\_quek@brown.edu}
\begin{document}


\maketitle

\begin{prelims}

\DisplayAbstractInEnglish

\bigskip

\DisplayKeyWords

\medskip

\DisplayMSCclass

\end{prelims}


\newpage

\setcounter{tocdepth}{1}

\tableofcontents


\section{Introduction}\label{intro}

Throughout this paper, we work over a field $\kk$ of characteristic zero.

\subsection{Recent techniques for resolution of singularities in characteristic zero}\label{1.1}

We revisit the celebrated theorem of Hironaka, see \cite{hironaka-resolution}, that one can resolve the singularities of a reduced, closed, singular subscheme $X$ of a smooth scheme $Y$ over $\kk$, in a way that is functorial with respect to smooth morphisms of such pairs $(X \subset Y)$. Over the years, the proof of this theorem has seen simplifications, for example by Bierstone--Milman,  see \cite{bierstone-milman-desingularization}, by Encinas--Villamayor, see \cite{encinas-villamayor-desingularization}, and by W{\l}odarczyk, see \cite{wlodarczyk-simple-hironaka-resolution}. Most recently, it was shown independently by Abramovich--Temkin--W{\l}odarczyk, see \cite{abramovich-temkin-wlodarczyk-weighted-resolution}, and by McQuillan, see \cite{mcquillan-resolution}, that one can do this by repeatedly blowing up the ``worst singular locus'', although one has to instead work with stack-theoretic \emph{weighted blow-ups} and admit \emph{smooth Deligne--Mumford stacks} as ambient spaces.

In addition, one typically requires, for the sake of applications, that the singular locus of $X$ is transformed under the resolution into a simple normal crossing (snc) divisor. This was a feature of Hironaka's theorem in \cite{hironaka-resolution}, although it was only recently in a different paper of Abramovich--Temkin--W{\l}odarczyk \cite{abramovich-temkin-wlodarczyk-principalization-of-ideals-on-toroidal-orbifolds} that logarithmic geometry was first accessed as a tool to account for this requirement, and one does so by encoding exceptional divisors as logarithmic structures.

We remark, however, that the resolution algorithm in \cite{abramovich-temkin-wlodarczyk-weighted-resolution} does not address the aforementioned requirement. It is thus natural to ask for an amalgamation of the two aforementioned techniques, as depicted below:

\[
    \begin{tikzcd}
    \framebox{\begin{tabular}{c}
        logarithmic geometry in \\
        the service of resolution
    \end{tabular}} \arrow[to=1-4, dotted] & & & \framebox{?}\\ 
    \framebox{\begin{tabular}{c}
        resolution in the sense of \\ Hironaka \cite{hironaka-resolution}, \\ 
        Bierstone--Milman \cite{bierstone-milman-desingularization}, \\
        Encinas--Villamayor \cite{encinas-villamayor-desingularization}
    \end{tabular}} \arrow[to=1-1, rightsquigarrow, "\textrm{\cite{abramovich-temkin-wlodarczyk-principalization-of-ideals-on-toroidal-orbifolds}}"] \arrow[to=2-4, rightsquigarrow, "\textrm{\cite{abramovich-temkin-wlodarczyk-weighted-resolution,mcquillan-resolution}}"] & & & \framebox{\begin{tabular}{c}
        stack-theoretic \\
        weighted blow-ups \\
        in the service \\
        of resolution
    \end{tabular}} \arrow[to=1-4, dotted]
    \end{tikzcd}
\]

This goal was recently realized by Quek in \cite{quek-weighted-log-resolution}, where weighted blow-ups in \cite{abramovich-temkin-wlodarczyk-weighted-resolution} are replaced by their logarithmic counterpart -- \emph{weighted toroidal blow-ups}. However, even if one takes a pair $(X \subset Y)$ from before as input for the algorithm in \cite{quek-weighted-log-resolution}, one is inevitably led to admit \emph{toroidal Deligne--Mumford stacks} as ambient spaces; \textit{i.e.}~these are logarithmically smooth over $\kk$ but not necessarily smooth over $\kk$. As a consequence, one cannot expect to resolve the singularities of $X$ solely via weighted toroidal blow-ups, and the best one can hope for at the end is \emph{toroidal singularities}, 
where the singular locus of $X$ is now transformed into a divisor with \emph{toroidal support}. Nonetheless, this is not a concern since one can then apply resolution of toroidal singularities; see \cite[Theorem 11*]{kempf-kundsen-faye-mumford-toroidal-embeddings}.

In this paper, we propose a different candidate for \framebox{?} above: namely, we use a construction of Satriano in \cite[Section 3]{satriano-canonical-artin-stacks-for-log-smooth-schemes}
to refine the weighted toroidal blow-ups in \cite{quek-weighted-log-resolution} to \emph{multi-weighted blow-ups}. This is carried out in Section~\ref{C:multi-weighted-blow-ups:canonical-aspects}, where certain multi-weighted blow-ups are realized as canonical Artin stacks over weighted toroidal blow-ups. The reader can also find, in Section~\ref{C:multi-weighted-blow-ups:local-aspects}, an account of local aspects of multi-weighted blow-ups. The key advantage of using multi-weighted blow-ups over weighted toroidal blow-ups is that we remain in the ideal realm of \emph{smooth ambient spaces}, and hence we can do without resolution of toroidal singularities at the end. However, the trade-off is that one has to work more broadly with \emph{Artin stacks} as ambient spaces. 

\begin{xpar}[Ambient spaces]\label{X:ambient-spaces}
The ambient spaces in the resolution algorithm of this paper are \emph{smooth, toroidal Artin stacks over $\kk$}. 

For the definition of a toroidal $\kk$-scheme (resp.\ more generally, a toroidal Artin stack over $\kk$), we refer the reader to \cite[Appendix B]{quek-weighted-log-resolution} (resp.\ \cite[Section 3.3]{abramovich-temkin-wlodarczyk-toroidal-destackification-kummer-blow-ups}) and the references therein. In brief, ``toroidal'' is a synonym for ``logarithmically regular'' or, equivalently in characteristic zero, ``logarithmically smooth over $\kk$''. 

With the exception of Section~\ref{4.1}, this paper mostly deals with toroidal Artin stacks over $\kk$ that are also smooth -- these are simply pairs of the form $(Y,E)$, where $Y$ is a smooth Artin stack over $\kk$ and $E \subset Y$ is either $\emptyset$ or a normal crossings (nc) divisor. Indeed, for such a pair $(Y,E)$, set $U := Y \smallsetminus E$, $j \colon U \inj Y$, and the logarithmic structure on $Y$ to be that dictated by $E$, \textit{i.e.} $\alpha_Y \colon (\sM_Y := j_\ast(\sO_U^\ast) \cap \sO_Y) \inj \sO_Y$. Then the logarithmic Artin stack $(Y,\sM_Y)$ is logarithmically smooth over $\kk$. If $E$ is a snc divisor, we say $(Y,\sM_Y)$ or $(Y,E)$ is \emph{smooth, strict toroidal}.\footnote{Here ``strict''  refers to ``strict normal crossings divisor'', a synonym for ``snc divisor''.}

Any smooth, toroidal Artin stack $Y$ over $\kk$ admits a smooth cover by a smooth $\kk$-scheme which, when endowed by the logarithmic structure given by the pull-back of $\sM_Y$, is a smooth, strict toroidal $\kk$-scheme. It is important for the reader to be aware that, since the discussions in this paper turn out to be local in the strict smooth topology, it suffices to study the case where $Y$ is a smooth, strict toroidal $\kk$-scheme; see Remark~\ref{R:case-of-artin-stacks}. 
\end{xpar}

\subsection{Main objectives}\label{1.2}

Consider a reduced, closed substack $X$ in a smooth, toroidal Artin stack $Y$ over $\kk$. We always regard $X$ as a logarithmic Artin stack over $\kk$ by pulling the logarithmic structure of $Y$ back to $X$. Such pairs $(X \subset Y)$ form the objects of a category, where a morphism between pairs $(\widetilde{X} \subset \widetilde{Y}) \to (X \subset Y)$ is a Cartesian square of stacks \[
    \begin{tikzcd}
    \widetilde{X} = X \times_Y \widetilde{Y} \arrow[to=1-2, hookrightarrow] \arrow[to=2-1] & \widetilde{Y} \arrow[to=2-2, "f"] \\
    X \arrow[to=2-2, hookrightarrow] & Y
    \end{tikzcd}
\]
for a strict, smooth, and surjective morphism $f \colon \widetilde{Y} \to Y$.\footnote{We note that this Cartesian square of stacks is also Cartesian in the category of logarithmic stacks since $f$ is strict.} {We refer to such a morphism as a \emph{strict, smooth, surjective morphism of pairs}. At times we might drop surjectivity as a condition, in which case we say $f$ is a \emph{strict, smooth morphism of pairs}. 

For such a pair $X \subset Y$, let $X^{\logsm}$ denote the toroidal (\textit{i.e.} logarithmically smooth) locus of $X$, and we typically require $X$ to be \emph{generically toroidal}; \textit{i.e.} $X^{\logsm}$ is dense in $X$. Since $Y$ is smooth and toroidal, $X^\logsm$ is contained in the smooth locus $X^\sm$ of $X$.} The primary goal of this paper is the following. 

\begin{thmx}[Logarithmic embedded resolution]\label{T:logarithmic-embedded-resolution}
  Given a reduced, generically toroidal, 
  closed substack $X$ of a smooth, toroidal Artin stack $Y$ over $\kk$, there exists a canonical sequence of multi-weighted blow-ups $\Pi \colon Y_N$ $\xrightarrow{\pi_N} Y_{N-1} \xrightarrow{\pi_{N-1}} \dotsb \xrightarrow{\pi_1} Y_0 = Y$, together with proper transforms $X_i \subset Y_i$ of $X$, such that: \begin{enumerate}
    \item\label{T:ler-1} $X_N$ is a smooth, toroidal Artin stack over $\kk$, and so is each $Y_i$;  
    \item $\Pi$ is an isomorphism over $X^{\logsm} \subset X$; 
    \item $\Pi^{-1}(X \smallsetminus X^{\logsm})$ is a snc divisor on $X_N$; 
    \item each $\pi_i$ is birational, surjective, universally closed, and factors as $Y_i \to \bY_i \to Y_{i-1}$, where $Y_i \to \bY_i$ is a good moduli space of $Y_i$ relative to $Y_{i-1}$, $\bY_i$ is normal, and $\bY_i \to Y_{i-1}$ is a schematic blow-up $($whence birational and projective$)$.
\end{enumerate}
{This procedure $(X \subset Y) \mapsto (X_N \subset Y_N)$ is functorial with respect to strict, smooth morphisms of such pairs $X \subset Y$ $($whether or not surjective$)$. Moreover, if\, $Y$ is strict toroidal, then so is $X_N$.}
\end{thmx}

If $Y$ is smooth with the trivial logarithmic structure, then $X^{\logsm} = X^{\sm}$, so in particular we recover logarithmic embedded resolution in the classical sense. We remark too that the logarithmic structure on each $Y_i$ is defined by combining ``the new exceptional divisors on $Y_i$'' with the logarithmic structure on $Y_{i-1}$.

In Section~\ref{4.3} we will prove Theorem~\ref{T:logarithmic-embedded-resolution}, as well as the other theorems in this introduction. {As anticipated in Section~\ref{1.1}, the theorems in this introduction are deduced from the analogous theorems in \cite{quek-weighted-log-resolution} because the multi-weighted blow-ups in this paper are obtainable from the weighted toroidal blow-ups in \cite{quek-weighted-log-resolution} via Satriano's construction in \cite[Section 3]{satriano-canonical-artin-stacks-for-log-smooth-schemes}. For example, Theorems~\ref{T:logarithmic-embedded-resolution} and \ref{T:invariant-drops-intro} are deduced\footnote{More precisely, in this way, one can only deduce Theorems~\ref{T:logarithmic-embedded-resolution} and \ref{T:invariant-drops-intro} under the additional hypothesis that $X \subset Y$ is of pure codimension. We supply an additional argument in Section~\ref{4.3} that does away with that hypothesis. The same argument also works to prove \cite[Theorem 1.1]{quek-weighted-log-resolution} without that hypothesis.} in this way from \cite[Theorem 1.1]{quek-weighted-log-resolution}.}

Concretely, one obtains Theorem~\ref{T:logarithmic-embedded-resolution} by taking at the $(i+1)^{\textup{th}}$ step the multi-weighted blow-up of $Y_i$ along the ``worst singular locus of $X_i$''. We give a formal statement of this procedure in Theorem~\ref{T:invariant-drops-intro} below. To give the reader a sense of what the ``worst singular locus of $X_i$'' is, we recall the following notion from \cite[Section 6.1]{quek-weighted-log-resolution}. 

For a point $p \in \abs{Y}$, one can associate an \emph{invariant} of $X \subset Y$ at $p$ (see Section~\ref{X:log-invariant}), denoted by $\inv_p(X \subset Y)$, which is, simply put, a non-decreasing finite sequence of non-negative rational numbers, whose last entry could be $\infty$. We can well-order the set of all such invariants of pairs at points in $Y$ by the lexicographic order $<$, but with a caveat: our lexicographic order considers the truncation (from the end) of a sequence to be strictly larger than the sequence itself (see Section~\ref{X:well-ordered-set}).

Let $I$ be the underlying ideal of $X \subset Y$. To give the reader a sense of our invariant, let us mention some of its properties:
\begin{enumerate}[label=(\alph*),ref=\alph*]
    \item\label{1.2a} If $X = Y$, then $\inv_p(X \subset Y) = ()$ for every $p \in \abs{Y}$.
    \myitem[a$'$]\label{1.2a'} Otherwise, for every $p \in \abs{Y}$, $\inv_p(X \subset Y) = (0)$ if and only if $p \notin \abs{X}$. If $p \in \abs{X}$, $\inv_p(X \subset Y)$ is greater than or equal to the sequence $(1,\dotsc,1)$ of length equal to $\codim_p(X \subset Y)$ ($:=$ height of $I_p$), and equality holds if and only if $X$ is smooth and toroidal at $p$; see Remark~\ref{R:log-invariant}\eqref{R:li-4}.
    \item\label{1.2b} The first entry of $\inv_p(X \subset Y)$ is the logarithmic order of $I$ at $p$. In particular, it is either a non-negative integer or $\infty$.
    \item\label{1.2c} 
      $\inv$ is upper semi-continuous on $Y$; see Remark~\ref{R:log-invariant}\eqref{R:li-2}.
    \item\label{1.2d} $\inv$ is functorial for logarithmically smooth morphisms of pairs $X \subset Y$; see Remark~\ref{R:log-invariant}\eqref{R:li-3}.
\end{enumerate}

We set $\max\inv(X \subset Y) := \max_{p \in \abs{X}}{\inv_p(X \subset Y)}$, which is functorial with respect to logarithmically smooth, surjective morphisms of pairs $X \subset Y$; see Remark~\ref{R:log-invariant}\eqref{R:li-3}. Then property \eqref{1.2a'} above suggests that the ``worst singular locus of $X$'' can be loosely interpreted as the closed substack of $X$ consisting of points $p \in \abs{X}$ such that $\inv_p(X \subset Y) = \max\inv(X \subset Y)$. For a precise definition of this notion of ``worst singular locus of $X$'', see Section~\ref{X:associated-center}, as well as the proof of Theorem~\ref{T:logarithmic-embedded-resolution}. 

We can now state our next theorem, which when \emph{suitably} iterated, gives Theorem~\ref{T:logarithmic-embedded-resolution}. 

\begin{thmx}\label{T:invariant-drops-intro}
Given a reduced, logarithmically singular, closed substack $X$ of a smooth, toroidal Artin stack $Y$ over $\kk$, there exists a canonical multi-weighted blow-up $\pi \colon Y' \to Y$, with proper transform $X' \subset Y'$ of $X$, such that: \begin{enumerate}
    \item\label{T:idi-1} $Y'$ is a smooth, toroidal Artin stack over $\kk$;
    \item\label{T:idi-2} $\max\inv(X' \subset Y') < \max\inv(X \subset Y)$;
    \item\label{T:idi-3} $\pi$ is an isomorphism away from the closed substack of $X$ consisting of points $p \in \abs{X}$ such that $\inv_p(X \subset Y) = \max\inv(X \subset Y)$;
    \item\label{T:idi-4} $\pi$ is birational, surjective, universally closed, and factors as $Y' \to \bY' \to Y$, where $Y' \to \bY'$ is a good moduli space relative to $Y$, $\bY'$ is normal, and $\bY' \to Y$ is a schematic blow-up $($whence birational and projective$)$.
\end{enumerate}
This procedure ${\sF_{\ler} \colon} (X \subset Y) \mapsto (X' \subset Y')$ is functorial with respect to strict, smooth, surjective morphisms of such pairs $X \subset Y$.
\end{thmx}

One should interpret part \eqref{T:idi-3} as saying that $\pi$ is a multi-weighted blow-up along the ``worst singular locus of $X$'' and part \eqref{T:idi-2} as saying that there is an immediate improvement of singularities after the multi-weighted blow-up. As mentioned before, we emphasize that the logarithmic structure on $Y'$ is defined by combining ``the new exceptional divisors on $Y'$'' with the logarithmic structure on $Y$. 

In addition, these multi-weighted blow-ups in our algorithm are independent of further embeddings, in the following sense. 

\begin{thmx}[Re-embedding principle]\label{T:re-embedding-principle}
Let $X$ be reduced, logarithmically singular, closed substack of a smooth, toroidal Artin stack $Y$ over $\kk$. Let $Y_1 = Y \times \AA^1$, 
where $\AA^1 := \Spec(\kk[x_0])$ is given the trivial logarithmic structure. We embed $X \subset Y = V(x_0) \subset Y_1$. Then: \begin{enumerate}
    \item\label{T:rep-1} For every $p \in \abs{X}$, we have $\inv_p(X \subset Y_1) = (1,\inv_p(X \subset Y))$.
    \item\label{T:rep-2}  Let $\sF_{\ler}(X \subset Y) =: (X' \subset Y')$ and $\sF_{\ler}(X \subset Y_1) =: (X_1' \subset Y_1')$ {as in Theorem~\ref{T:invariant-drops-intro}}. Then $Y'$ is canonically identified with the proper transform of\, $Y  \subset Y_1$ in $Y_1'$, under which $X' = X_1'$.
\end{enumerate}
\end{thmx}

We remark that part \eqref{T:rep-2} can be checked directly, albeit in a tedious fashion. We instead deduce part \eqref{T:rep-2} from \cite[Lemma 1.3(ii)]{quek-weighted-log-resolution} -- the analogous statement for the weighted toroidal blow-ups in \cite{quek-weighted-log-resolution}.

Lastly, Theorem~\ref{T:re-embedding-principle} enables us to prove Theorem~\ref{T:logarithmic-resolution} below by {embedding $X$, locally in the smooth topology, in a smooth $\kk$-scheme $Y$} (with the trivial logarithmic structure) in pure codimension, before applying Theorem~\ref{T:logarithmic-embedded-resolution} and patching these local resolutions.

\begin{thmx}[Logarithmic resolution]\label{T:logarithmic-resolution}
Given a reduced, pure-dimensional Artin stack $X$ of finite type over $\kk$, there exists a birational, surjective, and universally closed morphism $\Pi \colon X' \to X$ such that: \begin{enumerate}
    \item $X'$ is a smooth Artin stack over $\kk$; 
    \item $\Pi$ is an isomorphism over the smooth locus $X^{\sm}$ of $X$;
    \item $\Pi^{-1}(X \smallsetminus X^{\sm})$ is a snc divisor on $X'$;
    \item $X'$ admits a good moduli space $\bX'$ relative to $X$, and $\bX' \to X$ is birational and projective.
\end{enumerate}
This procedure $\sF_\lr \colon X \mapsto X'$ is functorial with respect to smooth morphisms.
\end{thmx}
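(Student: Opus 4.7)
The plan is to reduce to Theorem~\ref{T:logarithmic-embedded-resolution} by smooth-locally embedding $X$ in a smooth $\kk$-scheme with trivial logarithmic structure, applying embedded resolution there, and patching by means of the re-embedding principle of Theorem~\ref{T:re-embedding-principle}.

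First I would cover $X$ by smooth morphisms $\{V_\alpha \to X\}$ from affine $\kk$-schemes, each of which admits a closed embedding $V_\alpha \inj Y_\alpha$ in pure codimension $c_\alpha := \dim Y_\alpha - \dim V_\alpha$ into a smooth affine $\kk$-scheme (for instance $Y_\alpha = \AA^{N_\alpha}$). This is possible because $X$ is pure-dimensional and of finite type over $\kk$; after shrinking, each $V_\alpha$ is pure-dimensional and embeds in pure codimension into an affine space. Endow each $Y_\alpha$ with the trivial logarithmic structure, so that $V_\alpha^{\logsm} = V_\alpha^{\sm}$, and apply Theorem~\ref{T:logarithmic-embedded-resolution} to each pair $(V_\alpha \subset Y_\alpha)$ to obtain a canonical embedded logarithmic resolution $\Pi_\alpha \colon Y'_\alpha \to Y_\alpha$, together with the proper transform $V'_\alpha \subset Y'_\alpha$ of $V_\alpha$.

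Next I would patch the $V'_\alpha$'s. On overlaps $V_{\alpha\beta} := V_\alpha \times_X V_\beta$, both projections to $V_\alpha$ and $V_\beta$ are smooth. To produce canonical transition isomorphisms between the pullbacks of $V'_\alpha$ and $V'_\beta$ to $V_{\alpha\beta}$, I would choose a smooth closed embedding $V_{\alpha\beta} \inj Y_{\alpha\beta}$ into a smooth $\kk$-scheme (for instance into $Y_\alpha \times Y_\beta$ via the graph-type embedding), and note that the two projections $Y_{\alpha\beta} \to Y_\alpha, Y_\beta$ are smooth and restrict to the given smooth morphisms on $V_{\alpha\beta}$. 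Iterated application of the re-embedding principle (Theorem~\ref{T:re-embedding-principle}), together with the functoriality of Theorem~\ref{T:logarithmic-embedded-resolution} under logarithmically smooth surjective morphisms of pairs, identifies the resolution of $V_{\alpha\beta} \subset Y_{\alpha\beta}$ canonically with the pullbacks of the resolutions of $V_\alpha \subset Y_\alpha$ and of $V_\beta \subset Y_\beta$. The canonicity of these identifications yields the cocycle condition on triple overlaps, so the $V'_\alpha$ glue to a smooth Artin stack $X'$ with a morphism $\Pi \colon X' \to X$.

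Properties (i)--(iv) of Theorem~\ref{T:logarithmic-resolution} then follow from the corresponding properties in Theorem~\ref{T:logarithmic-embedded-resolution}, each being local on the target in the smooth topology. For (iv) in particular, the relative good moduli space factorizations $Y'_\alpha \to \bY'_\alpha \to Y_\alpha$ restrict to $V'_\alpha \to \mathbf{V}'_\alpha \to V_\alpha$, which glue — using compatibility of relative good moduli spaces with smooth base change — to the desired factorization $X' \to \bX' \to X$. Functoriality under smooth morphisms $W \to X$ follows by choosing local embeddings for $W$ compatible with those for $X$ and invoking both functoriality of Theorem~\ref{T:logarithmic-embedded-resolution} and the re-embedding principle. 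The main obstacle I anticipate is rigorously justifying the iterated use of the re-embedding principle through all steps of the algorithm, since intermediate ambient stacks acquire nontrivial logarithmic structure from accumulated exceptional divisors; this should follow from the key feature that the new logarithmic structure at each stage is obtained by combining the previous one with newly introduced exceptional divisors, a construction that is compatible with re-embedding.
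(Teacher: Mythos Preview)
Your proposal is correct and follows essentially the same approach as the paper: smooth-locally embed $X$ in pure codimension into a smooth $\kk$-scheme with trivial logarithmic structure, apply Theorem~\ref{T:logarithmic-embedded-resolution}, and glue using the re-embedding principle (Theorem~\ref{T:re-embedding-principle}) together with functoriality. The paper's proof is more terse --- it reduces the gluing to the statement that two embeddings $X \hookrightarrow Y_1$, $X \hookrightarrow Y_2$ give the same resolution, handles the equidimensional case by the fact that such embeddings are smooth-locally isomorphic, and reduces the general case by repeated application of Theorem~\ref{T:re-embedding-principle} --- whereas you spell out the \v{C}ech-type descent and cocycle condition more explicitly. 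Your anticipated obstacle about iterating the re-embedding principle through the whole sequence is not a genuine issue: Theorem~\ref{T:re-embedding-principle}(ii) identifies $Y'$ with the proper transform of $Y$ inside $Y_1'$ and $X'$ with $X_1'$, so after one step you are again in the hypotheses of Theorem~\ref{T:re-embedding-principle} (now with $Y'$ and $Y_1'$ carrying the logarithmic structure from the exceptional divisors), and the argument proceeds by induction on the steps of the algorithm.
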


As a final note, while the steps in Theorem~\ref{T:logarithmic-embedded-resolution} are more explicit and efficient than previous resolution algorithms, we emphasize that if $X$ in Theorem~\ref{T:logarithmic-resolution} is a scheme, then $X'$ is usually not a scheme, and its good moduli space relative to $X$ is not necessarily smooth over $\kk$. 

Nevertheless, we expect Theorem~\ref{T:logarithmic-embedded-resolution} to suffice for many computations in algebraic geometry that necessitate logarithmic resolution. For example, a motivic change of variables formula for Artin stacks, that is applicable to the context of Theorem~\ref{T:logarithmic-resolution}, was very recently developed by Satriano--Usatine; see \cite{satriano-usatine-motivic-integration-Artin-stacks}.

If one still insists on returning to the world of schemes, one can apply ``canonical reduction of stabilizers'' due to Edidin--Rydh, see \cite{edidin-rydh-reduction-of-stabilizers}, followed by ``destackification'' due to Bergh--Rydh,  see \cite{bergh-rydh-destackification}, to the output $X'$ in Theorem~\ref{T:logarithmic-resolution} and its snc divisor $\Pi^{-1}(X \smallsetminus X^{\sm})$. In this way one recovers \cite[Main Theorem I]{hironaka-resolution}. For concrete statements, we refer the reader to Section~\ref{5.2}.

\subsection{Acknowledgements}

Dan Abramovich thanks the generosity of ICERM and its Combinatorial Algebraic Geometry program for its hospitality during Spring 2021, Institut Mittag-Leffler and its Moduli and Algebraic Cycles program where he was Visiting Professor during September--October 2021, the Einstein Institute of Mathematics at Jerusalem for its hospitality during November--December 2021, and the Simons Foundation for its generous Simons Fellowship through 2021. 

Ming Hao Quek thanks Institut Mittag-Leffler for accommodating him and for providing a conducive research environment during September--November 2021. He thanks John Christian Ottem, Dan Petersen, and David Rydh for organizing the Moduli and Algebraic Cycles program and is deeply grateful to them for giving him the opportunity to present a seminar talk on this paper.

We thank the referee for an insightful reading of our paper, and in particular for pointing out an overstatement of functoriality in an earlier version of this paper.

Last but not least, the authors express our deepest gratitude to David Rydh and Matthew Satriano, with whom we had the privilege and the pleasure of discussing mathematics, and whose questions and suggestions have influenced some of the approaches undertaken in this paper. 

\setcounter{tocdepth}{1}
\tableofcontents

\section{Multi-weighted blow-ups: Local aspects}\label{C:multi-weighted-blow-ups:local-aspects}

\subsection{Multi-weighted blow-ups on affine spaces}\label{2.1}

We first review the notion of fantastacks in \cite{geraschenko-satriano-toric-stacks-I}. In the process, we will also fix some notation.

\begin{definition}[Fantastacks]\label{D:fantastacks}
  Given a lattice $N$ with dual lattice $N^\vee = \Hom_\ZZ(N,\ZZ)$, let $\Sigma$ be a fan in $N$ and $\beta \colon \ZZ^\kappa \to N$ be a homomorphism with finite cokernel, satisfying the following conditions:
  \begin{enumerate}[label=(\alph*),ref=\alph*]
    \item\label{D:f-1} Every ray (\textit{i.e.} $1$-dimensional cone) of $\Sigma$ contains some $\beta(\be_i)$.
    \item\label{D:f-2} Every $\beta(\be_i)$ lies in the support of $\Sigma$.
\end{enumerate}
For a cone $\sigma \in \Sigma$, set $\widehat{\sigma}$ to be the cone in $\ZZ^\kappa$ spanned by those $\be_i$ such that $\beta(\be_i) \in \sigma$. Let $\widehat{\Sigma}$ denote the fan in $\ZZ^\kappa$ generated by $\lbrace \widehat{\sigma} \colon \sigma \in \Sigma \rbrace$. The \emph{fantastack} associated to $(\Sigma,\beta)$ is \[ 
    \sF_{\Sigma,\beta} := \left[X_{\widehat{\Sigma}} \q G_\beta \right], \quad \textrm{where } G_\beta := \Ker\left(\Gm^\kappa = T_{\ZZ^\kappa} \xrightarrow{T_\beta} T_N\right).
\]
In the above expression: \begin{enumerate}
    \item $X_{\widehat{\Sigma}}$ is the toric variety associated to the fan $\widehat{\Sigma}$ on $\ZZ^\kappa$;
    \item $T_N = \Hom_\GrpSch(N^\vee,\Gm)$ (resp.\ $T_{\ZZ^\kappa}$) is the torus of $N$ (resp.\ $\ZZ^\kappa$);
    \item $T_\beta$ is the homomorphism of tori induced by $\beta$;
    \item $G_\beta$ acts on $X_{\widehat{\Sigma}}$ as a subgroup of $\Gm^\kappa = T_{\ZZ^\kappa}$.
\end{enumerate}
\end{definition}

\begin{remark}\label{R:cox-construction}
By \cite[Section 5.1]{cox-toric-varieties}, we have \[
    X_{\widehat{\Sigma}} = \AA^\kappa \smallsetminus V(J_\Sigma) = \Spec(\kk[x_1,\dotsc,x_\kappa]) \smallsetminus V(J_\Sigma) = \bigcup_{\substack{\sigma \in \Sigma \\ \textrm{\tiny maximal cone}}}{U_\sigma}, 
\]
where $J_\Sigma = \bigl(x_\sigma := \prod_{\beta(\be_i) \notin \sigma}{x_i} \colon \sigma \in \Sigma$ maximal cone$\bigr)$ is the \emph{irrelevant ideal} and $U_\sigma := \Spec(\kk[x_1,\dotsc,x_\kappa][x_\sigma^{-1}])$ for each maximal cone $\sigma \in \Sigma$. Thus, $\sF_{\Sigma,\beta}$ admits a covering by principal open substacks $D_+(\sigma) := [U_\sigma \q G_\beta]$, as $\sigma$ varies over all maximal cones of $\sigma$. We call $D_+(\sigma)$ the \emph{$x_\sigma$-chart} of $\sF_{\Sigma,\beta}$ and sometimes denote it by $D_+(x_\sigma)$.
\end{remark}

Note that $\left(\widehat{\Sigma},\beta\right)$ is a \emph{stacky fan}, see \cite[Definition 2.4]{geraschenko-satriano-toric-stacks-I}, and $\sF_{\Sigma,\beta}$ is the toric stack associated to $\left(\widehat{\Sigma},\beta\right)$. However, more is true for fantastacks. 

\begin{remark}\label{R:fantastacks-properties}
By definition, $\widehat{\Sigma}$ is a smooth fan, and the torus of $\sF_{\Sigma,\beta}$ is $\Gm^\kappa/G_\beta$, which is isomorphic to $T_N$ via $T_\beta$. In other words, fantastacks are smooth toric Artin stacks with trivial generic stabilizer.
\end{remark}

\begin{remark}\label{R:good-moduli-space-fantastacks}
By definition, the morphism $\beta$ is compatible with the fans $\widehat{\Sigma}$ and $\Sigma$, and therefore induces a toric morphism $X_{\widehat{\Sigma}} \to X_\Sigma$, which descends to the good moduli space morphism $\sF_{\Sigma,\beta} \to X_\Sigma$; see \cite[Example 6.24]{geraschenko-satriano-toric-stacks-I}.
\end{remark}

A multi-weighted blow-up on an affine space is usually carried out along the vanishing locus of a monomial ideal. It is a fantastack $\sF_{\Sigma,\beta}$, where $(\Sigma,\beta)$ is associated to the monomial ideal and $\beta \colon \ZZ^\kappa \to N$ is in particular surjective, with kernel having a general simple description. Before giving the definition, we fix the following conventions for this paper. 

\begin{xpar}[Conventions]\label{X:conventions-2.1}
Let $N = \ZZ^n$, with standard basis vectors $\be_i$. Let $M = N^\vee$ be the dual lattice, with standard dual basis vectors $\be_i^\vee$. For any fan $\Sigma$ on $N$, let $\Sigma(1)$ denote the rays in $\Sigma$, and for a ray $\rho$ of $\Sigma$,  let $\bu_\rho = (\uu_{\rho,i})_{i=1}^n$ denote the first lattice point on $\rho$.

Let $\sigma_\std$ be the cone on $N = \ZZ^n$ spanned by the standard basis vectors $\be_i$, which generates the standard fan $\Sigma_\std$ in $N$, and whose associated affine toric variety is the affine space $\AA^n := \Spec(\kk[x_1,\dotsc,x_n])$. In addition, for a set $S$, we usually write $\AA^S$ for $\Spec(\kk[x_s \colon s \in S])$ or $\Spec(\kk[x_s' \colon s \in S])$.

In this paper, $\fa$ denotes a monomial ideal of $\kk[x_1,\dotsc,x_n]$. Associated to $\fa$ are the following notions:
\begin{enumerate}
    \item\label{conv1} the submonoid $\Upgamma_\fa = \lbrace \ba \in M \colon \pmb{x}^{\ba} := x_1^{a_1}\dotsm x_n^{a_n} \in \fa \rbrace$ of $M$,
    \item\label{conv2} the Newton polyhedron $P_\fa$ of $\fa$ given by the convex hull of $\Upgamma_\fa$ in $\RR^n_{\geq 0} \subset M \otimes_\ZZ \RR =: M_\RR$,
    \item\label{conv3} and the normal fan $\Sigma_\fa$ of $P_\fa$, which is a subdivision of $\Sigma_\std$ in $N$, and hence induces a toric, proper, birational morphism $X_{\Sigma_\fa} \to \AA^n$.
\end{enumerate}
For every integer $0 \leq k \leq n$, there is an inclusion-reversing correspondence between $k$-dimensional cones $\sigma$ of $\Sigma_\fa$ and $(n-k)$-dimensional faces $\tau$ of $P_\fa$. Therefore, we introduce the following notation: \begin{enumerate}[label=(\alph*),ref=\alph*]
     \item\label{not-a} $H_\rho$ is the facet of $P_\fa$ corresponding to a ray $\rho \in \Sigma_\fa(1)$. We also let $N_\rho(\fa)$ be the natural number such that the affine span of $H_\rho$ has  equation $\sum_{i=1}^n{\uu_{\rho,i} \cdot \be_i} {= N_{\rho}(\fa)}$.
     \item\label{not-b} $\bv_\sigma = (\uv_{\sigma,i})_{i=1}^n$ is the vertex of $P_\fa$ corresponding to a maximal cone $\sigma$ of $\Sigma_\fa$.
 \end{enumerate}
Finally, we identify the subset $\Sigma_{\std}(1) \subset \Sigma_\fa(1)$ with $[1,n] := \lbrace 1,\dotsc,n \rbrace$ and denote its complement by
\[
    \bE(\fa) := \Sigma_\fa(1) \smallsetminus \Sigma_{\std}(1) = \Sigma_\fa(1) \smallsetminus [1,n].
\]
We call the rays in $[1,n]$ the \emph{standard rays} and call the rays in $\bE(\fa)$ the \emph{exceptional rays}. We also set $\bE^+(\fa) := \lbrace \rho \in \Sigma_\fa(1) \colon N_\rho(\fa) > 0 \rbrace \supset \bE(\fa)$.
\end{xpar}

\begin{definition}[Multi-weighted blow-ups on affine spaces]\label{D:multi-weighted-blow-up}
Let $\bb = (\ub_\rho)_{\rho \in \bE(\fa)} \in \NN_{>0}^{\bE(\fa)}$. Then $\bb$ yields a surjective homomorphism $\beta \colon \ZZ^{\Sigma_\fa(1)} \to N = \ZZ^n$ as follows: \[
    \be_\rho \longmapsto \begin{cases}
        \bu_\rho = \be_i \quad &\textrm{if } \rho = i \in \Sigma_{\std}(1) = [1,n], \\
        \ub_\rho \cdot \bu_\rho \quad &\textrm{if } \rho \in \bE(\fa).
    \end{cases}
\]
Therefore, the data $(\Sigma_\fa,\beta)$ yields the fantastack $\Bl_{\fa,\bb}\AA^n :=\sF_{\Sigma_\fa,\beta}$, as described in Definition~\ref{D:fantastacks}.
Moreover, the homomorphism $\beta$ induces, as explained in Remark~\ref{R:fantastacks-properties} and Section~\ref{X:conventions-2.1}\eqref{conv3}, a toric morphism \[
    \pi_{\fa,\bb} \colon \Bl_{\fa,\bb}\AA^n \xrightarrow{\textrm{good moduli space}} X_{\Sigma_\fa} \lra \AA^n.
\]
We call this composition the \emph{multi-weighted blow-up} of $\AA^n$ along $\fa$ and $\bb$. When $\bb$ is the unit vector $(1,1,\dotsc,1)$ in $\NN_{>0}^{\bE(\fa)}$, we instead write the above expression as $\pi_\fa \colon \Bl_\fa\AA^n \to \AA^n$. This should not be confused with the \emph{schematic} blow-up of $\AA^n$ along $\fa$, which we denote by $\bl_\fa\AA^n$ in this paper. (In fact, the normalization of $\bl_\fa\AA^n$ is precisely $X_{\Sigma_\fa}$; see Remark~\ref{R:cms-equals-gms}.)
\end{definition}

One anticipates that $\Bl_\fa\AA^n$ is the ``most canonical'' multi-weighted blow-up of $\AA^n$ associated to $\fa$. We will make this canonicity precise in Section~\ref{3.1}, where, more generally, we also define, with the same goal of canonicity in mind, the multi-weighted blow-up $\Bl_{\fa_\bullet}\AA^n$ of $\AA^n$ along a monomial Rees algebra $\fa_\bullet$.

\begin{xpar}[Alternative notation]\label{X:conventions-2.1-b}
To uniformize forthcoming notation, we also set $\ub_i := 1$ for every $i \in [1,n] \subset \Sigma_\fa(1)$, and we interpret $\bb$ as a vector $(\ub_\rho)_{\rho \in \Sigma_\fa(1)}$ in $\NN_{>0}^{\Sigma_\fa(1)}$. Then $\beta$ in Definition~\ref{D:multi-weighted-blow-up} is simply $\be_\rho \mapsto \ub_\rho \cdot \bu_\rho$ for $\rho \in \Sigma_\fa(1)$.
\end{xpar}

\begin{remark}\label{R:multi-weighted-blow-up-properties}
The multi-weighted blow-up $\pi_{\fa,\bb}$ is birational, as explained in Remark~\ref{R:fantastacks-properties}. By \cite[Theorem~4.16]{alper-good-moduli-spaces}, the good moduli space morphism $\Bl_{\fa,\bb}\AA^n \to X_{\Sigma_\fa}$ is universally closed and surjective. Therefore, so is $\pi_{\fa,\bb}$.
\end{remark}

\begin{remark}\label{R:newton-polyhedron-well-defined}
It is important to highlight that we have fixed coordinates $x_1,\dotsc,x_n$ on $\AA^n$ in which $\fa$ is monomial; \textit{i.e.} we fixed a toroidal
Zariski logarithmic structure, with chart $\NN^n \xrightarrow{\be_i \mapsto x_i} \kk[x_1,\dotsc,x_n]$, in which $\fa$ is a monomial ideal in the general sense of Section~\ref{X:monomiality} below. 

Then the Newton polyhedron $P_\fa$ is \emph{well defined} under the lens of logarithmic geometry; \textit{i.e.}~$P_\fa$ is \emph{independent}, up to symmetry, of any change in local coordinates at $0 \in \AA^n$ which respects the aforementioned toroidal logarithmic structure. Explicitly, any such coordinate change corresponds to a monoid automorphism of $\NN^n \oplus \sO_{\AA^n,0}^\ast$, which must map each $(\be_i,1)$ to $(\be_j,\mu)$ for some $j \in [1,n]$ and $\mu \in \sO_{\AA^n,0}^\ast$.
\end{remark}

\begin{remark}\label{R:same-newton-polyhedron-for-two-monomial-ideals}
  Two monomial ideals can possess the same normal fan, and thus yield the same multi-weighted blow-up. For example:
\begin{enumerate}
    \item\label{R:snpftmi-1} $P_\fa = P_{\overline{\fa}}$, where $\overline{\fa} = \lbrace \pmb{x}^{\ba} \colon \ba \in P_\fa \rbrace$ is the integral closure of $\fa$.
    \item\label{R:snpftmi-2} If $f_1,\dotsc,f_r$ are monomials generating $\fa$, then for any $\ell \in \NN_{>0}$, the integral closures of $\fa^\ell$ and $(f_1^\ell,\dotsc,f_r^\ell)$ coincide, so by \eqref{R:snpftmi-1}, $\fa^\ell$ and $(f_1^\ell,\dotsc,f_r^\ell)$ have the same Newton polyhedra.
    \item\label{R:snpftmi-3} {For any $\ba \in \NN^n$, $\Sigma_{\pmb{x}^\ba \cdot \fa} = \Sigma_\fa$.} However, while the multi-weighted blow-up along $\pmb{x}^\ba \cdot \fa$ is the same as that along $\fa$, there is a subtle difference in their ``exceptional'' divisors (see Remark~\ref{R:total-transform-of-monomial-ideal}). 
    \item\label{R:snpftmi-4} Lastly, as $\ell$ varies, although the Newton polyhedron of $\fa^\ell$ varies, the normal fan of $\fa^\ell$ remains the same, and so does the multi-weighted blow-up of $\fa^\ell$. 
\end{enumerate}
\end{remark}

\begin{xpar}[Explicating multi-weighted blow-ups]\label{X:explicating-multi-weighted-blow-ups}
  With the notation  in Definition~\ref{D:multi-weighted-blow-up}, the homomorphism $\beta \colon \ZZ^{\Sigma_\fa(1)} \to N = \ZZ^n$ induced by $\bb$ fits into the short exact sequence
  \[
    0 \lra \ZZ^{\bE(\fa)} \xrightarrow{\alpha \ = \ \tiny \begin{bmatrix}\bB\\-I_k\end{bmatrix}} \ZZ^{\Sigma_\fa(1)} \xrightarrow{\footnotesize \beta \ = \ \begin{bmatrix}I_k&\bB\end{bmatrix}} \ZZ^n \lra 0, 
\]
where $k := \#\bE(\fa)$ and $\bB = (\uB_{i,\rho})_{1 \leq i \leq n,\; \rho \in \bE(\fa)}$ is the matrix whose columns are $\ub_\rho \cdot \bu_\rho$ for each $\rho \in \bE(\fa)$. Unravelling the definitions, let us highlight three details:
\begin{enumerate}
    \item\label{X:emwbu-1} We have the following commutative diagram: \[
    \begin{tikzcd}
    X_{\widehat{\Sigma}_\fa}= \AA^{\Sigma_\fa(1)} \smallsetminus V(J_{\Sigma_\fa}) \arrow[to=1-2, hookrightarrow, "\textrm{\tiny{open}}"] \arrow[to=2-1, twoheadrightarrow, swap, "\textrm{\tiny{stack-theoretic quotient}}"] & \AA^{\Sigma_\fa(1)}  \arrow[to=1-3] & \AA^n\rlap{,} \\
    \Bl_{\fa,\bb}\AA^n = \left[X_{\widehat{\Sigma}_\fa} \q \Gm^{\bE(\fa)}\right] \arrow[to=1-3, swap, bend right=15, "\pi_{\fa,\bb}"] & &
    \end{tikzcd}
\]
where we follow the convention in Section~\ref{X:conventions-2.1} that \begin{equation}\label{Eq:coordinates}
    \qquad \qquad \AA^{\Sigma_\fa(1)} = \Spec\left(\kk[x_1',\dotsc,x_n']\left[x_\rho' \colon \rho \in \bE(\fa)\right]\right)
\end{equation}
so that the above morphism $\AA^{\Sigma_\fa(1)} \to \AA^n$ of affine spaces corresponds to the homomorphism $\kk[x_1,\dotsc,x_n] \to \kk[x_1',\dotsc,x_n'][x_\rho' \colon \rho \in \bE(\fa)]$ defined by
\[
    \qquad \qquad x_i \longmapsto \left(x_i' \cdot \prod_{\rho \in \bE(\fa)}{\left(x_\rho'\right)^{\uB_{i,\rho}}}\right) = \left(x_i' \cdot \prod_{\rho \in \bE(\fa)}{\left(x_\rho'\right)^{\ub_\rho \cdot \uu_{\rho,i}}}\right)
\]
for every $1 \leq i \leq n$. For $\rho \in \bE(\fa)$, the corresponding coordinate $x_\rho'$ of $\AA^{\bE(\fa)}$ will be written as $u_\rho$ during examples (\textit{e.g.}~Sections~\ref{2.2} and~\ref{5.1}).
\item\label{X:emwbu-2} The action of $\Gm^{\bE(\fa)}$ on $X_{\widehat{\Sigma}_\fa} \subset \AA^{\Sigma_\fa(1)} = \Spec(\kk[x_1',\dotsc,x_n'][x_\rho' \colon \rho \in \bE(\fa)])$ can be interpreted from the matrix $\alpha = \small{\begin{bmatrix}\bB \\ -I_k\end{bmatrix}}$ as follows: \begin{enumerate}
    \item\label{X:emwbu-2a} For every $1 \leq i \leq n$, $x_i'$ has $\ZZ^{\bE(\fa)}$-grading $(\uB_{i,\rho})_{\rho \in \bE(\fa)} = (\ub_\rho \cdot \uu_{\rho,i})_{\rho \in \bE(\fa)}$.
    \item\label{X:emwbu-2b} For every $\rho \in \bE(\fa)$, $x_\rho'$ has $\ZZ^{\bE(\fa)}$-grading $-\be_{\rho} = (-\delta_{\rho,\widetilde{\rho}})_{\widetilde{\rho} \in \bE(\fa)}$. 
\end{enumerate}
\item\label{X:emwbu-3} By Definition~\ref{D:fantastacks}\eqref{D:f-1}, $\Bl_{\fa,\bb}\AA^n$ admits an open cover by \emph{$x_\sigma'$-charts} $D_+(\sigma) = D_+(x_\sigma') := \left[U_\sigma \q \Gm^{\bE(\fa)}\right]$, where $\sigma$ varies over all maximal cones $\sigma$ of $\Sigma_\fa$. In this setting, we have \[
    \qquad \qquad U_\sigma = \Spec\left(\kk\left[x_1',\dotsc,x_n'\right]\left[x_\rho' \colon \rho \in \bE(\fa)\right]\left[\left(x_\sigma'\right)^{-1}\right]\right), 
\]
where \[
    \qquad \qquad x_\sigma' = \prod_{\substack{\rho \in \Sigma_\fa(1) \\ \rho \not\subset \sigma}}{x_\rho'} = \prod_{\substack{\rho \in \Sigma_\fa(1) \\ \bv_\sigma \notin H_\rho}}{x_\rho'} = \prod_{\substack{\rho \in \Sigma_\fa(1) \\ \bu_\rho \cdot \bv_\sigma > N_\rho(\fa)}}{x_\rho'}
\]
(see Section~\ref{X:conventions-2.1}\eqref{not-a},~\eqref{not-b}).
\end{enumerate}
\end{xpar}

\begin{remark}\label{R:generalized-multi-weighted-blow-up}
Slightly more generally, one can also consider \emph{generalized multi-weighted blow-ups} $\Bl_{\fa,\widetilde{\bb}}\AA^n$ of $\AA^n$ along $\fa$ and $\widetilde{\bb} = (\ub_\rho)_{\rho \in \Sigma_\fa(1)} \in \NN_{>0}^{\Sigma_\fa(1)}$. Such a $\widetilde{\bb}$ yields a homomorphism $\widetilde{\beta} \colon \ZZ^{\Sigma_\fa(1)} \to N = \ZZ^n$ with finite cokernel, which sends $\be_\rho$ to $\ub_\rho \cdot \bu_\rho$ for every $\rho \in \Sigma_\fa(1)$. The fantastack $\sF_{\Sigma_\fa,\widetilde{\beta}}$ associated to $(\Sigma_\fa,\widetilde{\beta})$ is then denoted by $\Bl_{\fa,\widetilde{\bb}}\AA^n$. Likewise, there is a toric morphism $\pi_{\fa,\widetilde{\bb}} \colon \Bl_{\fa,\widetilde{\bb}}\AA^n \to X_\Sigma \to \AA^n$, which is the generalized multi-weighted blow-up morphism.

In the same way as in Section~\ref{X:explicating-multi-weighted-blow-ups}, one can partially explicate generalized multi-weighted blow-ups: we have \[
    \Bl_{\fa,\widetilde{\bb}}\AA^n = \left[\left(\AA^{\Sigma_\fa(1)} \smallsetminus V\left(J_{\Sigma_\fa}\right)\right) \q D\left(\Coker\left(\widetilde{\beta}^\vee\right)\right)\right] \xrightarrow{\pi_{\fa,\widetilde{\bb}}} \AA^n, 
\]
where $\pi_{\fa,\widetilde{\bb}}$ is induced by $\kk[x_1,\dotsc,x_n] \to \kk[x_1',\dotsc,x_n']\left[x_\rho' \colon \rho \in \bE(\fa)\right]$,  which is defined by 
\[
    x_i \longmapsto \left(\left(x_i'\right)^{\ub_i} \cdot \prod_{\rho \in \bE(\fa)}{\left(x_\rho'\right)^{\ub_\rho \cdot \uu_{\rho,i}}}\right)
\]
and where $D(\Coker(\widetilde{\beta}^\vee))$ acts via the morphism of diagonalizable groups obtained from $(\ZZ^{\Sigma_\fa(1)})^\vee \surj \Coker(\widetilde{\beta}^\vee)$ by applying $D(-) := \Hom_{\GrpSch}(-,\Gm)$. This demonstrates that generalized multi-weighted blow-ups can be seen as an amalgamation of the notion of multi-weighted blow-ups in Definition~\ref{D:multi-weighted-blow-up} and the notion of \emph{root stacks} {(see \cite[Definition 2.2.1]{Cadman-root-stacks} or \cite[Appendix B]{AGV}} or \cite[Example 2.2.4]{quek-rydh-weighted-blow-up}.)

Finally, $\Bl_{\fa,\widetilde{\bb}}\AA^n$ likewise admits an open cover by $x_\sigma'$-charts $D_+(\sigma) = \left[U_\sigma \q D\left(\Coker\left(\widetilde{\beta}^\vee\right)\right)\right]$, where $U_\sigma$ has the same description in Section~\ref{X:explicating-multi-weighted-blow-ups}.
\end{remark}

\subsection{Two examples}\label{2.2}

We illustrate 
Section~\ref{X:explicating-multi-weighted-blow-ups} via two examples. The first is classical and is a special case of multi-weighted blow-ups. 

\begin{example}[Weighted blow-ups]\label{EX:weighted-blow-ups}
For $w_1,\dotsc,w_n \in \NN_{>0}$, set $\ell := \lcm(w_1,\dotsc,w_n)$. The (stack-theoretic) weighted blow-up of $\AA^n = \Spec(\kk[x_1,$ $\dotsc,x_n])$ along the center $(x_1^{1/w_1},\dotsc,x_n^{1/w_n})$, in the sense of \cite{abramovich-temkin-wlodarczyk-weighted-resolution}, is simply the multi-weighted blow-up $\Bl_{\fa,\bb}\AA^n$, where $\fa = (x_1^{\ell/w_1},\dotsc,x_n^{\ell/w_n})$ and $\bb = \gcd(w_1,\dotsc,w_n) \in \NN_{>0} = \NN_{>0}^{\bE(\fa)}$. We demonstrate this via an explicit example, which supplies no less information than the general case.

Consider $\fa = (x^2,y^3,z^3) \subset \kk[x,y,z]$ and $\bb = \gcd(2,3,3) = 1$. We draw the Newton polyhedron $P_\fa$, which has four facets:
\[
	\begin{tikzpicture}[scale=0.7]
	
	\filldraw[lightgray] (0,0,4)--(3,0,0)--(0,3,0)--(0,0,4);
	
	\draw[->, thick] (3,0,0)--(4.5,0,0); 
	\draw[->, thick] (0,3,0)--(0,3.7,0); 
	\draw[->, thick] (0,0,4)--(0,0,7); 
	\draw[->, dashed, thick] (0,0,0)--(0,0,4); 
	\draw[<->, dashed, thick] (0,3,0)--(0,0,0)--(3,0,0); 
	
	\node at (4.95,-0.1,0) {$\be_y^\vee$};
	\node at (-0.5,3.7,0) {$\be_z^\vee$};
	\node at (-0.6,0,6.5) {$\be_x^\vee$};
	
	\foreach \x in {1,...,4} 
	\draw (\x,0.15,0) -- (\x,-0.15,0);
	
	\foreach \x in {1,...,3} 
	\draw (0.15,\x,0) -- (-0.15,\x,0);
		
	\foreach \x in {2,4,6} 
	\draw (0,0.15,\x) -- (0,-0.15,\x);
	
	
	\foreach \x in {1,...,4}
	\draw[dotted] (\x,0,0) -- (\x,3.7,0);
	\foreach \x in {1,...,3}
	\draw[dotted] (0,\x,0) -- (4.5,\x,0);
	
	\foreach \x in {1,...,4}
	\draw[dotted] (\x,0,0) -- (\x,0,7);
	\foreach \x in {2,4,6}
	\draw[dotted] (0,0,\x) -- (4.8,0,\x);
	
	\foreach \x in {2,4,6}
	\draw[dotted] (0,0,\x) -- (0,4,\x);
	\foreach \x in {1,...,3}
	\draw[dotted] (0,\x,0) -- (0,\x,7.3);
	
	\draw[ultra thick] (0,0,4)--(3,0,0)--(0,3,0)--(0,0,4);
	
	\filldraw[black] (0,0,4) circle (3pt);
	\filldraw[black] (3,0,0) circle (3pt);
	\filldraw[black] (0,3,0) circle (3pt);
	
	\end{tikzpicture} 
\]
Taking a cross-section of the normal fan $\Sigma_\fa$, we obtain \[
    \begin{tikzpicture}
    
    \filldraw[magenta] (1.2,1,1)--(2,2,0)--(2,0,2)--(1.2,1,1);
    \filldraw[cyan] (1.2,1,1)--(2,2,0)--(0,2,2)--(1.2,1,1);
    \filldraw[brown] (1.2,1,1)--(2,0,2)--(0,2,2)--(1.2,1,1);
    
    \draw[ultra thick] (0,2,2)--(2,0,2)--(2,2,0)--(0,2,2);
    \draw[ultra thick] (2,2,0)--(1.2,1,1)--(0,2,2);
    \draw[ultra thick] (2,0,2)--(1.2,1,1);
    
    \filldraw[black] (2,2,0) circle (2pt);
    \node at (2.2,2.2,-0.2) {$\be_x$};
    
    \filldraw[black] (2,0,2) circle (2pt);
    \node at (2.2,-0.3,2.2) {$\be_z$};
    
    \filldraw[black] (0,2,2) circle (2pt);
    \node at (-0.3,2.2,2.2) {$\be_y$};
    
    \filldraw[black] (1.2,1,1) circle (2pt);
    \node at (1.2,1.35,1.1) {$\bu$};
    
    \node at (1.2,1.9,1.6) {$\pmb{z'}$};
    \node at (1.2,1.1,1.8) {$\pmb{x'}$};
    \node at (1.75,1.1,1.1) {$\pmb{y'}$};
    
    \end{tikzpicture}
\]
where $\bu = (3,2,2)$ is the normal vector to the shaded facet of $P_\fa$ above. The vertices $(2,0,0)$, $(0,3,0)$, and $(0,0,3)$ of $P_\fa$ correspond, respectively, to the maximal cones of $\Sigma_\fa$ represented above by brown-, magenta-, and cyan-coloured triangles, which also correspond, respectively, to the $x'$-, $y'$-, and $z'$-charts on $\Bl_\fa\AA^3 = [X_{\widehat{\Sigma}_\fa} \q \Gm]$, where $X_{\widehat{\Sigma}_\fa} = \AA^4 \smallsetminus V(J_{\Sigma_\fa}) = \AA^4 \smallsetminus V(x',y',z')$. As explained in Section~\ref{X:explicating-multi-weighted-blow-ups}, the morphism $\pi_\fa \colon \Bl_\fa\AA^3 \to \AA^3$ can be read off the following matrix: \[
    \begin{bmatrix}I_3&\bu\end{bmatrix} = \begin{bmatrix}[ccc|c]
    1&0&0&3\\
    0&1&0&2\\
    0&0&1&2
    \end{bmatrix}\qquad \rightsquigarrow \qquad 
    \begin{cases}
        x = x'u^3, \\
        y = y'u^2, \\
        z = z'u^2,
    \end{cases}
\]
and the $\Gm$-action on $X_{\widehat{\Sigma}_\fa}$ can be read off from the matrix \[
    \begin{bmatrix}\bu \\ -1\end{bmatrix} = \begin{bmatrix}
    3\\2\\2\\ -1
    \end{bmatrix} \qquad \rightsquigarrow \qquad \begin{cases}
        x' \textrm{ has $\ZZ$-weight $3$,} \\
        y' \textrm{ has $\ZZ$-weight $2$,} \\
        z' \textrm{ has $\ZZ$-weight $2$,} \\
        u \textrm{ has $\ZZ$-weight $-1$.}
    \end{cases}
\]
This is precisely the description of the (stack-theoretic) weighted blow-up of $\AA^3$ along $(x^{1/3},y^{1/2},z^{1/2})$ in \cite{abramovich-temkin-wlodarczyk-weighted-resolution}.
\end{example}

\begin{example}[A new example]\label{EX:second-example}
The Newton polyhedron $P_\fa$ of the monomial ideal $\fa = (x^2,y^2z,z^3) \subset \kk[x,y,z]$ has five facets: \[
	\begin{tikzpicture}[scale=0.8]
	
	\filldraw[gray] (4.8,0,4)--(0,0,4)--(2,1,0)--(6.8,1,0);
	\filldraw[lightgray] (0,3,0)--(0,0,4)--(2,1,0)--(0,3,0);
	
	\draw[->, thick] (0,3,0)--(0,4.1,0); 
	\draw[->, thick] (0,0,4)--(0,0,7); 
	\draw[->, dashed] (0,0,0)--(0,0,4); 
	\draw[<->, dashed] (0,3,0)--(0,0,0)--(6,0,0); 
	
	\node at (6.4,-0.1,0) {$\be_y^\vee$};
	\node at (-0.5,4.1,0) {$\be_z^\vee$};
	\node at (-0.6,0,6.5) {$\be_x^\vee$};
	
	\foreach \x in {1,...,4} 
	\draw (\x,0.1,0) -- (\x,-0.1,0);
	
	\foreach \x in {1,...,3} 
	\draw (0.1,\x,0) -- (-0.1,\x,0);
		
	\foreach \x in {2,4,6} 
	\draw (0,0.1,\x) -- (0,-0.1,\x);
	
	
	\foreach \x in {1,...,4}
	\draw[dotted] (\x,0,0) -- (\x,4.1,0);
	\foreach \x in {1,...,3}
	\draw[dotted] (0,\x,0) -- (6,\x,0);
	
	\foreach \x in {1,...,4}
	\draw[dotted] (\x,0,0) -- (\x,0,7);
	\foreach \x in {2,4,6}
	\draw[dotted] (0,0,\x) -- (6,0,\x);
	
	\foreach \x in {2,4,6}
	\draw[dotted] (0,0,\x) -- (0,4.3,\x);
	\foreach \x in {1,...,3}
	\draw[dotted] (0,\x,0) -- (0,\x,7.3);
	
	\draw[ultra thick] (0,3,0)--(0,0,4)--(2,1,0)--(0,3,0)--(0,4,0);
	\draw[ultra thick] (2,1,0)--(6.8,1,0);
	\draw[ultra thick] (0,0,4)--(4.8,0,4);
	\draw[ultra thick] (0,0,4)--(0,0,6.8);
	\draw[dashed] (4.8,0,4)--(6.8,1,0);
	
	\filldraw[black] (0,0,4) circle (3pt);
	\filldraw[black] (2,1,0) circle (3pt);
	\filldraw[black] (0,3,0) circle (3pt);
	
	\end{tikzpicture} 
\]
We sketch a cross-section of the normal fan $\Sigma_\fa$: \[
    \begin{tikzpicture}[scale=0.8]
    
    \filldraw[magenta] (2,3,3.5)--(2.33,1,1.33)--(3,3,0);
    \filldraw[brown] (2.33,1,1.33)--(2,3,3.5)--(0,4,5)--(2,0,2);
    \filldraw[cyan] (3,3,0)--(2,3,3.5)--(0,4,5);
    
    \draw[ultra thick] (0,4,5)--(2,0,2)--(3,3,0)--(0,4,5);
    \draw[ultra thick] (2,3,3.5)--(2.33,1,1.33);
    \draw[ultra thick] (3,3,0)--(2,3,3.5)--(0,4,5);
    
    \filldraw[black] (3,3,0) circle (2.5pt);
    \node at (3.2,3.2,-0.2) {$\be_x$};
    
    \filldraw[black] (2,0,2) circle (2.5pt);
    \node at (2.2,-0.3,2.2) {$\be_z$};
    
    \filldraw[black] (0,4,5) circle (2.5pt);
    \node at (-0.3,4.2,5.2) {$\be_y$};
    
    \filldraw[black] (2,3,3.5) circle (2.5pt);
    \node at (2.6,3.4,5.4) {$\bu_1$};
    
    \filldraw[black] (2.33,1,1.33) circle (2.5pt);
    \node at (3.1,1.3,2.25) {$\bu_2$};
    
    \node at (2.6,2.6,2.3) {$\pmb{y'z'}$};
    \node at (3.5,3.4,7.3) {$\pmb{x'}$};
    \node at (2.6,4.25,5.4) {$\pmb{z'u_2}$};
    
    \end{tikzpicture}
\]
where $\bu_1 = (3,2,2)$ and $\bu_2 = (1,0,2)$ are the normal vectors to the shaded facets of $P_\fa$ above. The vertices $(2,0,0)$, $(0,2,1)$, and $(0,0,3)$ of $P_\fa$ correspond to the maximal cones of $\Sigma_\fa$ represented above by the brown-, magenta-, and cyan-coloured regions, and these also correspond to the $x'$-, $y'z'$-, and $z'u_1$-charts on $\Bl_\fa\AA^3 = [X_{\widehat{\Sigma}_\fa} \q \Gm^2]$, respectively, where $X_{\widehat{\Sigma}_\fa} = \AA^5 \smallsetminus V(J_{\Sigma_\fa}) = \AA^5\smallsetminus V(x',y'z',z'u_2)$. The morphism $\pi_\fa \colon \Bl_\fa\AA^3 \to \AA^3$ can be determined from the following matrix: \[
    \begin{bmatrix}
    I_3 & \bu_1 & \bu_2
    \end{bmatrix} = \begin{bmatrix}[ccc|cc]
    1&0&0&3&1\\
    0&1&0&2&0\\
    0&0&1&2&2
    \end{bmatrix} \qquad \rightsquigarrow \qquad \begin{cases}
        x = x'u_1^3u_2, \\
        y = y'u_1^2, \\
        z = z'u_1^2u_2^2,
    \end{cases}
\]
and the $\Gm^2$-action on $X_{\widehat{\Sigma}_\fa}$ can be determined from the matrix \[
    \begin{bmatrix}
    \bu_1 & \bu_2 \\
    -1 & 0 \\
    0 & -1 
    \end{bmatrix} = \begin{bmatrix}
    3&1\\
    2&0\\
    2&2\\
    -1&0\\
    0&-1
    \end{bmatrix} \qquad \rightsquigarrow \qquad \begin{cases}
        x' \textrm{ has $\ZZ^2$-weight $(3,1)$,} \\
        y' \textrm{ has $\ZZ^2$-weight $(2,0)$,} \\
        z' \textrm{ has $\ZZ^2$-weight $(2,2)$,} \\
        u_1 \textrm{ has $\ZZ^2$-weight $(-1,0)$,} \\
        u_2 \textrm{ has $\ZZ^2$-weight $(0,-1)$.}
    \end{cases}
\]
\end{example}

\subsection{Exceptional divisors and transforms}\label{2.3}

The presentation $\Bl_{\fa,\bb}\AA^n = \left[X_{\widehat{\Sigma}_\fa} \q \Gm^{\bE(\fa)}\right]$ induces an isomorphism
\[
    \Pic\left(\Bl_{\fa,\bb}\AA^n\right) \xrightarrow{\;\lowsimeq\;} \Pic^{\Gm^{\bE(\fa)}}\left(X_{\widehat{\Sigma}_\fa}\right), 
\]
where the right-hand side denotes the $\Gm^{\bE(\fa)}$-equivariant Picard group of $X_{\widehat{\Sigma}_\fa}$. In particular, for each $\bd \in \ZZ^{\bE(\fa)}$, there are \emph{tautological line bundles} $\sO(\bd) := \sO_{\Bl_{\fa,\bb}\AA^n}(\bd)$ on $\Bl_{\fa,\bb}\AA^n$ which correspond to the trivial line bundle $\sO_{X_{\widehat{\Sigma}_\fa}}$ on $X_{\widehat{\Sigma}_\fa}$ endowed with the $\Gm^{\bE(\fa)}$-linearization given by the ``$\bd$-shift''; \textit{i.e.} \[
     \pi_{\fa,\bb}^\ast\sO(\bd) = \sO_{X_{\widehat{\Sigma}_\fa}}(\bd) := \left(\widetilde{\kk\left[x_1',\dotsc,x_n'\right]\left[x_\rho' \colon \rho \in \bE(\fa)\right]}\right)(\bd)\ \Big|_{X_{\widehat{\Sigma}_\fa}}, 
\]
where the $\ZZ^{\bE(\fa)}$-grading of $x_i'$ and $x_\rho'$ on the right-hand side can be obtained by subtracting $\bd$ from the respective $\ZZ^{\bE(\fa)}$-gradings in Section~\ref{X:explicating-multi-weighted-blow-ups}\eqref{X:emwbu-2}.

\begin{xpar}[Exceptional divisors on a multi-weighted blow-up]\label{X:exceptional-divisors}
For every $\rho \in \bE(\fa)$, recall that $x_\rho'$ has $\ZZ^{\bE(\fa)}$-weight $-\be_\rho$ (see Section~\ref{X:explicating-multi-weighted-blow-ups}\eqref{X:emwbu-2b}), and hence there is an injection \[
    \sO(\be_\rho) \longinj \sO(\mathbf{0}) = \sO
\]
induced by multiplication by $x_\rho'$, which embeds $\sO(\be_\rho)$ as an ideal sheaf of $\sO$ cutting out the \emph{exceptional divisor} \[
    E_\rho := V\left(x_\rho'\right) \subset \Bl_{\fa,\bb}\AA^n.
\]
This justifies calling the rays $\rho \in \bE(\fa)$  exceptional rays in $\Sigma_\fa$ (see Section~\ref{X:conventions-2.1}).
\end{xpar}

\begin{remark}\label{R:root-stacks}
For $i=1,2$, let $\bb_i = (\ub_{i,\rho})_{\rho \in \bE(\fa)} \in \NN_{>0}^{\bE(\fa)}$ be such that for each $\rho \in \bE(\fa)$, \[
    \ub_{2,\rho} = c_\rho \cdot \ub_{1,\rho} \quad \textrm{for some $c_\rho \in \NN_{>0}$.}
\]
By the description in Section~\ref{X:explicating-multi-weighted-blow-ups}, observe that $\Bl_{\fa,\bb_2}\AA^n$ can be obtained from $\Bl_{\fa,\bb_1}\AA^n$ by iteratively taking $c_\rho$\textsuperscript{th} root stacks along each exceptional divisor $E_\rho = V(x_\rho')$ of $\Bl_{\fa,\bb_1}\AA^n$.
\end{remark}

The remainder of this section is devoted to the definition of transforms of ideals under a multi-weighted blow-up. We first make the following definition, which is part of the subsequent proposition.

\begin{definition}\label{D:newton-polyhedron-of-ideal}
The \emph{monomial saturation} of an ideal $I \subset \kk[x_1,\dotsc,x_n]$ is the monomial ideal $\fa_I$ generated by monomials appearing (with non-zero coefficient) in the elements of $I$. We then define the Newton polyhedron $P_I$ of $I$ to simply be the Newton polyhedron of $\fa_I$. 

For $\bu = (\uu_i)_{i=1}^n \in \NN^n$ and $m \in \NN$, we say that $P_I$ is \emph{bounded below} by the hyperplane $\sum_{i=1}^n{\uu_i \cdot \be_i} = m$ if for every $g = \sum_{\ba \in \NN^n}{c_\ba \cdot \pmb{x}^{\ba}} \in I$, we have $\bu \cdot \ba \geq m$ whenever $c_\ba \neq 0$.
\end{definition}

\begin{proposition}\label{P:total-transform-of-monomial-ideal}
  Set $\sO := \sO_{\Bl_{\fa,\bb}\AA^n}$. With the notation of Equation \eqref{Eq:coordinates}, we have the following statements: 
  \begin{enumerate}
    \item\label{P:ttomi-1} Let $I$ be an ideal of\, $\kk[x_1,\dotsc,x_n]$. For $(n_\rho)_{\rho \in \Sigma_\fa(1)} \in \NN^{\Sigma_\fa(1)}$, the ideal sheaf underlying the total transform $V(I) \times_{\AA^n,\pi_{\fa,\bb}} \Bl_{\fa,\bb}\AA^n$ satisfies the inclusion \[
        \qquad \qquad \pi_{\fa,\bb}^{-1}I \cdot \sO \subset \prod_{\rho \in \Sigma_\fa(1)}{\left(x_\rho'\right)^{\ub_\rho \cdot n_\rho}}
    \]
    if and only if for every $\rho \in \Sigma_\fa(1)$, the Newton polyhedron $P_I$ of\, $I$ is bounded below by the hyperplane $\sum_{i=1}^n{\uu_{\rho,i} \cdot \be_i} = n_\rho$.
    
    \item\label{P:ttomi-2} Moreover, the ideal sheaf underlying the total transform $V(\fa) \times_{\AA^n,\pi_{\fa,\bb}} \Bl_{\fa,\bb}\AA^n \subset \Bl_{\fa,\bb}\AA^n$ satisfies the equality \begin{equation}\label{EX:total-transform-of-monomial-ideal}
        \qquad \qquad \pi_{\fa,\bb}^{-1}\fa \cdot \sO = \prod_{\rho \in \bE^+(\fa)}{\left(x_\rho'\right)^{\ub_\rho \cdot N_\rho(\fa)}}; 
    \end{equation}
    see Section~\ref{X:conventions-2.1} for the definitions of\, $\bE^+(\fa)$, $N_i(\fa)$, and $N_\rho(\fa)$.
\end{enumerate}
\end{proposition}

Before giving the proof, let us illustrate it by revisiting Example~\ref{EX:second-example}. 

\begin{example}\label{EX:total-transform}
Let $\fa = (x^2,y^2z,z^3)$, and consider the multi-weighted blow-up $\pi_\fa \colon \Bl_\fa\AA^3 \to \AA^3$. Using the equations in Example~\ref{EX:second-example}, one computes the following: \begin{enumerate}
    \item Let $I = (x^2+y^2+z^2) \subset \kk[x,y,z]$. Its total transform $\pi_\fa^{-1}I \cdot \sO_{\Bl_\fa\AA^3}$ is $(x'^2u_1^6u_2^2+y'^2u_1^4+z'^2u_1^4u_2^4)$, which is contained in $(u_1^4)$ but is contained neither in $(u_1^i)$ for $i \geq 5$, nor in $(u_2^j)$ for $j \geq 1$. This agrees with Proposition~\ref{P:total-transform-of-monomial-ideal}\eqref{P:ttomi-1}: the Newton polyhedron of $I$ has vertices $\bv_1 = (2,0,0)$, $\bv_2 = (0,2,0)$, and $\bv_3 = (0,0,2)$, and we have $\min_{1 \leq i \leq 3}{\bu_1 \cdot \bv_i} = 4$ and $\min_{1 \leq i \leq 3}{\bu_2 \cdot \bv_i} = 0$.
    \item The total transform $\pi_\fa^{-1}\fa \cdot \sO_{\Bl_\fa\AA^3}$ of $\fa$ equals \[
        \qquad \qquad (x'^2u_1^6u_2^2,y'^2z'u_1^6u_2^2,z'^3u_1^6u_2^6) = (u_1^6u_2^2) \cdot (x'^2,y'^2z',z'^3).
    \]
    The ideal $(x'^2,y'^2z',z'^3)$ is the unit ideal on each chart of $\Bl_\fa\AA^3$, so in fact, $\pi_\fa^{-1}\fa \cdot \sO_{\Bl_\fa\AA^3} = (u_1^6u_2^2)$. This agrees with Proposition~\ref{P:total-transform-of-monomial-ideal}\eqref{P:ttomi-2}: note that $N_{\be_i}(\fa) = 0$ for $1 \leq i \leq 3$, $N_{\bu_1}(\fa) = 6$, and $N_{\bu_2}(\fa) = 2$.
\end{enumerate}
\end{example}

\begin{proof}[Proof of Proposition~\ref{P:total-transform-of-monomial-ideal}]
It suffices to compute on the smooth cover $X_{\widehat{\Sigma}_\fa}$ of $\Bl_{\fa,\bb}\AA^n$. By replacing $I$ in \eqref{P:ttomi-1} by its monomial saturation, we may assume $I$ is monomial. Recall from Section~\ref{X:explicating-multi-weighted-blow-ups}\eqref{X:emwbu-1} that for every monomial $\pmb{x}^{\ba} \in I$, we have \begin{equation}\label{E:monomial-transform}
    \pmb{x}^{\ba} = \prod_{i=1}^n{\left(x_i'\right)^{a_i}} \cdot \prod_{\rho \in \bE(\fa)}{\left(x_\rho'\right)^{\ub_\rho \cdot (\bu_\rho \cdot \ba)}} \quad \textrm{in $\pi_{\fa,\bb}^{-1}I \cdot \sO$}.
\end{equation}
Then part \eqref{P:ttomi-1} follows from \eqref{E:monomial-transform} since it says that for every $\rho \in \bE(\fa)$ (resp.\ $i \in [1,n]$) and $\ba \in P_I$, we have $\bu_\rho \cdot \ba \geq n_\rho$ (resp.\ $a_i \geq n_i$) if and only if $(x_\rho')^{\ub_\rho \cdot n_\rho}$ divides $\pmb{x}^{\ba}$ (resp.\ $(x_i')^{n_i}$ divides $\pmb{x}^\ba$).

The forward inclusion in \eqref{P:ttomi-2}  follows from \eqref{P:ttomi-1}. For the reverse inclusion, it suffices to compute locally on the open charts $U_\sigma \subset X_{\widehat{\Sigma}_\fa}$ as $\sigma$ varies over all maximal cones of $\Sigma_\fa$. Therefore, fix a maximal cone $\sigma$ of $\Sigma_\fa$, and, as in Section~\ref{X:conventions-2.1}\eqref{not-b}, let $\bv_\sigma = (\uv_{\sigma,i})_{i=1}^n$ be the corresponding vertex of $P_\fa$. 
Setting $\ba = \bv_\sigma$ in \eqref{E:monomial-transform}, we have \[
    \pmb{x}^{\bv_\sigma} = \prod_{i=1}^n{\left(x_i'\right)^{\uv_{\sigma,i}}} \cdot \prod_{\rho \in \bE(\fa)}{\left(x_\rho'\right)^{\ub_\rho \cdot (\bu_\rho \cdot \bv_\sigma)}} \quad \textrm{in $\pi_{\fa,\bb}^{-1}\fa \cdot \sO$}.
\]
Recalling that $x_\rho'$ is invertible on $U_\sigma$ for any $\rho \in \Sigma_\fa(1)$ such that $\rho \not\subset \sigma$, we obtain \[
    \pi^{-1}_{\fa,\bb}\fa \cdot \sO \supset \left(\pmb{x}^{\bv_\sigma}\right) = \prod_{\substack{i \in [1,n] \\ \RR_{\geq 0} \cdot \be_i \subset \sigma}}{\left(x_i'\right)^{\uv_{\sigma,i}}} \cdot \prod_{\substack{\rho \in \bE(\fa) \\ \rho \subset \sigma}}{\left(x_\rho'\right)^{\ub_\rho \cdot (\bu_\rho \cdot \bv_\sigma)}} \quad \textrm{on $U_\sigma$}. 
\]
To complete the proof of \eqref{P:ttomi-2}, it remains to note the following. For each $\rho \in {\bE(\fa)}$ with $\rho \subset \sigma$, we have $\bv_\sigma \in H_\rho$, which implies $\bu_\rho \cdot \bv_\sigma = N_\rho(\fa)$. Likewise, if $\rho = i \in [1,n]$ with $\RR_{\geq 0} \cdot \be_i \subset \sigma$, we have $\uv_{\sigma,i} = N_i(\fa)$.
\end{proof}

\begin{remark}\label{R:total-transform-of-monomial-ideal}
In Proposition~\ref{P:total-transform-of-monomial-ideal}\eqref{P:ttomi-2}, note that for every $i \in [1,n]$, $N_i(\fa) > 0$ if and only if $\fa \subset (x_i)$, \textit{i.e.}~$V(\fa) \supset V(x_i)$. Analogously to how the schematic blow-up of $\AA^n$ along a divisor $D$ does nothing except declare $D$ to be ``exceptional'', the multi-weighted blow-up $\pi_{\fa,\bb}$ similarly declares the divisor $V(x_i') \subset \Bl_{\fa,\bb}\AA^n$ to be ``exceptional'' for every $i \in [1,n]$ with $\fa \subset (x_i)$. In this sense, Proposition~\ref{P:total-transform-of-monomial-ideal}\eqref{P:ttomi-2} expresses the total transform as a union of ``exceptional'' divisors $V(x_\rho')$ for $\rho \in \bE^+(\fa)$ (see Section~\ref{X:conventions-2.1}), with multiplicities.
\end{remark}

Next, we explicate two classical transforms for multi-weighted blow-ups. 

\begin{definition}[Proper transform]\label{D:proper-transform}
Set $\sO := \sO_{\Bl_{\fa,\bb}\AA^n}$. The \emph{proper} (\emph{or strict}\,) \emph{transform} of an ideal $I \subset \kk[x_1,\dotsc,x_n]$ under the multi-weighted blow-up $\pi_{\fa,\bb} \colon \Bl_{\fa,\bb}\AA^n \to \AA^n$ is \[
    \text{\small$\widetilde{\pi^{-1}_{\fa,\bb}I \cdot \sO} := \left(\pi^{-1}_{\fa,\bb}I \cdot \sO : \prod_{\rho \in \bE^+(\fa)}\left(x_\rho'\right)^\infty \right) :=  \bigcup_{(n_\rho) \in \NN^{\bE^+(\fa)}}{\left(\pi^{-1}_{\fa,\bb}I \cdot \sO : \prod_{\rho \in \bE^+(\fa)}\left(x_\rho'\right)^{n_\rho}\right)}.$}
\]
Equivalently, by Proposition~\ref{P:total-transform-of-monomial-ideal}\eqref{P:ttomi-2}, the \emph{proper transform} of the closed subscheme $V(I) \subset \AA^n$ under $\pi_{\fa,\bb}$ is the schematic closure of $V(\pi^{-1}_{\fa,\bb}I \cdot \sO) \smallsetminus V(\pi^{-1}_{\fa,\bb}\fa \cdot \sO)$ in $\Bl_{\fa,\bb}\AA^n$.
\end{definition}

\begin{definition}[Weak transform]\label{D:weak-transform}
Set $\sO := \sO_{\Bl_{\fa,\bb}\AA^n}$. The \emph{weak} (\emph{or birational, or controlled}\,) \emph{transform} of an ideal $I \subset \kk[x_1,\dotsc,x_n]$ under the multi-weighted blow-up $\pi_{\fa,\bb} \colon \Bl_{\fa,\bb}\AA^n \to \AA^n$ is \[
    \left(\pi_{\fa,\bb}\right)_\ast^{-1}I := \left(\pi^{-1}_{\fa,\bb}I \cdot \sO\right) \cdot \prod_{\rho \in \bE^+(\fa)}{\left(x_\rho'\right)^{-\ub_\rho \cdot N_\rho(I)}},  
\]
where for each $\rho \in \bE^+(\fa)$, $N_\rho(I)$ is the largest natural number $n_\rho$ such that: \begin{enumerate}
    \item the fractional ideal $(\pi_{\fa,\bb}^{-1}I \cdot \sO) \cdot (x_\rho')^{-\ub_\rho \cdot n_\rho}$ is an ideal on $\Bl_{\fa,\bb}\AA^n$; 
    \item or, equivalently by Proposition~\ref{P:total-transform-of-monomial-ideal}\eqref{P:ttomi-1}, the Newton polyhedron $P_I$ of $I$ is bounded below by the hyperplane $\sum_{i=1}^n{\uu_{\rho,i} \cdot \be_i} = n_\rho$.
\end{enumerate}
\end{definition}

\begin{remark}\label{R:transforms-properties}
By definition, we always have the inclusion \[
     \left(\pi_{\fa,\bb}\right)_\ast^{-1}I \subset \widetilde{\pi^{-1}_{\fa,\bb}I \cdot \sO}
\]
with equality if $I$ is a principal ideal. Moreover, if $I$ is radical, so is the proper transform $\widetilde{\pi_{\fa,\bb}^{-1}I \cdot \sO}$. In other words, if $V(I) \subset \AA^n$ is reduced, so is its proper transform in $\Bl_{\fa,\bb}\AA^n$.
\end{remark}

It is usually a more intricate issue to identify generators of proper transforms, as opposed to generators for weak transforms. 

\begin{example}\label{EX:weak-and-proper-transforms}
 Consider the non-principal ideal $I = (x^2+y^2,z-y^2) \subset \kk[x,y,z]$ under the multi-weighted blow-up $\pi_\fa$ in Example~\ref{EX:second-example}. \begin{enumerate}
 \item Its total transform $\pi_\fa^{-1}I \cdot \sO_{\Bl_\fa\AA^3}$ is
   \[\left(x'^2u_1^6u_2^2+y'^2u_1^4,z'u_1^2u_2^2-y'^2u_1^4\right) = \left(u_1^2\right) \cdot \left(x'^2u_1^4u_2^2+y'^2u_1^2,z'u_2^2-y'^2u_1^2\right).\]
   Hence, the weak transform of $I$ under $\pi_\fa$ is $(x'^2u_1^4u_2^2+y'^2u_1^2,z'u_2^2-y'^2u_1^2)$.
    
    \item On the other hand, while we have \[
        \qquad \qquad x^2+y^2 = u_1^4 \cdot \left(x'u_1^2u_2^2+y'^2\right) \quad \text{and} \quad z-y^2 = u_1^2 \cdot \left(z'u_2^2-y'^2u_1^2\right),
    \]
    the proper transform $\widetilde{\pi_\fa^{-1}I \cdot \sO_{\Bl_\fa\AA^3}}$ is not generated by the elements $x'u_1^2u_2^2+y'^2$ and $z'u_2^2-y'^2u_1^2$. Indeed, note that $x^2+z = (x^2+y^2)+(z-y^2) \in I$ and \[
        \qquad \qquad x^2+z = x'^2u_1^6u_2^2+z'u_1^2u_2^2 = u_1^2u_2^2 \cdot \left(x'^2u_1^4+z'\right).
    \]
    Thus, $x'^2u_1^4+z' \in \widetilde{\pi_\fa^{-1}I \cdot \sO_{\Bl_\fa\AA^3}}$, but $x'^2u_1^4+z' \notin (x'u_1^2u_2^2+y'^2,z'u_2^2-y'^2u_1^2)$.
\end{enumerate} 
\end{example}

\subsection{Multi-graded Rees algebras and idealistic exponents}\label{2.4}

In this section we reinterpret some of the earlier observations and definitions in terms of Rees algebras and idealistic exponents.

\begin{xpar}[Multi-graded Rees algebras]\label{X:multi-graded-rees-algebras}
  The discussion in Section~\ref{X:explicating-multi-weighted-blow-ups} can be summarized by the compact, but notation-heavy, statement that $\Bl_{\fa,\bb}\AA^n$ equals
  \begin{equation}\label{E:presentation-of-multi-weighted-blow-up}
    \left[\left(\GSpec_{\AA^n}(\sR) \smallsetminus V\left(J_{\Sigma_\fa}\right)\right) \q_{\tiny{\begin{bmatrix}
    \bB \\ -I_k
    \end{bmatrix}}} \; \Gm^{\bE(\fa)}\right], 
\end{equation}
where \begin{equation}\label{E:multi-graded-rees-algebra}
    \sR = \sR(\fa,\bb) := \frac{\sO_{\AA^n}[x_1',\dotsc,x_n']\left[x_\rho' \colon \rho \in \bE(\fa)\right]}{\left(x_i' \cdot \prod_{\rho \in \bE(\fa)}{\left(x_\rho'\right)^{\ub_\rho \cdot \uu_{\rho,i}}} - x_i \colon 1 \leq i \leq n\right)}
\end{equation}
and the matrix $\small{\begin{bmatrix}\bB \\ -I_k\end{bmatrix}}$ records the $\ZZ^{\bE(\fa)}$-grading of each $x_i'$ and each $x_\rho'$ as in Section~\ref{X:explicating-multi-weighted-blow-ups}\eqref{X:emwbu-2} and hence describes the $\Gm^{\bE(\fa)}$-action displayed above.

We provide a reinterpretation of the $\ZZ^{\bE(\fa)}$-graded $\sO_{\AA^n}$-algebra $\sR = \sR(\fa,\bb)$ as a \emph{multi-graded Rees algebra} on $\AA^n$. Consider the homomorphism of $\ZZ^{\bE(\fa)}$-graded $\sO_{\AA^n}$-algebras $\sR \to \sO_{\AA^n}[t_\rho^{\pm} \colon \rho \in \bE(\fa)]$ defined by \begin{align}\label{E:substitution}
    \begin{split}
    x_i' &\longmapsto \left(x_i \cdot \prod_{\rho \in \bE(\fa)}{t_\rho^{\ub_\rho \cdot \uu_{\rho,i}}}\right) \quad \textrm{for } 1 \leq i \leq n, \\
    x_\rho' &\longmapsto \left(1 \cdot t_\rho^{-1}\right) \quad \qquad \qquad \; \textrm{for } \rho \in \bE(\fa).
    \end{split}
\end{align}
This is an isomorphism of $\sR$ onto its image \begin{align}\label{E:multi-graded-rees-algebra-2}\begin{split}
    \sR_\bullet &:= \sO_{\AA^n}\left[t_\rho^{-1} \colon \rho \in \bE(\fa)\right]\left[x_i \cdot \prod_{\rho \in \bE(\fa)}{t_\rho^{\ub_\rho \cdot \uu_{\rho,i}}} \colon 1 \leq i \leq n\right] \\
    &\subset \; \sO_{\AA^n}\left[t_\rho^{\pm} \colon \rho \in \bE(\fa)\right].
\end{split}
\end{align}
The image $\sR_\bullet$ is a \emph{$\ZZ^{\bE(\fa)}$-graded Rees algebra} on $\AA^n$; \textit{i.e.} it is a finitely generated, quasi-coherent $\ZZ^{\bE(\fa)}$-graded $\sO_{\AA^n}$-subalgebra \[
    \sR_\bullet = \bigoplus_{\bm \in \ZZ^{\bE(\fa)}}{\sR_{\bm}} \; \subset \; \sO_{\AA^n}\left[t_\rho^{\pm} \colon \rho \in \bE(\fa)\right]
\]
satisfying the following three conditions:
\begin{enumerate}
    \item\label{cond1} $\sR_{\mathbf{0}} = \sO_{\AA^n}$.
    \item\label{cond2} $1 \cdot t_\rho^{-1} \in \sR_\bullet$ for all $\rho \in \bE(\fa)$.
    \item\label{cond3} For every $\bm \in \ZZ^{\bE(\fa)}$, we have $\sR_\bm = \bigcap_{\rho \in \bE(\fa)}{\sR_{m_\rho \cdot \be_\rho}}$.
\end{enumerate}
Note that under \eqref{cond1}, condition~\eqref{cond2} is equivalent to
\begin{enumerate}
    \myitem[ii$'$]\label{cond2'} $\sR_{\bm+\be_\rho} \subset \sR_\bm$ for every $\bm \in \ZZ^{\bE(\fa)}$ and $\rho \in \bE(\fa)$.
\end{enumerate}
In particular, \eqref{cond2} already implies the forward inclusion in \eqref{cond3}. Moreover, note that \eqref{cond3} is redundant if $\#\bE(\fa) = 1$.

We usually make the identification $\sR = \sR_\bullet$ and hence will not make any distinction between both sides of \eqref{E:substitution}. Occasionally, we neglect the negative degrees and only work with the $\NN^{\bE(\fa)}$-graded part of $\sR_\bullet$, which is an \emph{$\NN^{\bE(\fa)}$-graded Rees algebra}\footnote{In other words, it is a finitely generated, quasi-coherent $\NN^{\bE(\fa)}$-graded $\sO_{\AA^n}$-subalgebra $\bigoplus_{\bm \in \NN^{\bE(\fa)}}{\sR_\bm} \subset \sO_{\AA^n}[t_\rho \colon \rho \in \bE(\fa)]$ satisfying \eqref{cond1}, \eqref{cond2'}, and \eqref{cond3}, where the phrase ``$\bm \in \ZZ^{\bE(\fa)}$'' in the last two conditions is replaced by ``$\bm \in \NN^{\bE(\fa)}$''.} on $\AA^n$.
\end{xpar}

\begin{remark}[Alternative description of the proper transform]\label{R:alternative-definition-proper-transform}
It is also under the interpretation in Section~\ref{X:multi-graded-rees-algebras} that the proper transform of a closed subscheme $V(I) \subset \AA^n$ under $\pi_{\fa,\bb} \colon \Bl_{\fa,\bb}\AA^n \to \AA^n$ has a natural description. Namely, it is given by the similar-looking expression \[
    \left[\left(\GSpec_{V(I)}\left(\sR|_{V(I)}\right) \smallsetminus V\left(J_{\Sigma_\fa}\right)\right) \q_{\tiny{\begin{bmatrix}
    \bB \\ -I_k
    \end{bmatrix}}} \; \Gm^{\bE(\fa)}\right].
\]
If one interprets this as the ``\emph{multi-weighted blow-up of\, $V(I)$ along $\sR|_{V(I)}$}'', this description parallels that in \cite[Corollary II.7.15]{hartshorne-algebraic-geometry}.
\end{remark}

For Section~\ref{X:idealistic-exponents} below only, let $Y$ denote a $\kk$-variety; \textit{e.g.}~$Y = \AA^n$. In \cite[Section 2.2]{quek-weighted-log-resolution} one defines a one-to-one correspondence between non-zero, integrally closed, $\NN$-graded Rees algebras on $Y$ and \emph{idealistic exponents}, see \cite[Definition 2.7]{quek-weighted-log-resolution}, over $Y$. This can be immediately promoted to a one-to-one correspondence between non-zero, integrally closed, $\NN^k$-graded Rees algebras $\sR_\bullet$ on $Y$ and $k$-tuples $\gamma = (\gamma^{[\rho]} \colon \rho \in [1,k])$ of idealistic exponents over $Y$.

\begin{xpar}[Tuples of idealistic exponents]\label{X:idealistic-exponents}
  Let $\ZR(Y)$ denote the Zariski--Riemann space of $Y$, and let $\pi_Y \colon \ZR(Y) \to Y$ denote the morphism of locally ringed spaces which maps $\nu \in \ZR(Y)$ to the center $y_\nu$ of $\nu$ on $Y$; see \cite[Appendix A]{quek-weighted-log-resolution}. Then:
  \begin{enumerate}[label=(\alph*),ref=\alph*]
    \item\label{X:ie-1} Given a $k$-tuple $\gamma := (\gamma^{[\rho]} \colon \rho \in [1,k])$ of idealistic exponents over $Y$, let $\sR_\bullet^{[\rho]}$ be the integrally closed, $\NN$-graded Rees algebra on $Y$ associated to each $\gamma^{[\rho]}$. Then the integrally closed $\NN^k$-graded Rees algebra $\sR_\bullet := \bigoplus_{\bm \in \NN^k}{\sR_\bm \cdot \pmb{t}^{\bm}}$ on $Y$ associated to $\gamma$ is defined by $\sR_\bm := \bigcap_{\rho \in [1,k]}{\sR^{[\rho]}_{m_\rho}}$ for every $\bm \in \NN^k$. In other words, for any open $U \subset Y$, \[
        \qquad \qquad \sR_\bm(U) := \left\lbrace g \in \sO_Y(U) \colon \parbox{5 cm}{$\nu(g) \geq m_\rho \cdot \left(\gamma^{[\rho]}\right)_\nu$ for every $\nu \in \pi_Y^{-1}(U)$ and $\rho \in [1,k]$} \right\rbrace.
    \]
    \item\label{X:ie-2} Conversely, to a non-zero, integrally closed, $\NN^k$-graded Rees algebra $\sR_\bullet$ on $Y$, we associate a $k$-tuple $\gamma := (\gamma^{[\rho]} \colon \rho \in [1,k])$ of idealistic exponents over $Y$, where each $\gamma^{[\rho]}$ is the idealistic exponent over $Y$ associated to the non-zero, integrally closed, $\NN$-graded Rees algebra $\sR^{[\rho]}_\bullet := \bigoplus_{m \in \NN}{\sR_{m \cdot \be_\rho} \cdot t^m}$. In other words, the stalk of $\gamma^{[\rho]}$ at each $\nu \in \ZR(Y)$ is \[
      \qquad \qquad (\gamma^{[\rho]})_\nu := \min\left\lbrace \frac{1}{m_\rho} \cdot \nu(g) \colon 0 \neq g \cdot \pmb{t}^{\bm} \in \left(\sR_\bullet\right)_{y_\nu} \textrm{ with } m_\rho \geq 1 \right\rbrace.
    \]
\end{enumerate}
Together, \eqref{X:ie-1} and \eqref{X:ie-2} give the desired one-to-one correspondence.
\end{xpar}

\begin{xpar}\label{X:associated-idealistic-exponents}
Under the above one-to-one correspondence, the $\NN^{\bE(\fa)}$-graded part of $\sR(\fa,\bb)_\bullet$ in \eqref{E:multi-graded-rees-algebra-2} then corresponds to the tuple $\gamma(\fa,\bb) := \left(\gamma(\fa,\bb)^{[\rho]} \colon \rho \in \bE(\fa)\right)$ of $\#\bE(\fa)$ idealistic exponents over $\AA^n$, where each $\gamma(\fa,\bb)^{[\rho]}$ is defined stalk-wise at each $\nu \in \ZR(\AA^n)$ by \begin{equation}\label{E:associated-idealistic-exponents}
    \left(\gamma(\fa,\bb)^{[\rho]}\right)_\nu := \min_{\substack{i \in [1,n] \\ \uu_{\rho,i} \neq 0}}\left(\frac{1}{\ub_\rho \cdot \uu_{\rho,i}} \cdot \nu(x_i)\right).
\end{equation}
Following \cite[Notation 2.12]{quek-weighted-log-resolution}, we use the suggestive notation \[
    \left(x_i^{\frac{1}{\ub_\rho \cdot \uu_{\rho,i}}} \colon i \in [1,n],\ \uu_{\rho,i} \neq 0\right)
\]
to denote, for each $\rho \in [1,k]$, the corresponding integrally closed, $\NN$-graded Rees algebra $\sR(\fa,\bb)^{[\rho]}$.
\end{xpar}

\hfill

Our next objective is to give a coordinate-free interpretation of the weak transform (see Definition~\ref{D:weak-transform}) in terms of the idealistic exponents in $\gamma(\fa,\bb)$.

\begin{xpar}[Conventions]\label{X:conventions-2.4}
For the remainder of this section, fix a monomial ideal $\fa$ on $\AA^n$ and $\bb \in \NN_{>0}^{\bE(\fa)}$. Let $\widetilde{\pi} := \widetilde{\pi}_{\fa,\bb}$ denote the composition \[
    X_{\widehat{\Sigma}_\fa} \xtwoheadrightarrow[\textrm{quotient}]{\textrm{stack-theoretic}} \Bl_{\fa,\bb}\AA^n \xrightarrow{\pi_{\fa,\bb}} \AA^n.
\]
For an ideal $I$ on $\AA^n$ (or $X_{\widehat{\Sigma}_\fa}$), let $\gamma_I$ denote the idealistic exponent over $\AA^n$ (or $X_{\widehat{\Sigma}_\fa}$) associated to $I$; see \cite[Sections 2.1--2.2]{quek-weighted-log-resolution}. Let $\widetilde{\pi}^{-1}\gamma \cdot \sO$ denote the pull-back of $\gamma$ to $X_{\widehat{\Sigma}_\fa}$ via $\widetilde{\pi}$. Unless otherwise mentioned, set $\sO := \sO_{X_{\Sigma_\fa}}$, and set $\gamma^{[\rho]} := \gamma(\fa,\bb)^{[\rho]}$ for $\rho \in \bE(\fa)$. For $i \in [1,n]$, also set $\gamma^{[i]} := \gamma_{(x_i)}$.
\end{xpar}

\hfill

To reinterpret the weak transform, we begin with the following elementary lemma. 

\begin{lemma}\label{L:inclusion-vs-inequality}
  For $(k_\rho)_{\rho \in \Sigma_\fa(1)} \in \NN^{\Sigma_\fa(1)}$, the following statements are equivalent:
  \begin{enumerate}
    \item\label{L:ivi-1} $\widetilde{\pi}^{-1}I \cdot \sO \subset \prod_{\rho \in \Sigma_\fa(1)}{(x_\rho')^{k_\rho}}$.
    \item\label{L:ivi-2} $\widetilde{\pi}^{-1}\gamma_I \cdot \sO \geq \sum_{\rho \in \Sigma_\fa(1)}{k_\rho \cdot \gamma_{(x_\rho')}}$.
    \myitem[i$'$]\label{L:ivi-1'} $\widetilde{\pi}^{-1}I \cdot \sO \subset (x_\rho')^{k_\rho}$ for every $\rho \in \Sigma_\fa(1)$.
    \myitem[ii$'$]\label{L:ivi-2'} $\widetilde{\pi}^{-1}\gamma_I \cdot \sO \geq k_\rho \cdot \gamma_{(x_\rho')}$ for every $\rho \in \Sigma_\fa(1)$.
\end{enumerate}
\end{lemma}

\begin{proof}
The equivalence \eqref{L:ivi-1} $\Leftrightarrow$ \eqref{L:ivi-1'} is evident. The equivalences \eqref{L:ivi-1} $\Leftrightarrow$ \eqref{L:ivi-2} and \eqref{L:ivi-1'} $\Leftrightarrow$ \eqref{L:ivi-2'} follow from the fact that if $X$ is a normal variety, $I$ is an ideal on $X$, and $D$ is a divisor on $X$ with underlying ideal $I_D$, then $\gamma_D \leq \gamma_I$ if and only if $I_D \supset I$. Indeed, by \cite[Lemma A.1]{quek-weighted-log-resolution}, both statements are equivalent to $I \cdot I_D^{-1} \subset \sO_X$. 
\end{proof}

Our next goal is to provide a recharacterization of statement  \eqref{L:ivi-2'} in Lemma~\ref{L:inclusion-vs-inequality} in terms of idealistic exponents over $\AA^n$. Before doing that, we need the following lemma. 

\begin{lemma}\label{L:inequality-idealistic-exponents}
For each $\rho \in \Sigma_\fa(1)$, we have \[
    \widetilde{\pi}^{-1}\gamma^{[\rho]} \cdot \sO \geq \gamma_{(x_\rho')}.
\]
\end{lemma}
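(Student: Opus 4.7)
The plan is to verify the inequality stalk-wise on $\ZR(X_{\widehat{\Sigma}_\fa})$, using the explicit coordinate description of $\widetilde{\pi}^{\ast}x_i$ recorded in \ref{X:explicating-multi-weighted-blow-ups}(i). Recall that this reads
\[
    \widetilde{\pi}^{\ast}x_i \;=\; x_i' \cdot \prod_{\rho' \in \bE(\fa)}(x_{\rho'}')^{\ub_{\rho'} \cdot \uu_{\rho',i}} \qquad (1 \leq i \leq n),
\]
so for any valuation $\mu$ on $X_{\widehat{\Sigma}_\fa}$ centered in the chart under consideration, one has $\mu(\widetilde{\pi}^{\ast}x_i) = \mu(x_i') + \sum_{\rho' \in \bE(\fa)} \ub_{\rho'} \cdot \uu_{\rho',i} \cdot \mu(x_{\rho'}')$, with every summand non-negative. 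This is the only real input needed; the rest is bookkeeping.

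First I would dispatch the easy case $\rho = i \in [1,n] \subset \Sigma_\fa(1)$. Here $\gamma^{[i]} = \gamma_{(x_i)}$ by the convention in \ref{X:conventions-2.4}, so $(\widetilde{\pi}^{-1}\gamma^{[i]}\cdot \sO)_\mu = \mu(\widetilde{\pi}^{\ast}x_i)$, and the displayed identity immediately gives $\mu(\widetilde{\pi}^{\ast}x_i) \geq \mu(x_i') = (\gamma_{(x_i')})_\mu$, since all remaining summands are non-negative.

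For the main case $\rho \in \bE(\fa)$, I would plug the formula for $\mu(\widetilde{\pi}^{\ast}x_i)$ into the stalk-wise description \eqref{E:associated-idealistic-exponents}:
\[
    \bigl(\widetilde{\pi}^{-1}\gamma^{[\rho]}\cdot \sO\bigr)_\mu \;=\; \min_{\substack{i \in [1,n] \\ \uu_{\rho,i}\neq 0}} \frac{1}{\ub_\rho \cdot \uu_{\rho,i}}\left(\mu(x_i') + \sum_{\rho' \in \bE(\fa)} \ub_{\rho'} \cdot \uu_{\rho',i} \cdot \mu(x_{\rho'}')\right).
\]
Inside the parentheses, isolate the single term $\rho' = \rho$, which contributes $\ub_\rho \cdot \uu_{\rho,i} \cdot \mu(x_\rho')$; dropping all the other non-negative contributions and dividing by $\ub_\rho \cdot \uu_{\rho,i}$ yields $\mu(x_\rho')$. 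Taking the minimum over $i$ is harmless, since the lower bound $\mu(x_\rho') = (\gamma_{(x_\rho')})_\mu$ is independent of $i$. This gives the desired inequality.

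There is no real obstacle here: everything reduces to the tautological fact that the exponent of $x_\rho'$ in $\widetilde{\pi}^{\ast}x_i$ is exactly $\ub_\rho \cdot \uu_{\rho,i}$, which is precisely the scaling built into the definition of $\gamma^{[\rho]}$. The only point of care is to make sure the minimum is taken over a non-empty set, i.e.\ that there exists some $i \in [1,n]$ with $\uu_{\rho,i} \neq 0$, which holds because $\bu_\rho \neq 0$ for any ray $\rho \in \Sigma_\fa(1)$.
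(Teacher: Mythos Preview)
Your proof is correct and follows essentially the same approach as the paper's: both verify the inequality stalk-wise at an arbitrary $\nu \in \ZR(X_{\widehat{\Sigma}_\fa})$ using the identity $\nu(x_i) = \nu(x_i') + \sum_{\rho' \in \bE(\fa)} \ub_{\rho'}\uu_{\rho',i}\,\nu(x_{\rho'}')$ and the non-negativity of each $\nu(x_{\rho'}')$. The only cosmetic difference is that the paper rearranges this identity to solve for $\nu(x_\rho')$ and bounds it above by $\tfrac{1}{\ub_\rho\uu_{\rho,i}}\nu(x_i)$, whereas you bound the pullback expression below by isolating the $\rho'=\rho$ summand; these are the same manipulation. (Minor remark: the phrase ``centered in the chart under consideration'' is unnecessary and slightly misleading, since no chart was fixed and the non-negativity of $\mu(x_{\rho'}')$ holds globally because each $x_{\rho'}'$ is a regular function on all of $X_{\widehat{\Sigma}_\fa} \subset \AA^{\Sigma_\fa(1)}$.)
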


\begin{proof}
Let $\nu \in \ZR(X_{\widehat{\Sigma}_\fa})$ be arbitrary. If $\rho = i \in [1,n]$, then the lemma  follows from \[
    \gamma_{(x_i'),\nu} = \nu(x_i') = \nu(x_i) - \sum_{\rho \in \bE(\fa)}{\left(\ub_\rho \cdot \uu_{\rho,i}\right) \cdot \nu\left(x_\rho'\right)} \leq \nu(x_i) = \left(\widetilde{\pi}^{-1}\gamma^{[i]} \cdot \sO\right)_\nu.
\]
If instead $\rho \in \bE(\fa)$, we have, for every $1 \leq i \leq n$ such that $\uu_{\rho,i} \neq 0$,   \begin{align*}
    \gamma_{(x_\rho'),\nu} = \nu\left(x_\rho'\right) &= \frac{1}{\ub_\rho \cdot \uu_{\rho,i}} \cdot \left(\nu(x_i) - \nu(x_i') - \sum_{\widetilde{\rho} \in \bE(\fa) \smallsetminus \lbrace \rho \rbrace}{\left(\ub_{\widetilde{\rho}} \cdot \uu_{\widetilde{\rho},i}\right) \cdot \nu\left(x_{\widetilde{\rho}}'\right)}\right) \\
    &\leq \frac{1}{\ub_\rho \cdot \uu_{\rho,i}} \cdot \nu(x_i).
\end{align*}
Taking the minimum over all such $1 \leq i \leq n$, we obtain the lemma.
\end{proof}

\begin{xpar}[More conventions]\label{X:more-conventions-2.4}
Let $\ZR(\widetilde{\pi}) \colon \ZR(X_{\widehat{\Sigma}_\fa}) \to \ZR(\AA^n)$ denote the morphism of Zariski--Riemann spaces induced by $\widetilde{\pi}$; see \cite[Appendix A.3]{quek-weighted-log-resolution}. For each $\rho \in \Sigma_\fa(1)$, let $\nu_\rho'$ be the $V(x_\rho')$-divisorial valuation on $X_{\widehat{\Sigma}_\fa}$, and let $\nu_\rho = \ZR(\widetilde{\pi})(\nu_\rho')$. By Section~\ref{X:explicating-multi-weighted-blow-ups}\eqref{X:emwbu-1}, $\nu_\rho(x_i) = \ub_\rho \cdot \uu_{\rho,i}$ for every $\rho \in \Sigma_\fa(1)$ and $1 \leq i \leq n$.
\end{xpar}

\begin{proposition}\label{P:admissibility-criterion}
  For $\rho \in \Sigma_\fa(1)$ and $k \in \QQ_{>0}$, the following statements are equivalent:
  \begin{enumerate}
    \item\label{P:ac-1} $\gamma_I \geq k \cdot \gamma^{[\rho]}$.
    \item\label{P:ac-2} $\widetilde{\pi}^{-1}\gamma_I \cdot \sO \geq k \cdot \gamma_{(x_\rho')}$.
    \item\label{P:ac-3} $\gamma_{I,\nu_\rho} \geq k$ $\left(= k \cdot (\gamma^{[\rho]})_{\nu_\rho} = k \cdot \gamma_{(x_\rho'),\nu_\rho'}\right)$.
\end{enumerate}
\end{proposition}

\begin{proof}
The equivalence \eqref{P:ac-1} $\Rightarrow$ \eqref{P:ac-2} follows from Lemma~\ref{L:inequality-idealistic-exponents}. For \eqref{P:ac-2} $\Rightarrow$ \eqref{P:ac-3}, we localize the inequality in \eqref{P:ac-2} at $\nu := \nu_\rho'$ to obtain $\gamma_{I,\nu_\rho} = (\widetilde{\pi}^{-1}\gamma_I \cdot \sO)_\nu \geq k \cdot \gamma_{E_\rho,\nu} = k$. For \eqref{P:ac-3} $\Rightarrow$ \eqref{P:ac-1}, note that \eqref{P:ac-3} says that for $f = \sum_{\ba}{c_{\ba} \cdot \pmb{x}^{\ba}} \in I$, we have \begin{equation}\label{E:inequality-from-iii}
    \min_{c_{\ba} \neq 0}\left\lbrace\sum_{i=1}^n{a_i \cdot \left(\ub_\rho \cdot \uu_{\rho,i}\right)}\right\rbrace = \nu_\rho(f) \geq \gamma_{I,\nu_\rho} \geq k.
\end{equation}
Then for arbitrary $\nu \in \ZR(\AA^n)$ and $f = \sum_{\ba}{c_\ba \cdot \pmb{x}^{\ba}} \in I$, we have \begingroup
\allowdisplaybreaks
\begin{align*}
    \nu(f) \geq \min_{c_\ba \neq 0}\left\lbrace\sum_{i=1}^n{a_i \cdot \nu(x_i)}\right\rbrace &\geq \min_{c_{\ba} \neq 0}\left\lbrace\sum_{\substack{i=1 \\ \uu_{\rho,i} \neq 0}}^n{a_i\left(\ub_\rho \cdot \uu_{\rho,i}\right)\left(\frac{1}{\ub_\rho \cdot \uu_{\rho,i}} \cdot \nu(x_i) \right)}\right\rbrace \\
    &\geq (\gamma^{[\rho]})_\nu \cdot \min_{c_{\ba} \neq 0}\left\lbrace\sum_{i=1}^n{a_i \left(\ub_\rho \cdot \uu_{\rho,i}\right)}\right\rbrace \geq k \cdot (\gamma^{[\rho]})_\nu, 
\end{align*}
\endgroup
where the last inequality follows from \eqref{E:inequality-from-iii}. Therefore, $\gamma_{I,\nu} = \min\lbrace \nu(f) \colon f \in I \rbrace \geq k \cdot (\gamma^{[\rho]})_\nu$. This proves \eqref{P:ac-1}.
\end{proof}

\begin{remark}
The proof of Proposition~\ref{P:admissibility-criterion} suggests that we should interpret $\gamma_I$ as the ``Newton polyhedron $P_I$ of $I$'' and $\gamma^{[\rho]}$ as the ``hyperplane $\sum_{i=1}^n{(\ub_\rho \cdot \uu_{\rho,i}) \cdot \be_i}$ $= k$''. Then Proposition~\ref{P:admissibility-criterion}\eqref{P:ac-1} translates to the statement that ``$P_I$ is bounded below by the hyperplane $\sum_{i=1}^n{(\ub_\rho \cdot \uu_{\rho,i}) \cdot \be_i} = k$'' (see Definition~\ref{D:newton-polyhedron-of-ideal}). Combining Lemma~\ref{L:inclusion-vs-inequality} and Proposition~\ref{P:admissibility-criterion}, we get a reinterpretation of Proposition~\ref{P:total-transform-of-monomial-ideal}\eqref{P:ttomi-1} in terms of idealistic exponents. This reinterpretation is justified by  \eqref{E:inequality-from-iii}, which says that for every $\ba \in P_I$, we have $(\ub_\rho \cdot \bu_\rho) \cdot \ba \geq k$.
\end{remark}

\begin{remark}\label{R:admissibility-criterion-for-monomial-ideals}
Let us apply Proposition~\ref{P:admissibility-criterion} to $I = \fa$. For every $\rho \in \Sigma_\fa(1)$, one can compute that $\gamma_{\fa,\nu_\rho} = \ub_\rho \cdot N_\rho(\fa)$, so the proposition says $\gamma_\fa \geq (\ub_\rho \cdot N_\rho(\fa)) \cdot \gamma^{[\rho]}$. In fact, $\sup\lbrace k \in \QQ_{>0} \colon \gamma_\fa \geq k \cdot \gamma^{[\rho]} \rbrace = \ub_\rho \cdot N_\rho(\fa)$ because whenever $\gamma_\fa \geq k \cdot \gamma^{[\rho]}$, we have $k \leq \gamma_{\fa,\nu_\rho} = \ub_\rho \cdot N_\rho(\fa)$. By Lemma~\ref{L:inclusion-vs-inequality}, we therefore have $\widetilde{\pi}^{-1}\gamma_\fa \cdot \sO \geq \sum_{\rho \in \bE^+(\fa)}{(\ub_\rho \cdot N_\rho(\fa)) \cdot \gamma_{(x_\rho')}}$. In fact, Proposition~\ref{P:total-transform-of-monomial-ideal}\eqref{P:ttomi-2} says more: this inequality is an equality!
\end{remark}

By the equivalences in Lemma~\ref{L:inclusion-vs-inequality} and Proposition~\ref{P:admissibility-criterion}, the following definition of the weak transform is equivalent to Definition~\ref{D:weak-transform}.

\begin{definition}[Weak transform, revisited]\label{D:weak-transform-revisited}
Set $\sO := \sO_{\Bl_{\fa,\bb}\AA^n}$. The weak transform of an ideal $I \subset \kk[x_1,\dotsc,x_n]$ under the multi-weighted blow-up $\pi_{\fa,\bb} \colon \Bl_{\fa,\bb}\AA^n \to \AA^n$ is \[
    (\pi_{\fa,\bb})_\ast^{-1}I :=  \left(\pi^{-1}_{\fa,\bb}I \cdot \sO\right) \cdot \prod_{\rho \in \bE^+(\fa)}{\left(x_\rho'\right)}^{-K_\rho(I)}, 
\]
where for each $\rho \in \bE^+(\fa)$, $K_\rho(I)$ is the largest natural number $k_\rho$ such that $\gamma_I \geq k_\rho \cdot \gamma^{[\rho]}$ (or equivalently, $\gamma_{I,\nu_\rho} \geq k_\rho$).
\end{definition}

\section{Multi-weighted blow-ups: Canonical aspects}\label{C:multi-weighted-blow-ups:canonical-aspects}

\subsection{Canonicity of multi-weighted blow-ups, I}\label{3.1}

In this section we continue to follow the conventions in Section~\ref{X:conventions-2.1}, and we endow $\AA^n$ with the logarithmic structure $\NN^n \xrightarrow{\be_i \mapsto x_i} \kk[x_1,\dotsc,x_n]$. Let $\fa_\bullet$ be an ($\NN$-graded) \emph{monomial Rees algebra on $\AA^n$}, \textit{i.e.}~a finitely generated, $\NN$-graded $\sO_{\AA^n}$-subalgebra $\fa_\bullet = \bigoplus_{m \in \NN}{\fa_m \cdot t^m} \subset \sO_{\AA^n}[t]$ such that $\fa_0 = \sO_{\AA^n}$, $\fa_m \supset \fa_{m+1}$ for every $m \in \NN$ and each $\fa_m$ is a monomial ideal of $\kk[x_1,\dotsc,x_n]$. 

We give a definition of $\Bl_{\fa_\bullet}\AA^n$, which generalizes the notion of $\Bl_\fa\AA^n$ (see Definition~\ref{D:multi-weighted-blow-up}) for a monomial ideal $\fa$ on $\AA^n$, before demonstrating that this notion is canonically associated to $\fa_\bullet$.

\begin{definition}[Multi-weighted blow-ups along monomial Rees algebras]\label{D:multi-weighted-blow-up-along-monomial-rees-algebras}
Fix a sufficiently large $\ell \in N_{>0}$ such that the $\ell$\textsuperscript{th} Veronese subalgebra $\fa_{\ell\bullet}$ of $\fa_\bullet$ is generated in degree $1$. The multi-weighted blow-up of $\AA^n$ along $\fa_\bullet$ is then defined as \[
    \pi_{\fa_\bullet} \colon \Bl_{\fa_\bullet}\AA^n := \Bl_{\fa_\ell,\widetilde{\bb}}\AA^n \xrightarrow{\pi_{\fa_\ell,\widetilde{\bb}}} \AA^n, 
\]
where \[
    \widetilde{\bb} := \left(\frac{\ell}{\gcd\left(\ell,N_\rho(\fa_\ell)\right)} \colon \rho \in \Sigma_{\fa_\ell}(1) \right) \in \NN_{>0}^{\Sigma_{\fa_\ell}(1)}
\]
(see Remark~\ref{R:generalized-multi-weighted-blow-up}). We endow $\Bl_{\fa_\bullet}\AA^n$ with the toroidal logarithmic structure ``dictated by that of $\AA^n = \Spec(\NN^n \xrightarrow{\be_i \mapsto x_i} \kk[x_1,\dotsc,x_n])$ and its exceptional divisors''. Namely, it is obtained by descent from the following toroidal logarithmic structure on $\AA^{\Sigma_\fa(1)} \smallsetminus V(J_{\Sigma_\fa})$: 
\[
    \NN^{\Sigma_\fa(1)} \to \kk[x_1',\dotsc,x_n']\left[x_\rho' \colon \rho \in \bE(\fa)\right],
\]
which sends $\be_\rho$ to $x_\rho'$ for every $\rho \in \Sigma_\fa(1)$.
\end{definition}

Note that if $\fa_\bullet$ is generated in degree $1$, then $\Bl_{\fa_\bullet}\AA^n$ equals $\Bl_{\fa_1}\AA^n$ in Definition~\ref{D:multi-weighted-blow-up}. It is also simple but essential to verify the following. 

\begin{lemma}
The definition of $\Bl_{\fa_\bullet}\AA^n$ does not depend on the choice of $\ell \in \NN_{>0}$ such that $\fa_{\ell\bullet}$ is generated in degree $1$.
\end{lemma}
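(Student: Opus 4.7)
The plan is to show that if $\ell_1$ and $\ell_2$ are two choices of positive integer such that $\fa_{\ell_i\bullet}$ is generated in degree $1$, then both choices yield the same generalized multi-weighted blow-up. By passing to $L := \lcm(\ell_1,\ell_2)$ (which also has the property that $\fa_{L\bullet}$ is generated in degree $1$, since $L$ is a multiple of each $\ell_i$), it suffices to prove the following claim: if $\ell$ works and $k \in \NN_{>0}$, then the construction using $\ell$ agrees with the construction using $k\ell$.

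To prove the claim, I would first unpack the hypothesis. Generation of $\fa_{\ell\bullet}$ in degree $1$ means that $\fa_{\ell m} = \fa_\ell^m$ for every $m \geq 1$; in particular $\fa_{k\ell} = \fa_\ell^k$. At the level of Newton polyhedra, $P_{\fa_\ell^k} = k \cdot P_{\fa_\ell}$ by Minkowski additivity, so the normal fans coincide: $\Sigma_{\fa_{k\ell}} = \Sigma_{\fa_\ell}$, and consequently the indexing sets $\Sigma_{\fa_\ell}(1)$ and $\bE(\fa_\ell)$ (cf. \ref{X:conventions-2.1}) are canonically identified for the two choices. Scaling the polyhedron by $k$ scales the right-hand sides of the supporting affine equations: explicitly $N_\rho(\fa_{k\ell}) = k \cdot N_\rho(\fa_\ell)$ for every $\rho \in \Sigma_{\fa_\ell}(1)$.

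The next step is the core numerical verification. Substituting into the formula that defines $\widetilde{\bb}$ in Definition~\ref{D:multi-weighted-blow-up-along-monomial-rees-algebras},
\[
    \widetilde{\bb}(k\ell)_\rho \; = \; \frac{k\ell}{\gcd\bigl(k\ell,\, N_\rho(\fa_{k\ell})\bigr)} \; = \; \frac{k\ell}{\gcd\bigl(k\ell,\, k \cdot N_\rho(\fa_\ell)\bigr)} \; = \; \frac{\ell}{\gcd\bigl(\ell,\, N_\rho(\fa_\ell)\bigr)} \; = \; \widetilde{\bb}(\ell)_\rho,
\]
so the weight vectors agree under the identification above. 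Since the generalized multi-weighted blow-up $\Bl_{\fa,\widetilde{\bb}}\AA^n$ (see Remark~\ref{R:generalized-multi-weighted-blow-up}) depends on $\fa$ only through its normal fan $\Sigma_\fa$ and on $\widetilde{\bb} \in \NN_{>0}^{\Sigma_\fa(1)}$, we obtain $\Bl_{\fa_\ell,\widetilde{\bb}(\ell)}\AA^n = \Bl_{\fa_{k\ell},\widetilde{\bb}(k\ell)}\AA^n$ as toric Artin stacks over $\AA^n$.

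Finally, I would check that the toroidal logarithmic structure, as prescribed in Definition~\ref{D:multi-weighted-blow-up-along-monomial-rees-algebras}, transports along this identification. This is automatic: the log structure is defined on the Cox cover $\AA^{\Sigma_\fa(1)} \smallsetminus V(J_{\Sigma_\fa})$ by the chart $\NN^{\Sigma_\fa(1)} \to \sO$ sending $\be_\rho \mapsto x_\rho'$, a prescription that refers only to the fan $\Sigma_\fa$ and the coordinates indexed by its rays, both of which are intrinsic to $\fa_\bullet$ once $\ell$ is large enough. I do not anticipate a real obstacle here; the entire argument is a bookkeeping exercise, and the only potentially delicate point is the identification of $\bE(\fa_\ell)$ with $\bE(\fa_{k\ell})$, which is immediate from $\Sigma_{\fa_\ell} = \Sigma_{\fa_{k\ell}}$ together with the fact that the standard rays $[1,n]$ are intrinsic to $\AA^n$.
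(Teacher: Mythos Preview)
Your proof is correct and follows essentially the same approach as the paper's: reduce to comparing $\ell$ with a multiple $k\ell$, observe that $\fa_{k\ell} = \fa_\ell^k$ so the normal fans coincide and $N_\rho$ scales by $k$, and then check that the fraction $\ell/\gcd(\ell,N_\rho(\fa_\ell))$ is invariant under this scaling. The paper uses the product $\ell L$ rather than $\lcm(\ell_1,\ell_2)$ for the reduction and omits your explicit discussion of the logarithmic structure, but these are cosmetic differences.
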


\begin{proof}
Let $\ell,L \in \NN_{>0}$ be such that both $\fa_{\ell\bullet}$ and $\fa_{L\bullet}$ are generated in degree $1$. By comparing $\fa_{\ell\bullet}$ and $\fa_{L\bullet}$ with $\fa_{\ell L \bullet}$, we reduce to the case where $L = r\ell$ for some $r \in \NN_{>0}$. Then $\fa_L = (\fa_\ell)^r$, and thus the normal fans of $\fa_\ell$ and of $\fa_L$ are identical. In particular, $\Sigma_{\fa_\ell}(1) = \Sigma_{\fa_L}(1)$. Lastly, note that $N_\rho(\fa_L) = r \cdot N_\rho(\fa_\ell)$ for every $\rho \in \Sigma_{\fa_\ell}(1) = \Sigma_{\fa_L}(1)$, so $\frac{\ell}{\gcd(\ell,N_\rho(\fa_\ell))} = \frac{L}{\gcd(L,N_\rho(\fa_L))}$.
\end{proof}

\begin{remark}\label{R:reduction-to-monomial-Q-ideals}
Let $\overline{\fa}_\bullet$ denote the integral closure of $\fa_\bullet$ in $\sO_{\AA^n}[t]$. By Remark~\ref{R:same-newton-polyhedron-for-two-monomial-ideals}, we then have $\Bl_{\fa_\bullet}\AA^n = \Bl_{\overline{\fa}_\bullet}\AA^n$.

In particular, for any $\ell \in \NN_{>0}$ such that $\fa_{\ell\bullet}$ is generated in degree $1$, let $\fa_\ell^{1/\ell}$ be the integral closure in $\sO_{\AA^n[t]}$ of the $\sO_{\AA^n}$-subalgebra generated by $\fa_\ell \cdot t^\ell := \lbrace \pmb{x}^\ba \colon \pmb{x}^\ba \in \fa_\ell \rbrace$; see \cite[Notation 2.12]{quek-weighted-log-resolution}. Then \[
    \fa_\ell^{1/\ell} = \overline{\fa}_\bullet
\]
since they are both integrally closed and their $\ell$\textsuperscript{th} Veronese subalgebras coincide. Hence, $\Bl_{\fa_\bullet}\AA^n = \Bl_{\fa_\ell^{1/\ell}}\AA^n$.
\end{remark}

In \cite[Section 4]{quek-weighted-log-resolution} one also associates to $\fa_\bullet$ the following blow-up. 

\begin{definition}\label{D:weighted-monoidal-blow-up}
The \emph{weighted toroidal blow-up} of $\AA^n$ along $\fa_\bullet$ is the following stack-theoretic Proj {(see \cite[Section 10.2]{olsson-algebraic-stacks}} or \cite[Section 2]{quek-rydh-weighted-blow-up}) over $\AA^n$: \[
    \tBl_{\fa_\bullet}\AA^n := \SProj_{\AA^n}(\overline{\fa}_\bullet) := \left[\GSpec_{\AA^n}(\overline{\fa}_\bullet) \smallsetminus V(\overline{\fa}_+) \q \Gm\right] \xrightarrow{\varpi_{\fa_\bullet}} \AA^n, 
\]
where $\overline{\fa}_+$ is the ideal generated by the positive degrees of $\overline{\fa}_\bullet$ and the $\Gm$-action is induced by the $\ZZ$-grading on $\overline{\fa}_\bullet$. This is a Deligne--Mumford stack. 
\end{definition}

\begin{xpar}[Charts]\label{X:charts-for-weighted-monoidal-blow-up}
Write $\overline{\fa}_\bullet = \fa^{1/\ell}$ for a monomial ideal $\fa$ on $\AA^n$ and $\ell \in \NN_{>0}$ (see Remark~\ref{R:reduction-to-monomial-Q-ideals}). As $\bv$ varies over the vertices of $P_\fa$, the open substacks \[
    D_+\left(\pmb{x}^{\bv} \cdot t^\ell\right) := \left[\GSpec_{\AA^n}\left(\overline{\fa}_\bullet\left[\left(\pmb{x}^{\bv} \cdot t^\ell\right)^{-1}\right]\right) \q \Gm\right]
\]
cover $\tBl_{\fa_\bullet}\AA^n$; see {\eqref{E:integral-closure-of-monomial-ideal} below.} Here, $D_+(\pmb{x}^\bv \cdot t^\ell)$ is known as the $(\pmb{x}^\bv \cdot t^\ell)$-chart of $\tBl_{\fa_\bullet}\AA^n$.

Note too that $t^{-\ell} = \pmb{x}^{\bv} \cdot (\pmb{x}^{\bv} \cdot t^\ell)^{-1} \in \overline{\fa}_\bullet[(\pmb{x}^{\bv} \cdot t^\ell)^{-1}]$. Since $\overline{\fa}_\bullet[(\pmb{x}^{\bv} \cdot t^\ell)^{-1}]$ is integrally closed in $\sO_{\AA^n}[t^\pm]$, we therefore have $t^{-1} \in \overline{\fa}_\bullet[(\pmb{x}^{\bv} \cdot t^\ell)^{-1}]$. The exceptional divisor of $\tBl_{\fa_\bullet}\AA^n$ on $D_+(\pmb{x}^\bw \cdot t^r)$ is then given by $V(t^{-1})$. 
\end{xpar}

\begin{remark}\label{R:extended-rees-algebra}
  For various subtle reasons, we prefer to work instead with a ``global'' version of $\fa_\bullet$, namely the \emph{extended Rees algebra} $\overline{\fa}_\bullet^\extd$ associated to $\overline{\fa}_\bullet$. {(In fact, such extended Rees algebras were simply referred to as Rees rings or algebras; see \cite{rees-problem-of-zariski} or \cite[Section 15, Section 4]{matsumura-commutative-ring-theory}.)} The extended Rees algebra
  $\overline{\fa}_\bullet^\extd$ is defined as the $\ZZ$-graded $\sO_{\AA^n}$-subalgebra $\bigoplus_{m \in \ZZ}{\overline{\fa}_m^\extd \cdot t^m}$ of $\sO_{\AA^n}[t^\pm]$ obtained by extending $\overline{\fa}_\bullet$ trivially in negative degrees: \[
    \overline{\fa}_m^\extd = \begin{cases}
        \overline{\fa}_m \quad &\textrm{if } m \geq 0, \\
        \sO_{\AA^n} \quad &\textrm{if } m < 0.
    \end{cases}
\]
Since $\overline{\fa}_\bullet[(\pmb{x}^\bv \cdot t^\ell)^{-1}] = \overline{\fa}_\bullet^\extd[(\pmb{x}^\bv \cdot t^\ell)^{-1}]$ for every vertex $\bv$ of $P_\fa$, Definition~\ref{D:weighted-toroidal-blow-up} continues to hold after replacing $\overline{\fa}_\bullet$ with $\overline{\fa}_\bullet^\extd$: 
\[
    \tBl_{\fa_\bullet}\AA^{n;r} := \SProj_{\AA^n}\left(\overline{\fa}_\bullet^\extd\right) := \left[\GSpec_{\AA^n}\left(\overline{\fa}_\bullet^\extd\right) \smallsetminus V\left(\overline{\fa}_+^\extd\right) \q \Gm\right].
\]
In the above expression, the exceptional divisor of $\tBl_{\fa_\bullet}\AA^{n;r}$ can now be seen directly as the vanishing locus of $(t^{-1}) \subset \overline{\fa}_\bullet^\extd$, without having to pass to charts; see Section~\ref{X:charts-for-weighted-monoidal-blow-up}. This is one way in which $\overline{\fa}_\bullet^\extd$ ``globalizes''~$\overline{\fa}_\bullet$.
\end{remark}

\begin{xpar}[Logarithmic structure]\label{X:logarithmic-structure-of-weighted-monoidal-blow-up}
  The object
  $\tBl_{\fa_\bullet}\AA^n$ carries the toroidal logarithmic structure dictated by that of $\AA^n = \Spec(\NN^n \xrightarrow{\be_i \mapsto x_i} \kk[x_1,\dotsc,x_n])$ and the exceptional divisor, as demonstrated in \cite[Section~4]{quek-weighted-log-resolution}. 

To expound on this, write $\overline{\fa}_\bullet = \fa^{1/\ell}$ for a monomial ideal $\fa$ on $\AA^n$ and $\ell \in \NN_{>0}$ (see Remark~\ref{R:reduction-to-monomial-Q-ideals}). Let $\Upgamma$ denote the saturation of the submonoid of $\NN^n \oplus \ZZ$ generated by $\NN^{n+1}$ and $(\bv,-\ell)$ for vertices $\bv$ of $P_\fa$. Recall from Remark~\ref{R:same-newton-polyhedron-for-two-monomial-ideals}\eqref{R:snpftmi-1} that \begin{equation}\label{E:integral-closure-of-monomial-ideal}
    \overline{\fa} = \overline{(\pmb{x}^\bv \colon \bv \textrm{ vertex of } P_\fa)} = \overline{(\pmb{x}^{\bv_\sigma} \colon \sigma \textrm{ maximal cone of } \Sigma_\fa)}.
\end{equation}
Therefore, the assignment \[
    \be_i \mapsto \begin{cases}
        x_i \quad &\textrm{for } 1 \leq i \leq n, \\
        t^{-1} \quad &\textrm{for } i = n+1
    \end{cases}
\]
defines the following isomorphisms: \begin{equation}\label{E:logarithmic-structure-of-weighted-monoidal-blow-up}
    \begin{tikzcd}
    \kk[\NN^n \oplus \ZZ] \arrow[to=1-2, "\simeq"] & \sO_{\AA^n}[t^\pm] \\
    \kk[\Upgamma] \arrow[to=1-1, hookrightarrow] \arrow[to=2-2, "\simeq"] & \overline{\fa}_\bullet^\extd\rlap{.} \arrow[to=1-2, hookrightarrow]
    \end{tikzcd}
\end{equation}
The bottom row defines an isomorphism $\Spec(\kk[\Upgamma]) \xrightarrow{\lowsimeq} \GSpec_{\AA^n}(\overline{\fa}_\bullet^\extd)$. The former has the toroidal logarithmic structure $\Upgamma \inj \kk[\Upgamma]$ and hence defines a toroidal logarithmic structure on $\GSpec_{\AA^n}(\overline{\fa}_\bullet^\extd) \smallsetminus V(\overline{\fa}_+^\extd) \subset \GSpec_{\AA^n}(\overline{\fa}_\bullet^\extd)$. The toroidal logarithmic structure on $\tBl_{\fa_\bullet}\AA^n$ is then obtained by descent.
\end{xpar}


We can now state the main goal of this section. 

\begin{proposition}\label{P:canonicity-of-multi-weighted-blow-up}
  The object  $\Bl_{\fa_\bullet}\AA^n$ is the canonical smooth, toroidal Artin stack over $\tBl_{\fa_\bullet}\AA^n$.
\end{proposition}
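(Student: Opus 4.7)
My plan is to reduce to a local, chart-by-chart matching and to invoke functoriality of Satriano's canonical stack construction under toroidal open immersions. By Remark~\ref{R:reduction-to-monomial-Q-ideals}, we may take $\overline{\fa}_\bullet = \fa^{1/\ell}$ with $\fa := \fa_\ell$ generated in degree one. The first step is to unpack the toroidal chart structure of $\tBl_{\fa_\bullet}\AA^n$ using \ref{X:charts-for-weighted-monoidal-blow-up} and \eqref{E:logarithmic-structure-of-weighted-monoidal-blow-up}: for each vertex $\bv_\sigma$ of $P_\fa$ (equivalently, each maximal cone $\sigma$ of $\Sigma_\fa$), the chart $D_+(\pmb{x}^{\bv_\sigma} \cdot t^\ell)$ is isomorphic to $\Spec\kk[\Gamma_\sigma]$ where $\Gamma_\sigma$ is the sharp fs monoid obtained by localizing $\Gamma$ at the face corresponding to $\bv_\sigma$; its dual cone sits inside $N \oplus \ZZ$ cut out by the equation $\bv_\sigma \cdot \bm = -\ell k$, together with the positivity conditions determined by the rays of $\sigma$.

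The next step is to apply Satriano's construction chart-by-chart. Recall that for a toroidal affine chart with sharp fs monoid $P$, Satriano's canonical stack is the fantastack $\sF_{\Sigma,\beta}$ where $\Sigma$ is the fan spanned by $P^\vee$ and $\beta$ sends each standard basis vector to the primitive generator of the corresponding ray. For our $\bv_\sigma$-chart, the rays of $\Gamma_\sigma^\vee$ are in canonical bijection with $\sigma(1)$, and a short divisibility computation in $(\Gamma_\sigma^\gp)^\vee$ shows that the primitive generator of the ray corresponding to $\rho \in \sigma(1)$ is exactly $\tfrac{\ell}{\gcd(\ell, N_\rho(\fa))} \cdot (\bu_\rho, N_\rho(\fa))$. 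This multiplicity matches precisely the definition of $\widetilde{\bb}$ in Definition~\ref{D:multi-weighted-blow-up-along-monomial-rees-algebras}, so by Remark~\ref{R:generalized-multi-weighted-blow-up} the resulting fantastack on the chart is canonically identified with the $x_\sigma'$-chart of $\Bl_{\fa,\widetilde{\bb}}\AA^n = \Bl_{\fa_\bullet}\AA^n$.

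Finally, since Satriano's construction is functorial under toroidal open immersions, the local identifications over the $(\pmb{x}^{\bv_\sigma} \cdot t^\ell)$-charts glue automatically to a canonical isomorphism $\Bl_{\fa_\bullet}\AA^n \xrightarrow{\simeq} (\tBl_{\fa_\bullet}\AA^n)^{\can}$ over $\tBl_{\fa_\bullet}\AA^n$, giving the proposition. I expect the main obstacle to be the divisibility bookkeeping in the middle step: verifying that the primitive lattice generator in $(\Gamma_\sigma^\gp)^\vee$ acquires exactly the factor $\ell/\gcd(\ell, N_\rho(\fa))$ relative to $(\bu_\rho, N_\rho(\fa)) \in N \oplus \ZZ$. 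This is the only place where $\ell$ enters non-trivially, and it both justifies the seemingly ad hoc choice of $\widetilde{\bb}$ in Definition~\ref{D:multi-weighted-blow-up-along-monomial-rees-algebras} and confirms that the construction is independent of $\ell$ (consistent with Remark~\ref{R:reduction-to-monomial-Q-ideals}), since replacing $\ell$ by $r\ell$ simultaneously scales $N_\rho(\fa_{r\ell}) = r \cdot N_\rho(\fa_\ell)$ and leaves the ratio unchanged.
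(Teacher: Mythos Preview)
Your overall strategy --- a chart-by-chart matching via the primitive-ray computation that yields the factor $\ell/\gcd(\ell,N_\rho(\fa))$, followed by gluing --- is the same as the paper's. But there is a real gap in your local setup: the chart $D_+(\pmb{x}^{\bv_\sigma}\cdot t^\ell)\subset\tBl_{\fa_\bullet}\AA^n$ is \emph{not} an affine toric variety $\Spec\kk[\Gamma_\sigma]$. It is the quotient stack $\bigl[\Spec\bigl(\overline{\fa}_\bullet^\extd[(\pmb{x}^{\bv_\sigma}t^\ell)^{-1}]\bigr)\big/\Gm\bigr]$, and whenever $\ell>1$ it carries genuine $\mu$-isotropy (already for $\fa_\bullet=(x)^{1/2}$ on $\AA^1$ the unique chart has a $\mu_2$-point). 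Localizing $\Gamma$ gives a non-sharp monoid, and sharpening it discards exactly the lattice datum encoding that stackiness; so you cannot invoke Satriano's construction for affine toric varieties (\ref{X:satriano-construction}) --- you must use the version for toric stacks arising from stacky cones (\ref{X:satriano-construction-for-toric-stacks}). Relatedly, your primitive-generator formula is off: in $\ZZ^{n+1}$ the ray normal to the facet of $C(\Gamma)$ lying over $H_\rho$ has direction $(\ell\bu_\rho,N_\rho(\fa))$, so its primitive generator is $\bigl(\tfrac{\ell}{\gcd(\ell,N_\rho(\fa))}\bu_\rho,\,\tfrac{N_\rho(\fa)}{\gcd(\ell,N_\rho(\fa))}\bigr)$, not $\tfrac{\ell}{\gcd(\ell,N_\rho(\fa))}\cdot(\bu_\rho,N_\rho(\fa))$.

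The paper handles this by working one level up: it applies \ref{X:satriano-construction-for-toric-stacks} once to the ambient toric stack $\fM:=[\Spec\kk[\Gamma]/\Gm]$, arising from the stacky cone $(C(\Gamma)^\vee,\beta)$ with $\beta\colon\ZZ^{n+1}\to\ZZ^n$ the projection. The extremal rays of $C(\Gamma)^\vee$ are computed once in $\ZZ^{n+1}$ --- they are the $\be_i$ for $1\le i\le n$ together with the $\widetilde{\bu}_\rho$ above --- and applying $\beta$ recovers exactly the $\widetilde\bb$-scaled generators of Definition~\ref{D:multi-weighted-blow-up-along-monomial-rees-algebras}. Since $\tBl_{\fa_\bullet}\AA^n$ sits inside $\fM$ as a strict open substack, its canonical stack is obtained by strict base change (Remark~\ref{R:satriano-construction}); the paper then checks chart-by-chart (Proposition~\ref{P:canonicity-of-multi-weighted-blow-up-revisited}) that this restriction is $D_+(\sigma)\subset\Bl_{\fa_\bullet}\AA^n$, using \cite[Lemma~1.3.1]{quek-rydh-weighted-blow-up} to absorb the extra invertible variables $\chi_i$ indexed by those $i$ with $N_i(\fa)>0$. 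Your plan is repairable along these lines, but the passage through the ambient stacky cone is not optional when $\ell>1$.
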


The canonicity asserted in Proposition~\ref{P:canonicity-of-multi-weighted-blow-up} is in the sense of Satriano in \cite{satriano-canonical-artin-stacks-for-log-smooth-schemes}, which we will now recall in detail.

\begin{xpar}\label{X:satriano-construction}
  Given a toroidal $\kk$-scheme $Y$, Satriano demonstrates in \cite[Section 3]{satriano-canonical-artin-stacks-for-log-smooth-schemes} that there is a smooth, toroidal Artin stack $\sY$ over $Y$, which satisfies the following universal property. Any \emph{sliced resolution}, see \cite[Definition 2.6]{satriano-canonical-artin-stacks-for-log-smooth-schemes}, from a fine and saturated (fs)
  logarithmic scheme $(T,\sM_T)$ to $(Y,\sM_Y)$ factors uniquely as a strict morphism $(T,\sM_T) \to (\sY,\sM_{\sY})$ followed by $(\sY,\sM_{\sY}) \to (Y,\sM_Y)$. We call $\sY \to Y$ the \emph{canonical smooth, toroidal Artin stack over $Y$.}

In \cite[Proposition 3.1]{satriano-canonical-artin-stacks-for-log-smooth-schemes} Satriano gives the following local description of $\sY \to Y$. Let $Y = \Spec(\Upgamma \inj \kk[\Upgamma])$ for a sharp, toric monoid $\Upgamma$. Let $C(\Upgamma)$ denote the rational cone generated by $\Upgamma$ in $M_\RR := \Upgamma^\gp \otimes_\ZZ \RR$, and $C(\Upgamma)^\vee$ be the dual cone in $N_\RR := M_\RR^\vee$. For an extremal ray $\rho$ of $C(\Upgamma)^\vee$, we denote by $\bu_\rho$ the first lattice point on $\rho$. 

Let $F$ denote the free monoid on the set $S$ of extremal rays $\rho$ of $C(\Upgamma)^\vee$, and consider $\iota \colon \Upgamma \inj F$ defined by 
\begin{equation}\label{E:minimal-free-resolution}
    \bv \longmapsto (\bu_\rho \cdot \bv)_{\rho \in S} \quad \textrm{for } \bv \in \Upgamma \subset C(\Upgamma).
\end{equation}
Then $\iota$ is a \emph{minimal free resolution}, in the sense of \cite[Definition 2.3]{satriano-canonical-artin-stacks-for-log-smooth-schemes}. Setting $D(-) := \Hom_{\GrpSch}(-,\Gm)$, $\iota$ induces the morphism \[
    \left[\Spec(F \longinj \kk[F]) \q D(F^\gp / \Upgamma^\gp)\right] \lra \Spec(\Upgamma \inj \kk[\Upgamma]), 
\]
which is $\sY \to Y$.
\end{xpar}

\begin{remark}\label{R:satriano-construction}
By descent, Satriano's demonstration immediately generalizes for a toroidal Artin stack $Y$ over~$\kk$.  {We may appeal to descent because Satriano's construction \emph{commutes with strict, smooth morphisms}}. More precisely, given a strict morphism $f \colon \widetilde{Y} \to Y$ between toroidal Artin stacks, the canonical smooth, toroidal Artin stack over $\widetilde{Y}$ is the Cartesian product $\widetilde{Y} \times_Y \sY$ in the category of fs logarithmic Artin stacks, where $\sY \to Y$ is the canonical smooth, toroidal Artin stack over $Y$. This can be seen using the universal property in Section~\ref{X:satriano-construction}. {Indeed, it suffices to note that given any sliced resolution $g \colon T \to \widetilde{Y}$, the composition $g \circ f \colon T \to Y$ is still a sliced resolution because the induced morphism $\overline{\sM}_{Y,f(p)} \to \overline{\sM}_{\widetilde{Y},p}$ is an isomorphism for all geometric points $p$ of $\widetilde{Y}$. Thus, $g \circ f$ factors uniquely as $T \xrightarrow{\textrm{strict}} \sY \to Y$, and hence $g$ factors uniquely as $T \xrightarrow{\textrm{strict}} \sY \times_Y \widetilde{Y} \to \widetilde{Y}$.}
\end{remark}

In particular, Satriano's construction can be explicated for a toric Artin stack $Y$ arising from a \emph{stacky cone} $(\sigma,\beta)$; see \cite[Definition 2.4]{geraschenko-satriano-toric-stacks-I}. Here, $\beta$ is a homomorphism of lattices $N \to L$ with finite cokernel, and we assume $\sigma$ is a strongly convex, rational cone in $N_\RR := N \otimes_\ZZ \RR$. The dual morphism $\beta^\vee \colon L^\vee \to N^\vee$ is injective, and the dual cone $\sigma^\vee$ in $N_\RR^\vee$ yields the sharp, toric monoid $\Upgamma := \sigma^\vee \cap N^\vee$ and hence gives rise to the affine toric variety $\Spec(\Upgamma \inj \kk[\Upgamma])$. We then get the toric stack $Y := [\Spec(\Upgamma \inj \kk[\Upgamma]) \q G]$, where $G := D(\Coker(\beta^\vee))$ acts as a subgroup of the torus $T_N := D(N)$ (with $D(-) := \Hom_{\GrpSch}(-,\Gm)$). 

\begin{xpar}\label{X:satriano-construction-for-toric-stacks}
With the above notation, the canonical smooth, toroidal Artin stack $\sY$ over $Y$ can be constructed as follows. Let $F$ denote the monoid on the set $S$ of extremal rays $\rho$ of $\sigma = C(\Upgamma)^\vee$, and set $N_F^\vee := F^\gp$. The same rule as in \eqref{E:minimal-free-resolution} defines an embedding of lattices $\eta^\vee \colon N^\vee \inj N_F^\vee$, which restricts to a minimal free resolution $\iota \colon \Upgamma \inj F$ and fits in the commutative diagram \begin{equation}\label{E:satriano-construction-for-toric-stacks}
    \begin{tikzcd}
    0 \arrow[to=1-2] & L^\vee \arrow[to=2-2, equal] \arrow[to=1-3, hookrightarrow, "\beta_F^\vee"] & N_F^\vee \arrow[to=1-4] & \Coker\left(\beta_F^\vee\right) \arrow[to=1-5] & 0 \\
    0 \arrow[to=2-2] & L^\vee \arrow[to=2-3, hookrightarrow, "\beta^\vee"] & N^\vee \arrow[to=1-3, swap, hookrightarrow, "\eta^\vee"] \arrow[to=2-4] & \Coker(\beta^\vee) \arrow[to=1-4, hookrightarrow] \arrow[to=2-5] & 0\rlap{.}
    \end{tikzcd}
\end{equation}
The stacky cone $(\sigma_{\std},\beta_F)$, where $\sigma_{\std}$ is the standard cone on $N_F$ and $\beta_F \colon N_F \to L$ is the dual of $\beta_F^\vee$, then induces the corresponding smooth toric stack $\left[\Spec(F \inj \kk[F]) \q G_F\right]$, where $G_F := D(\Coker(\beta_F))$ acts as a subgroup of the torus $T_{N_F} := D(N_F)$ (with $D(-) := \Hom_{\GrpSch}(-,\Gm)$). Finally, the above commutative diagram induces the toric morphism \[
    \left[\Spec(F \longinj \kk[F]) \q G_F\right] \lra \left[\Spec(\Upgamma \longinj \kk[\Upgamma]) \q G\right], 
\]
which is $\sY \to Y$.
\end{xpar}

\begin{proof}[Proof of Proposition~\ref{P:canonicity-of-multi-weighted-blow-up}]
Without loss of generality (see Remark~\ref{R:reduction-to-monomial-Q-ideals}), we may replace $\fa_\bullet$ by $\overline{\fa}_\bullet$. Write $\fa_\bullet = \fa^{1/\ell}$ for a monomial ideal $\fa$ on $\AA^n$ and $\ell \in \NN_{>0}$. Our approach is to first carry out the construction in Section~\ref{X:satriano-construction-for-toric-stacks} for the toric Artin stack \[
    \fM := \left[\GSpec_{\AA^n}\left(\Upgamma \longinj \kk[\Upgamma] \xrightarrow[\eqref{E:logarithmic-structure-of-weighted-monoidal-blow-up}]{\lowsimeq} \fa_\bullet^\extd\right) \q \Gm\right]
\]
before doing the same for $\tBl_{\fa_\bullet}\AA^n$, which is a strict, open substack of $\fM$.

\begin{xpar}[Step 1]\label{X:step-1}
By definition, the toric Artin stack $\fM$ arises from the stacky cone $(\sigma,\beta)$, whose dual is given by \[
    \beta^\vee = (\one_{\ZZ^n},0) \colon \ZZ^n \longinj \ZZ^{n+1} \quad \textrm{and} \quad \sigma^\vee \cap \ZZ^{n+1} = \Upgamma; 
\]
\textit{i.e.} $\sigma := C(\Upgamma)^\vee \subset \RR^{n+1}$, and $\beta \colon \ZZ^{n+1} \to \ZZ^n$ is the projection onto the first $n$ factors. Next, the extremal rays of $\sigma$ are the normal rays to the facets of $C(\Upgamma)$, and so the set $\bS$ of their first lattice points is the disjoint union of \begin{enumerate}
    \item\label{X:s1-1} $\lbrace \be_i \colon 1 \leq i \leq n \rbrace$ and
    \item\label{X:s1-2} $\left\lbrace \widetilde{\bu}_\rho := \left(\frac{\ell}{\gcd(\ell,N_\rho(\fa))} \cdot \bu_\rho,\frac{N_\rho(\fa)}{\gcd(\ell,N_\rho(\fa))}\right) \colon \rho \in \Sigma_\fa(1) \textrm{ with } N_\rho(\fa) > 0 \right\rbrace$.
\end{enumerate}
Indeed, \eqref{X:s1-1} is evident since the coordinate hyperplanes $\be_i = 0$ ($1 \leq i \leq n$) intersect $C(\Upgamma)$ in facets. For \eqref{X:s1-2}, note that the intersection of $C(\Upgamma)$ with the hyperplane $\be_{n+1} = -\ell$ is canonically identified with $P_\fa$, and the (non-empty) intersection of every other facet of $C(\Upgamma)$ with this hyperplane $\be_{n+1} = -\ell$ corresponds to a unique facet $H_\rho$ of $P_\fa$ satisfying $N_\rho(\fa) > 0$.
\end{xpar}

\begin{xpar}[Step 2]\label{X:step-2}
Let us rewrite $\bS$ as the  disjoint union of the following sets: \begin{enumerate}
    \item $\bS_1 := \lbrace \be_i \colon 1 \leq i \leq n$ with $N_i(\fa) > 0 \rbrace$ and
    \item $\bS_2 := \left\lbrace \widetilde{\bu}_\rho := \left(\frac{\ell}{\gcd(\ell,N_\rho(\fa))} \cdot \bu_\rho,\frac{N_\rho(\fa)}{\gcd(\ell,N_\rho(\fa))}\right) \colon \rho \in \Sigma_\fa(1) \right\rbrace$.
\end{enumerate}
We take the indexing set of $\bS_2$ to be $\Sigma_\fa(1)$ and  denote the indexing set of $\bS_1$ by $I := \lbrace 1 \leq i \leq n \colon N_i(\fa) > 0 \rbrace$. 

By Section~\ref{X:satriano-construction-for-toric-stacks}, the canonical smooth, toroidal Artin stack $\sM$ over $\fM$ arises from the stacky cone $(\sigma_{\std},\beta_F)$, where $\sigma_{\std}$ is the standard cone on $\ZZ^I \oplus \ZZ^{\Sigma_\fa(1)}$, and the dual of $\beta_F$ fits in the following commutative diagram: \begin{equation}\label{E:step-2}
    \begin{tikzcd}
    0 \arrow[to=1-2] & \ZZ^n \arrow[to=2-2, equal] \arrow[to=1-3, hookrightarrow, "\beta_F^\vee"] & \ZZ^I \oplus \ZZ^{\Sigma_\fa(1)} \arrow[to=1-4] & \Coker\left(\beta_F^\vee\right) \arrow[to=1-5] & 0 \\
    0 \arrow[to=2-2] & \ZZ^n \arrow[to=2-3, hookrightarrow, "\beta^\vee"] & \ZZ^{n+1} \arrow[to=1-3, hookrightarrow, swap, "\eta^\vee"] \arrow[to=2-4] & \ZZ \arrow[to=2-5] \arrow[to=1-4, hookrightarrow] & 0\rlap{.}
    \end{tikzcd}
\end{equation}
Here the matrix of $\eta^\vee$ has rows given by $\be_i$ for $i \in I$ and $\widetilde{\bu}_\rho$ for $\rho \in \Sigma_\fa(1)$, and the matrix of $\beta_F^\vee$ is obtained by deleting the last column of the matrix of $\eta^\vee$. Recall that $\eta^\vee$ restricts to a minimal free resolution $\iota \colon \Upgamma \inj \NN^I \oplus \NN^{\Sigma_\fa(1)}$. Explicitly,  \[
    \sM = \left[\Spec\left(\NN^I \oplus \NN^{\Sigma_\fa(1)} \longinj \kk\left[\chi_i \colon i \in I\right]\left[x_\rho' \colon \rho \in \Sigma_\fa(1)\right]\right) \q D\left(\Coker\left(\beta_F^\vee\right)\right)\right]
\]
and $\sM \to \fM$ is induced by \begin{equation}\label{E:sM-to-fM}
    x_i \longmapsto \chi_i \cdot \prod_{\rho \in \Sigma_\fa(1)}{\left(x_\rho'\right)^{\frac{\ell}{\gcd(\ell,N_\rho(\fa))} \cdot \uu_{\rho,i}}} \quad \textrm{and} \quad t^{-1} \longmapsto \prod_{\rho \in \Sigma_\fa(1)}{\left(x_\rho'\right)^{\frac{N_\rho(\fa)}{\gcd(\ell,N_\rho(\fa))}}}, 
\end{equation}
where $\chi_i := 1$ whenever $i \in [1,n] \smallsetminus I$.
\end{xpar}

\begin{xpar}[Step 3]\label{X:step-3}
We show, in this step, the following strengthening of Proposition~\ref{P:canonicity-of-multi-weighted-blow-up}. 

\begin{proposition}\label{P:canonicity-of-multi-weighted-blow-up-revisited}
  Let $\fa_\bullet = \fa^{1/\ell}$ for some monomial ideal $\fa$ on $\AA^n$ and $\ell \in \NN_{>0}$. For every maximal cone $\sigma$ of $\Sigma_\fa$, the chart 
  $D_+(\sigma) \subset \Bl_{\fa_\bullet}\AA^n$ is the canonical smooth, toroidal Artin stack over $D_+(\pmb{x}^{\bv_\sigma} \cdot t^\ell) \subset \tBl_{\fa_\bullet}\AA^n$.
\end{proposition}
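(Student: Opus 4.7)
The plan is to combine the global computation of $\sM \to \fM$ carried out in \ref{X:step-1}--\ref{X:step-2} with Remark~\ref{R:satriano-construction}, which guarantees that Satriano's construction commutes with strict morphisms. Since $\tBl_{\fa_\bullet}\AA^n = \fM \smallsetminus V(\overline{\fa}_+^\extd)$ is a strict open substack of $\fM$ and $D_+(\pmb{x}^{\bv_\sigma} \cdot t^\ell)$ is a further strict open substack, the canonical smooth, toroidal Artin stack over $D_+(\pmb{x}^{\bv_\sigma} \cdot t^\ell)$ must be the fiber product $\sM \times_\fM D_+(\pmb{x}^{\bv_\sigma} \cdot t^\ell)$. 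The task is therefore to identify this open substack of $\sM$ with $D_+(\sigma) \subset \Bl_{\fa_\bullet}\AA^n$.

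First, I would compute the pullback of $\pmb{x}^{\bv_\sigma} \cdot t^\ell$ to $\sM$ via the substitutions in \eqref{E:sM-to-fM}. Using that $\bu_\rho \cdot \bv_\sigma \geq N_\rho(\fa)$ for every $\rho \in \Sigma_\fa(1)$, with equality precisely when $\rho \subset \sigma$, this pullback factors as
\[
    \prod_{i \in I} \chi_i^{\uv_{\sigma,i}} \cdot \prod_{\substack{\rho \in \Sigma_\fa(1) \\ \rho \not\subset \sigma}} (x_\rho')^{\frac{\ell}{\gcd(\ell,\, N_\rho(\fa))}\cdot(\bu_\rho \cdot \bv_\sigma - N_\rho(\fa))},
\]
the omitted exponents all being zero, and with $\uv_{\sigma,i} \geq N_i(\fa) > 0$ for every $i \in I$. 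Consequently $\sM \times_\fM D_+(\pmb{x}^{\bv_\sigma} \cdot t^\ell)$ is precisely the open substack of $\sM$ on which every $\chi_i$ ($i \in I$) and every $x_\rho'$ ($\rho \in \Sigma_\fa(1) \smallsetminus \sigma$) is invertible.

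The final step will be a structural comparison of group quotients. From the lower row of \eqref{E:step-2}, the composition $\ZZ^n \xrightarrow{\beta_F^\vee} \ZZ^I \oplus \ZZ^{\Sigma_\fa(1)} \twoheadrightarrow \ZZ^{\Sigma_\fa(1)}$ agrees with the map $\widetilde{\beta}^\vee$ of Remark~\ref{R:generalized-multi-weighted-blow-up}, which is injective because the standard rays alone span $\RR^n$. The snake lemma then yields
\[
    0 \to \ZZ^I \to \Coker(\beta_F^\vee) \to \Coker(\widetilde{\beta}^\vee) \to 0,
\]
whose dual is an extension $1 \to D(\Coker(\widetilde{\beta}^\vee)) \to D(\Coker(\beta_F^\vee)) \to \Gm^I \to 1$. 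The quotient $\Gm^I$ acts on $\sM$ by rescaling the $\chi_i$ alone, and this action is free once every $\chi_i$ is invertible; contracting it eliminates the $\chi_i$-coordinates and leaves the presentation $\bigl[(\AA^{\Sigma_\fa(1)} \smallsetminus V(x_\sigma')) \q D(\Coker(\widetilde{\beta}^\vee))\bigr]$, which is exactly $D_+(\sigma) \subset \Bl_{\fa_\bullet}\AA^n$ by Remark~\ref{R:generalized-multi-weighted-blow-up} and \ref{X:explicating-multi-weighted-blow-ups}(iii), since $x_\sigma' = \prod_{\rho \not\subset \sigma} x_\rho'$. The main obstacle I foresee is the bookkeeping in this last step: carefully verifying that the two quotient presentations genuinely match, and that inverting all $x_\rho'$ with $\rho \not\subset \sigma$ coincides with removing the vanishing of the single irrelevant-ideal generator $x_\sigma'$.
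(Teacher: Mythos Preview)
Your proposal is correct and follows essentially the same route as the paper: use Remark~\ref{R:satriano-construction} to reduce to computing $\sM \times_\fM D_+(\pmb{x}^{\bv_\sigma} \cdot t^\ell)$, compute the pullback of $\pmb{x}^{\bv_\sigma} \cdot t^\ell$ via \eqref{E:sM-to-fM}, and then eliminate the invertible $\chi_i$-coordinates to land on the chart description in Remark~\ref{R:generalized-multi-weighted-blow-up}. The only difference is that for the final reduction the paper invokes \cite[Lemma~1.3.1]{quek-rydh-weighted-blow-up} (noting that each $\be_i$ has infinite order in $\Coker(\beta_F^\vee)$), whereas you unwind that lemma by hand via the snake-lemma exact sequence and the free $\Gm^I$-action; your anticipated ``bookkeeping obstacle'' is exactly what that cited lemma packages.
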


\begin{proof}[Proof of Proposition~\ref{P:canonicity-of-multi-weighted-blow-up-revisited}]
  Since $D_+(\pmb{x}^{\bv_\sigma} \cdot t^\ell)$ is a strict, open substack of $\fM$, Remark~\ref{R:satriano-construction} says that the canonical smooth, toroidal Artin stack over $D_+(\pmb{x}^{\bv_\sigma} \cdot t^\ell)$ is $\sM_\sigma := D_+(\pmb{x}^\bv_\sigma \cdot t^\ell) \times_\fM \sM \xhookrightarrow{\textrm{\tiny strict, open}} \sM$. To explicate $\sM_\sigma$, note that by \eqref{E:sM-to-fM}, we have 
  \[
    \pmb{x}^{\bv_\sigma} \cdot t^\ell \;\; \longmapsto \;\;\; \prod_{i \in I}{\chi_i^{\uv_{\sigma,i}}} \cdot \prod_{\rho \in \Sigma_\fa(1)}{\left(x_\rho'\right)^{\frac{\ell}{\gcd(\ell,N_\rho(\sigma))} \cdot \left(\sum_{i=1}^n{\uv_i \cdot \uu_{\rho,i}} - N_\rho(\fa)\right)}}, 
\]
where $\uv_{\sigma,i} \geq N_i(\fa) > 0$ for all $i \in I$ and $\sum_{i=1}^n{\uv_i \cdot \uu_{\rho,i}} - N_\rho(\fa) > 0$ if and only if $\bv_\sigma \notin H_\rho$, \textit{i.e.} $\rho \not\subset \sigma$ (see Section~\ref{X:conventions-2.1}). Therefore, $\sM_\sigma$ is equal to \[
    \left[\Spec\left(\NN^I \oplus \NN^{\Sigma_\fa(1)} \lra \kk\left[\chi_i^{\pm} \colon i \in I\right]\left[x_\rho' \colon \rho \in \Sigma_\fa(1)\right]\left[(x_\sigma')^{-1}\right]\right) \q D\left(\Coker\left(\beta_F^\vee\right)\right)\right].
\]
For every $i \in I$, note that the image of $\be_i$ in $\Coker(\beta_F^\vee)$ ($=$ the weight of $\chi_i$ under the $\Coker(\beta_F^\vee)$-grading) has infinite order. Therefore, by {Lemma~\ref{L:slicing}}, we have \[
    \sM_\sigma = \left[\Spec\left(\NN^{\Sigma_\fa(1)} \lra \kk\left[x_\rho' \colon \rho \in \Sigma_\fa(1)\right]\left[(x_\sigma')^{-1}\right]\right) \q D\left(\Coker\left(\widetilde{\beta}^\vee\right)\right)\right], 
\]
where $\widetilde{\beta}^\vee$ is the composition $\ZZ^n \xhookrightarrow{\beta_F^\vee} \ZZ^I \oplus \ZZ^{\Sigma_\fa(1)} \xtwoheadrightarrow{\textrm{projection}} \ZZ^{\Sigma_\fa(1)}$ and $D(\Coker(\widetilde{\beta}^\vee))$ acts as a subgroup of the torus $D(\ZZ^{\Sigma_\fa(1)}) = T_{\ZZ^{\Sigma_\fa(1)}}$. 

Since the matrix of $\widetilde{\beta}^\vee$ has rows given by $\frac{\ell}{\gcd(\ell,N_\rho(\fa))} \cdot \bu_\rho$ for $\rho \in \Sigma_\fa(1)$, it follows by definition that $\sM_\sigma = D_+(\sigma) \subset \Bl_{\fa_\bullet}\AA^n$ (this also means that their logarithmic structures coincide).
\end{proof}
\end{xpar}

Finally, as $\sigma$ varies over all maximal cones of $\Sigma_\fa$, the charts $D_+(\sigma)$ cover $\tBl_{\fa_\bullet}\AA^n$ and the charts $D_+(\pmb{x}^{\bv_\sigma} \cdot t^\ell)$ cover $\tBl_{\fa_\bullet}\AA^n$. Since Satriano's construction is canonical, this completes the proof of Proposition~\ref{P:canonicity-of-multi-weighted-blow-up}.
\end{proof}

\begin{remark}\label{R:factored-morphism-formula}
Let $\fa_\bullet = \fa^{1/\ell}$ be as before. Then the morphism \[
    \begin{tikzcd}
    \Bl_{\fa_\bullet}\AA^n = \left[\Spec\left(\kk[x_1',\dotsc,x_n']\left[x_\rho' \colon \rho \in \bE(\fa)\right]\right) \smallsetminus V\left(J_{\Sigma_\fa}\right)\q D\left(\Coker\left(\widetilde{\beta}^\vee\right)\right)\right] \arrow[to=2-1] \\
    \tBl_{\fa_\bullet}\AA^n = \left[\Spec\left(\fa_\bullet^\extd\right) \smallsetminus V\left(\fa_+^\extd\right) \q \Gm\right]
    \end{tikzcd}
\]
is induced by
\begin{align}\label{E:factored-morphism-formula}
    \begin{split}
    x_i &\longmapsto (x_i')^{\frac{\ell}{\gcd(\ell,N_\rho(\fa))}} \cdot \prod_{\rho \in \bE(\fa)}{(x_\rho')^{\frac{\ell}{\gcd(\ell,N_\rho(\fa))} \cdot \uu_{\rho,i}}} \quad \textrm{for } 1 \leq i \leq n,  \\
    t^{-1} &\longmapsto \prod_{\rho \in \Sigma_\fa(1)}{\left(x_\rho'\right)^{\frac{N_\rho(\fa)}{\gcd(\ell,N_\rho(\fa))}}}.
    \end{split}
\end{align}
\end{remark}

\begin{remark}\label{R:properties-of-factored-morphism}
The morphism $\Bl_{\fa_\bullet}\AA^n \to \tBl_{\fa_\bullet}\AA^n$ is evidently toric (in particular, logarithmically smooth) and birational. Since $\pi_{\fa_\bullet} \colon \Bl_{\fa_\bullet}\AA^n \to \AA^n$ is universally closed (see Remark~\ref{R:multi-weighted-blow-up-properties}) and $\varpi_{\fa_\bullet} \colon \tBl_{\fa_\bullet}\AA^n \to \AA^n$ is proper {(since its coarse moduli space is proper over $\AA^n$)}, we deduce that $\Bl_{\fa_\bullet}\AA^n \to \tBl_{\fa_\bullet}\AA^n$ is universally closed. Therefore, it is also surjective since it is dominant and closed. Finally, as a birational morphism, it is \emph{small}; \textit{i.e.} it has no exceptional divisors. This can be seen from Remark~\ref{R:factored-morphism-formula} or directly from the fact that $\tBl_{\fa_\bullet}\AA^n$ is normal, whence smooth in codimension $1$.
\end{remark}

\begin{remark}\label{R:cms-equals-gms}
If $\fa_\bullet$ is generated in degree $\ell$, the coarse moduli space of $\tBl_{\fa_\bullet}\AA^n$ is the schematic blow-up $\bl_{\overline{\fa}_\ell}\AA^n$ of $\AA^n$ along $\overline{\fa}_\ell$. 
We claim that this coincides with the good moduli space $X_{\Sigma_{\fa_\ell}}$ of $\Bl_{\fa_\bullet}\AA^n$. The reader can check this computationally, but we propose a more direct approach. By \cite[Theorem 6.6]{alper-good-moduli-spaces}, there exists a unique morphism $\iota \colon X_{\Sigma_{\fa_\ell}} \to \bl_{\overline{\fa}_\ell}\AA^n$ making the following diagram commute: \[
    \begin{tikzcd}
    \Bl_{\fa_\bullet}\AA^n \arrow[to=2-1] \arrow[to=1-4, "\textrm{good moduli space}"] & & & X_{\Sigma_{\fa_\ell}} \arrow[to=2-4, dotted, "\iota", swap] \\
    \tBl_{\fa_\bullet}\AA^n \arrow[to=2-4, "\textrm{coarse moduli space}"] & & & \bl_{\overline{\fa}_\ell}\AA^n\rlap{.}
    \end{tikzcd}
\]
It remains to note that $\iota$ is a birational and integral morphism between normal schemes and hence an isomorphism; see \spcite{0AB1}. Indeed, $\iota$ is birational because $\Bl_{\fa_\bullet}\AA^n \to \tBl_{\fa_\bullet}\AA^n$ is an isomorphism above $\AA^n \smallsetminus V(\fa_1) \subset \AA^n$. To see that $\iota$ is integral, it suffices, by~\spcite{01WM}, to observe that: \begin{enumerate}
    \item $\iota$ is affine; indeed, for every vertex $\bv$ of $P_{\fa_\ell}$, the preimage of the coarse space of $D_+(\pmb{x}^\bv \cdot t^\ell) \subset \tBl_{\fa_\bullet}\AA^n$ is the good moduli space of $D_+(\bv) \subset \Bl_{\fa_\bullet}\AA^n$ (see Proposition~\ref{P:canonicity-of-multi-weighted-blow-up-revisited}), and they are both affine; 
    \item $\iota$ is universally closed since $\Bl_{\fa_\bullet}\AA^n \to \bl_{\overline{\fa}_\ell}\AA^n$ is universally closed by Remark~\ref{R:properties-of-factored-morphism} and \cite[Theorem 11.1.2(ii)]{olsson-algebraic-stacks}, and good moduli spaces remain surjective after any schematic base change; see \cite[Propositions 4.7(i) and 4.16(i)]{alper-good-moduli-spaces}.
\end{enumerate}
\end{remark}

\subsection{Canonicity of multi-weighted blow-ups, II}\label{3.2}

We consider in this section a slightly more general setting than in Section~\ref{3.1}. Given $0 \leq r \leq n$, we instead endow $\AA^n$ with the following logarithmic structure: \begin{align*}
    \AA^{n;r} := \Spec(\NN^r &\lra \kk[x_1,\dotsc,x_{n-r},\underline{x}_{n-r+1},\dotsc,\underline{x}_n]), \\
    \be_i &\longmapsto \underline{x}_{n-r+i}
\end{align*}
where we underline $\underline{x}_i$ for $i > n-r$ to emphasize that they are \emph{monomial coordinates} on $\AA^{n;r}$ (see Remark~\ref{R:log-coordinates}) and differentiate them from the \emph{ordinary coordinates} $x_1,\dotsc,x_{n-r}$. Note that the case $r=n$ was considered in Section~\ref{3.1}.

\begin{xpar}[Conventions]\label{X:conventions-3.2}
On $\AA^{n;r}$ we consider an ideal of the form \[
    \fj = \left(x_1^{a_1},\dotsc,x_k^{a_k},\fa\right), 
\]
where $0 \leq k \leq n-r$, $a_i \in \NN_{>0}$, and $\fa$ is a monomial ideal on $\AA^{n;r}$ in the sense of Section~\ref{X:monomiality}; \textit{i.e.} $\fa$ is generated by monomials in $\underline{x}_{n-r+1},\dotsc,\underline{x}_n$.

We set $\ell := \lcm(a_1,\dotsc,a_k)$ (this is $1$ if $k = 0$),
set $w_i := \frac{\ell}{a_i}$ for $1 \leq i \leq k$, and set $\underline{\fj}_\bullet := \fj^{1/\ell} := (x_1^{1/w_1},\dotsc,x_k^{1/w_k},\fa^{1/\ell})$; \textit{i.e.} the integral closure in $\sO_{\AA^n}[t]$ of the $\sO_{\AA^n}$-subalgebra generated by $\lbrace x_i \cdot t^{w_i} \colon 1 \leq i \leq k \rbrace$ and $\fa \cdot t^\ell$; see \cite[Notation 2.12]{quek-weighted-log-resolution}. Then $\underline{\fj}_\bullet$ is an integrally closed ($\NN$-graded) Rees algebra on $\AA^n$. Analogously to Definition~\ref{D:weighted-monoidal-blow-up}, we have the following. 
\end{xpar}

\begin{definition}\label{D:weighted-toroidal-blow-up}
The \emph{weighted toroidal blow-up} of $\AA^{n;r}$ along $\underline{\fj}_\bullet$ is the stack-theoretic Proj over $\AA^{n;r}$: \[
    \tBl_{\underline{\fj}_\bullet}\AA^{n;r} := \SProj_{\AA^n}\left(\underline{\fj}_\bullet\right) := \left[\GSpec_{\AA^n}\left(\underline{\fj}_\bullet\right) \smallsetminus V\left(\underline{\fj}_+\right) \q \Gm\right] \xrightarrow{\varpi_{\underline{\fj}_\bullet}} \AA^{n;r}, 
\]
where $\underline{\fj}_+$ is the ideal generated by the positive degrees of $\underline{\fj}_\bullet$ and the $\Gm$-action is induced by the $\ZZ$-grading on $\underline{\fj}_\bullet$. Analogously to Section~\ref{X:logarithmic-structure-of-weighted-monoidal-blow-up}, this is a toroidal Deligne--Mumford stack under the logarithmic structure dictated by that of $\AA^{n;r}$ and the exceptional divisor; see Section~\ref{X:logarithmic-structure-of-weighted-toroidal-blow-up} or \cite[Section 4]{quek-weighted-log-resolution}.
\end{definition}

\begin{xpar}[Charts of $\tBl_{\underline{\fj}_\bullet}\AA^{n;r}$]\label{X:charts-for-weighted-toroidal-blow-up}
By the same explanation as in Section~\ref{X:charts-for-weighted-monoidal-blow-up}, $\tBl_{\underline{\fj}_\bullet}\AA^{n;r}$ is covered by the following charts: \begin{enumerate}\itemsep.5em 
    \item $D_+\left(x_i \cdot t^{w_i}\right) = \left[\GSpec_{\AA^n}\left(\underline{\fj}_\bullet\left[(x_i \cdot t^{w_i})^{-1}\right]\right) \q \Gm\right]$ for $1 \leq i \leq k$ and
    \item $D_+\left(\underline{\pmb{x}}^{\bv} \cdot t^\ell\right) = \left[\GSpec_{\AA^n}\left(\underline{\fj}_\bullet\left[\left(\underline{\pmb{x}}^\bv \cdot t^\ell\right)^{-1}\right]\right) \q \Gm\right]$ for vertices $\bv$ of $P_\fa$.
\end{enumerate}
Similarly to before, $V(t^{-1})$ is the exceptional divisor on each chart.
\end{xpar}

\begin{xpar}[Logarithmic structure on $\tBl_{\underline{\fj}_\bullet}\AA^{n;r}$]\label{X:logarithmic-structure-of-weighted-toroidal-blow-up}
Let $\Upgamma$ denote the saturation of the submonoid of $\NN^n \oplus \ZZ$ generated by $\NN^{n+1}$, $(\be_i,-w_i)$ for $1 \leq i \leq k$, and $(\bv,-\ell)$ for vertices $\bv$ of $P_\fa$. Then the assignment \[
    \be_i \longmapsto \begin{cases}
       x_i \quad &\textrm{for } 1 \leq i \leq n, \\
        t^{-1} \quad &\textrm{for } i = r+1
    \end{cases}
\]
defines the following isomorphisms: \begin{equation}\label{E:logarithmic-structure-of-weighted-toroidal-blow-up}
    \begin{tikzcd}
    \kk[\NN^n \oplus \ZZ] \arrow[to=1-2, "\simeq"] & \sO_{\AA^n}[t^\pm] \\
    \kk[\Upgamma] \arrow[to=1-1, hookrightarrow] \arrow[to=2-2, "\simeq"] & \underline{\fj}_\bullet^\extd\rlap{,} \arrow[to=1-2, hookrightarrow]
    \end{tikzcd}
\end{equation}
where $\underline{\fj}_\bullet^\extd$ denotes the extended Rees algebra associated to $\underline{\fj}_\bullet$; see Remark~\ref{R:extended-rees-algebra}. Next, consider the following submonoids of $\Upgamma$: \begin{enumerate}
    \item $\Upgamma_1$ denotes the submonoid generated by $(\be_i,-w_i)$ for $1 \leq i \leq k$ and $(\be_i,0)$ for $k < i \leq n-r$.
    \item $\Upgamma_2$ denotes the saturation of the submonoid generated by $\NN^{r+1}$ and $(\bv,-\ell)$ for every vertex $\bv$ of $P_\fa$.
\end{enumerate}
Then $\Upgamma_2 \inj \kk[\Upgamma]$ defines, via \eqref{E:logarithmic-structure-of-weighted-toroidal-blow-up}, a logarithmic structure on $\GSpec_{\AA^n}(\underline{\fj}_\bullet^\extd) \smallsetminus V(\underline{\fj}_+^\extd) \subset \GSpec_{\AA^n}(\underline{\fj}_\bullet^\extd)$, which descends to the logarithmic structure on $\tBl_{\underline{\fj}_\bullet}\AA^{n;r} = \left[\GSpec_{\AA^n}(\underline{\fj}_\bullet^\extd) \smallsetminus V(\underline{\fj}_+^\extd) \smallsetminus \Gm\right]$. To see that $\tBl_{\underline{\fj}_\bullet}\AA^{n;r}$ is toroidal, it suffices to observe that $\Upgamma = \Upgamma_1 \oplus \Upgamma_2$ and moreover that $\Upgamma_1$ is a free monoid of rank $n-r$ whose free generators form part of a basis of $\Upgamma^\gp = \ZZ^{n+1}$.
\end{xpar}

\begin{remark}
Let us remark on our choice to work with $\underline{\fj}_\bullet^\extd$ instead of $\underline{\fj}_\bullet$ in Section~\ref{X:logarithmic-structure-of-weighted-toroidal-blow-up}. One can still define, in a similar fashion, a logarithmic structure on $\GSpec_{\AA^n}(\underline{\fj}_\bullet) \smallsetminus V(\underline{\fj}_+) \subset \GSpec_{\AA^n}(\underline{\fj}_\bullet)$ which descends to the same logarithmic structure on $\tBl_{\underline{\fj}_\bullet}\AA^{n;r}$. However, it is not evident, without passing to charts, that the logarithmic structure on $\GSpec_{\AA^n}(\underline{\fj}_\bullet) \smallsetminus V(\underline{\fj}_+)$ is toroidal. This is a second way in which $\underline{\fj}_\bullet^\extd$ ``globalizes'' $\underline{\fj}_\bullet$; see Remark~\ref{R:extended-rees-algebra}.
\end{remark}

\begin{xpar}[Logarithmic structure on $\Bl_{\fj}\AA^n$]\label{X:logarithmic-structure-of-multi-weighted-blow-up}
On the other hand, we may also consider $\Bl_{\fj}\AA^n$ as defined in Section~\ref{1.1}. For the next proposition, we endow $\Bl_{\fj}\AA^n$ with the toroidal logarithmic structure obtained by descent from the following toroidal logarithmic structure on $\AA^{\Sigma_\fj(1)} \smallsetminus V(J_{\Sigma_\fj})$: \[
    \NN^r \oplus \NN^{\bE(\fj)} \lra \kk[x_1',\dotsc,x_{n-r}',\underline{x}_{n-r+1}',\dotsc,\underline{x}_n']\left[\underline{x}_\rho' \colon \rho \in \bE(\fj)\right], 
\]
which sends $\be_i$ to $\underline{x}_{n-r+i}'$ for $1 \leq i \leq r$, and $\be_\rho$ to $\underline{x}_\rho'$ for $\rho \in \bE(\fj)$. We denote by $\Bl_{\fj}\AA^{n;r}$ the resulting logarithmic Artin stack. Using the language introduced in Section~\ref{X:satriano-construction}, we can now state the main objective of this section. 
\end{xpar}

\begin{proposition}\label{P:canonicity-of-multi-weighted-blow-up-II}
  The object   $\Bl_{\fj}\AA^{n;r}$ is the canonical smooth, toroidal Artin stack over $\tBl_{\underline{\fj}_\bullet}\AA^{n;r}$.
\end{proposition}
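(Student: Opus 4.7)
The plan is to adapt the proof of Proposition~\ref{P:canonicity-of-multi-weighted-blow-up} to the partially logarithmic setting, following the same three-step blueprint: globalize, compute Satriano's minimal free resolution, and verify chart-by-chart. The crucial new ingredient is the splitting $\Gamma = \Gamma_1 \oplus \Gamma_2$ from \ref{X:logarithmic-structure-of-weighted-toroidal-blow-up}, which cleanly segregates the ``ordinary'' from the ``toroidal'' directions.

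First, mirroring \ref{X:step-1}, I would view $\tBl_{\underline{\fj}_\bullet}\AA^{n;r}$ as a strict, open substack of the larger toroidal Artin stack
\[
    \fM := \left[\GSpec_{\AA^n}\bigl(\Gamma \inj \kk[\Gamma] \xrightarrow[\eqref{E:logarithmic-structure-of-weighted-toroidal-blow-up}]{\simeq} \underline{\fj}_\bullet^\extd\bigr) \q \Gm\right].
\]
Since Satriano's construction of the canonical smooth, toroidal Artin stack is local in the smooth strict topology and compatible with strict open immersions (Remark~\ref{R:satriano-construction}), it suffices to compute it for $\fM$ and then restrict.

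Second, because $\Gamma_1$ is a free monoid whose generators extend to a basis of $\Gamma^\gp = \ZZ^{n+1}$, the minimal free resolution of $\Gamma$ is the identity on $\Gamma_1$ direct-summed with the minimal free resolution $\iota_2 \colon \Gamma_2 \inj F_2$ of the sharp toric monoid $\Gamma_2$. In particular, Satriano's construction leaves the $\Gamma_1$-directions untouched and performs its non-trivial work only in the $\Gamma_2$-directions. The extremal rays of $C(\Gamma_2)^\vee \subset \RR^{r+1}$ can then be enumerated by exactly the same geometric analysis as in Steps~\ref{X:step-1}--\ref{X:step-2} of the proof of Proposition~\ref{P:canonicity-of-multi-weighted-blow-up}: they correspond bijectively to $\lbrace \be_j \colon n-r < j \leq n,\ N_j(\fa) > 0\rbrace$ together with appropriately re-scaled lifts $\widetilde{\bu}_\rho$ of $\bu_\rho$ for $\rho \in \Sigma_\fa(1)$.

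Third, I would translate the resulting stacky cone data into an explicit presentation of Satriano's canonical stack over $\fM$, and restrict to each of the two families of charts of $\tBl_{\underline{\fj}_\bullet}\AA^{n;r}$ listed in \ref{X:charts-for-weighted-toroidal-blow-up}. The charts $D_+(\underline{\pmb{x}}^{\bv_\sigma} \cdot t^\ell)$ indexed by vertices $\bv_\sigma$ of $P_\fa$ are handled essentially verbatim from \ref{X:step-3}, producing the chart $D_+(\sigma) \subset \Bl_{\fj}\AA^{n;r}$ for each maximal cone $\sigma$ of $\Sigma_\fa$. The main obstacle, which has no analogue in \S\ref{3.1}, will be the new charts $D_+(x_i \cdot t^{w_i})$ for $1 \leq i \leq k$: here the distinguished generator lies in the free factor $\Gamma_1$, so Satriano's contribution on this chart essentially amounts to an iterated root-stacking along the ordinary coordinate direction $x_i$. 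A direct calculation, again invoking \cite[Lemma 1.3.1]{quek-rydh-weighted-blow-up} to eliminate redundant free directions just as in the proof of Proposition~\ref{P:canonicity-of-multi-weighted-blow-up-revisited}, should identify this with the chart $D_+(\sigma) \subset \Bl_{\fj}\AA^{n;r}$ corresponding to the maximal cone $\sigma \in \Sigma_\fj$ associated to the vertex $a_i \be_i$ of $P_\fj$. Patching these charts, together with the canonicity of Satriano's construction, yields the proposition.
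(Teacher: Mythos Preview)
Your blueprint correctly identifies the splitting $\Gamma = \Gamma_1 \oplus \Gamma_2$ as the key, but the paper's route diverges from yours in Step~3: it does \emph{not} redo a chart-by-chart verification. Instead, it first establishes a numerical lemma (Lemma~\ref{L:numerical-data-for-j}) about the facets of $P_\fj$: for $\rho \in \bE(\fj)$ one has $\ell \mid N_\rho(\fj)$, $a_i\uu_{\rho,i} = N_\rho(\fj)$ for $1 \leq i \leq k$, and $\uu_{\rho,i} = 0$ for $k < i \leq n-r$. With this in hand, the paper computes Satriano's stack $\sM$ over the \emph{full toric} $\fM$ (log structure $\Gamma$), uses the abstract splitting argument of \ref{X:satriano-construction-for-toroidal-stacks} to see that $\sM' \to \fM'$ is schematically identical to $\sM \to \fM$, and then observes that the restriction to $\tBl_{\underline{\fj}_\bullet}\AA^{n;r}$ is therefore schematically the canonical stack over $\tBl_{\underline{\fj}_\bullet}\AA^{n;n}$ --- which Proposition~\ref{P:canonicity-of-multi-weighted-blow-up} already computed. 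Lemma~\ref{L:numerical-data-for-j}(i) is exactly what makes the $\gcd$ factors in Definition~\ref{D:multi-weighted-blow-up-along-monomial-rees-algebras} trivial, so that this canonical stack equals $\Bl_\fj\AA^n$ on the nose rather than a generalized multi-weighted blow-up.

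Your Step~2 instead computes the extremal rays of $C(\Gamma_2)^\vee$, which are indexed by $\Sigma_\fa$-data (plus possibly some $j$ with $N_j(\fa) > 0$), whereas $\Bl_\fj\AA^{n;r}$ is built from $\Sigma_\fj$-data. Matching these two indexings --- in particular matching the group action and the irrelevant ideal --- requires precisely the content of Lemma~\ref{L:numerical-data-for-j} and Remark~\ref{R:numerical-data-for-j} (the bijection $\bE(\fj) \xrightarrow{\simeq} \bE^+(\fa)$ and the explicit formulas relating $\uu_{\rho,i}$, $N_\rho(\fj)$ to the $\fa$-data), which you do not identify. Your chart-by-chart plan could be made to work once this is supplied, but it is strictly more laborious than the paper's reduction. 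Separately, your description of the $D_+(x_i \cdot t^{w_i})$ charts as ``iterated root-stacking along the ordinary coordinate direction $x_i$'' is off: Satriano's construction acts only on the log structure $\Gamma_2$, which does not involve $x_i$. What actually happens (cf.\ \ref{X:II-step-2}) is that $(\be_i,-w_i) \in \Gamma_1$ maps to $\be_i$ under $\eta^\vee$, so $x_i \cdot t^{w_i} \mapsto x_i'$; inverting it simply selects the chart $D_+(x_i')$ of $\Bl_\fj\AA^{n;r}$, with Satriano's nontrivial work in the $\Gamma_2$-directions being uniform across all charts.
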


We prove this via Proposition~\ref{P:canonicity-of-multi-weighted-blow-up} and the following digression. 

\begin{xpar}\label{X:satriano-construction-for-toroidal-stacks}
We return to the discussion in Section~\ref{X:satriano-construction-for-toric-stacks}. Adopting the notation there, we suppose further that $N^\vee = N_1^\vee \oplus N_2^\vee$ for sublattices $N_i^\vee \subset N^\vee$, and hence $\Upgamma = \Upgamma_1 \oplus \Upgamma_2$ for the submonoids $\Upgamma_i := \Upgamma \cap N_i^\vee \subset \Upgamma$ such that $\Upgamma_1$ is a free monoid of finite rank satisfying $\Upgamma_1^\gp = N_1^\vee$; \textit{i.e.} its free generators form a basis of $N_1^\vee$. We redefine $Y$ as \[
    Y := \left[\Spec(\Upgamma_2 \longinj \kk[\Upgamma]) \q G\right], 
\]
which is a toroidal Artin stack by hypothesis. As before, our goal here is to explicate the canonical smooth, toroidal Artin stack $\sY$ over $Y$. 

Recall that in Section~\ref{X:satriano-construction-for-toric-stacks}, $F$ denotes the free monoid on the set $S$ of extremal rays $\rho$ of $\sigma$, $N_F^\vee$ denotes $F^\gp$, and we defined an embedding of lattices $\eta^\vee \colon N^\vee \inj N_F^\vee$ which restricts to a minimal free resolution $\iota \colon \Upgamma \inj F$. We \emph{\underline{claim}} that there exist free submonoids $F_i \subset F$ such that the following hold:  \begin{enumerate}
    \item\label{claim-1} $F = F_1 \oplus F_2$.
    \item\label{claim-2} Set $N_{F_i}^\vee := F_i^\gp$. Then $\eta^\vee \colon N^\vee \xrightarrow{\lowsimeq} N_F^\vee$ decomposes as $\eta_1^\vee \oplus \eta_2^\vee$, where $\eta_1^\vee = \eta^\vee|_{N_1^\vee} \colon N_1^\vee \xrightarrow{\lowsimeq} N_{F_1}^\vee$, which restricts to $\iota_1 \colon \Upgamma_1 \xrightarrow{\lowsimeq} F_1$, and $\eta_2^\vee = \eta^\vee|_{N_2^\vee} \colon N_2^\vee \inj N_{F_2}^\vee$, which restricts to a minimal free resolution $\iota_2 \colon \Upgamma_2 \inj F_2$. Moreover, $\iota = \iota_1 \oplus \iota_2$.
\end{enumerate}
Combining this claim with \eqref{E:satriano-construction-for-toric-stacks} yields the logarithmically smooth morphism \[
    \left[\Spec(F_2 \longinj \kk[F]) \q G_F\right] \lra \left[\Spec(\Upgamma_2 \longinj \kk[\Upgamma]) \q G\right] 
\]
and moreover shows that it is $\sY \to Y$. 

\begin{proof}[Proof of the \underline{claim}]
For $i=1,2$, let $\sigma_i$ denote the dual cone in $N$ of $C(\Upgamma_i) \subset N_i^\vee \subset N^\vee$, and let $\sigma_i'$ denote the dual cone in $N_i$ of $C(\Upgamma_i) \subset N_i^\vee$. Since $\Upgamma = \Upgamma_1 \oplus \Upgamma_2 \subset N_1^\vee \oplus N_2^\vee = N^\vee$, we have $\sigma = \sigma_1 \cap \sigma_2$ with \[
    \sigma_1 = \sigma_1' \oplus N_2^\vee \quad \textrm{and} \quad \sigma_2 = N_1^\vee \oplus \sigma_2'.
\]
Thus, we may decompose $S = S_1 \sqcup S_2$, where $S_i$ is the set of extremal rays of $\sigma_i$. For $i=1,2$, let $F_i$ denote the free monoid on $S_i$. Then part \eqref{claim-1} is immediate, while part \eqref{claim-2}  follows from the definition of $\eta^\vee$ (in Section~\ref{X:satriano-construction-for-toric-stacks}), together with the following pair of observations: \begin{enumerate}
    \item $\lbrace \bu_\rho \colon \rho \in S_1 \rbrace = \lbrace (\bu_{\overline{\rho}},0) \colon \overline{\rho}$ extremal ray of $\sigma_1' \rbrace$,
    \item $\lbrace \bu_\rho \colon \rho \in S_2 \rbrace = \lbrace (0,\bu_{\overline{\rho}}) \colon \overline{\rho}$ extremal ray of $\sigma_2' \rbrace$,
\end{enumerate}
where $\bu_{\overline{\rho}}$ denotes the first lattice point on $\overline{\rho}$.
\end{proof}
\end{xpar}

\begin{proof}[Proof of Proposition~\ref{P:canonicity-of-multi-weighted-blow-up-II}]
It suffices to assume $k \geq 1$, or else we are in the situation of Proposition~\ref{P:canonicity-of-multi-weighted-blow-up}. We may also assume $\fa \neq 0$, or else $\tBl_{\underline{\fj}_\bullet}\AA^n$ is already smooth over $\kk$ and is equal to $\Bl_\fj\AA^n$; see Example~\ref{EX:weighted-blow-ups}. Our approach is to first revisit Section~\ref{X:step-2} for the toric Artin stack \[
    \fM := \left[\GSpec_{\AA^n}\left(\Upgamma \longinj \kk[\Upgamma] \xrightarrow[\eqref{E:logarithmic-structure-of-weighted-toroidal-blow-up}]{\lowsimeq} \underline{\fj}_\bullet^\extd\right) \q \Gm\right]
\]
before using Section~\ref{X:satriano-construction-for-toroidal-stacks} to deduce the canonical smooth, toroidal Artin stack over \[
    \fM' := \left[\GSpec_{\AA^n}\left(\Upgamma_2 \longinj \kk[\Upgamma] \xrightarrow[\eqref{E:logarithmic-structure-of-weighted-toroidal-blow-up}]{\lowsimeq} \underline{\fj}_\bullet^\extd\right) \q \Gm\right], 
\]
which contains $\tBl_{\underline{\fj}_\bullet}\AA^{n;r}$ as a strict, open substack.

\begin{xpar}[Step 1]\label{X:II-step-1}
Before revisiting Section~\ref{X:step-2} for $\fM$, let us first establish the following lemma. 

\begin{lemma}\label{L:numerical-data-for-j}
Assume $k \geq 1$ and $\fa \neq 0$. For any $\rho \in \bE(\fj)$,  \begin{enumerate}
    \item\label{L:ndfj-1} $\ell$ divides $N_\rho(\fj)$; 
    \item\label{L:ndfj-2} the corresponding facet $H_\rho$ of\, $P_\fj$ contains the vertices $\lbrace a_i \cdot \be_i^\vee \colon 1 \leq i \leq k \rbrace$; in other words, $a_i \cdot \uu_{\rho,i} = N_\rho(\fj)$ for every $1 \leq i \leq k$; 
    \item\label{L:ndfj-3} $\uu_{\rho,i} = 0$ for every $k < i \leq n-r$.
\end{enumerate}
\end{lemma}

\begin{proof}
Let $\rho \in \bE(\fj)$. Let $H_\rho$ denote the corresponding facet of $P_\fj$, whose affine span is given by $\sum_{i=1}^n{\uu_{\rho,i} \cdot \be_i} = N_\rho(\fj)$. 

On one hand, note that $P_\fj \cap \lbrace \be_{n-r+1} = \dotsb = \be_n = 0 \rbrace$ is the Newton polyhedron $P_\fx$ of $\mathfrak{x} := (x_1^{a_1},\dotsc,x_k^{a_k}) \subset \kk[x_1,\dotsc,x_{n-r}]$ and that there is only one ray $\widetilde{\rho} \in \bE^+(\fx)$, whose corresponding facet $H_{\widetilde{\rho}}$ of $P_\fx$ has the affine span $\sum_{i=1}^k{\frac{\ell}{a_i} \cdot \be_i} = \ell$. 

On the other hand, $H_\rho \cap \lbrace \be_{n-r+1} = \dotsb = \be_n = 0 \rbrace$ is a facet of $P_\fx$ whose affine span is $\sum_{i=1}^{n-r}{\uu_{\rho,i} \cdot \be_i} = N_\rho(\fj)$. Since $\rho \in \bE(\fj)$, we must have $N_\rho(\fj) > 0$, so the facet of $P_\fx$ in the preceding sentence must be $H_{\widetilde{\rho}}$ in the preceding paragraph. By comparing equations and noting that $\gcd\left(\frac{\ell}{a_i} \colon 1 \leq i \leq k \right) { = 1}$, part~\eqref{L:ndfj-1} follows. Parts~\eqref{L:ndfj-2} and~\eqref{L:ndfj-3} are now also immediate.
\end{proof}

Let us note in addition that since $k \geq 1$ and $\fa \neq 0$, then $\bE^+(\fj) = \bE(\fj)$; \textit{i.e.}~$N_i(\fj) = 0$ for all $i \in [1,n]$. Therefore, combining Section~\ref{X:step-2} with the above observations, we see that the canonical smooth, toroidal Artin stack $\sM$ over $\fM$ arises from the stacky cone $(\sigma_{\std},\beta_F)$, where $\sigma_{\std}$ is the standard cone on $\ZZ^{\Sigma_\fj(1)}$ and the dual of $\beta_F$ fits in the following commutative diagram: \begin{equation}\label{E:II-step-1}
    \begin{tikzcd}
    0 \arrow[to=1-2] & \ZZ^n \arrow[to=2-2, equal] \arrow[to=1-3, hookrightarrow, "\beta_F^\vee"] & \ZZ^{\Sigma_\fj(1)} \arrow[to=1-4] & \Coker\left(\beta_F^\vee\right) \arrow[to=1-5] & 0 \\
    0 \arrow[to=2-2] & \ZZ^n \arrow[to=2-3, hookrightarrow, "\beta^\vee"] & \ZZ^{n+1} \arrow[to=1-3, hookrightarrow, swap, "\eta^\vee"] \arrow[to=2-4] & \ZZ \arrow[to=2-5] \arrow[to=1-4, hookrightarrow] & 0\rlap{.}
    \end{tikzcd}
\end{equation}
Here, the matrix of $\eta^\vee$ has rows given by $\left(\bu_\rho,\frac{N_\rho(\fj)}{\ell}\right)$ for $\rho \in \Sigma_\fj(1)$, and the matrix of $\beta_F^\vee$ has rows given by $\bu_\rho$ for $\rho \in \Sigma_\fj(1)$. Explicitly,  \[
    \sM = \left[\Spec\left(\NN^{\Sigma_\fj(1)} \lra \kk\left[\underline{x}_\rho' \colon \rho \in \Sigma_\fj(1)\right]\right) \q D\left(\Coker\left(\beta_F^\vee\right)\right)\right], 
\]
and $\sM \to \fM$ is induced by \begin{equation}\label{E:II-fM-to-sM}
    x_i \longmapsto x_i \cdot \prod_{\rho \in \bE(\fj)}{\left(x_\rho'\right)^{\uu_{\rho,i}}} \quad \text{and} \quad t^{-1} \longmapsto \prod_{\rho \in \bE(\fj)}{\left(x_\rho'\right)^{\frac{N_\rho(\fj)}{\ell}}}.
\end{equation}
\end{xpar}

\begin{xpar}[Step 2]\label{X:II-step-2}
By Lemma~\ref{L:numerical-data-for-j}\eqref{L:ndfj-2}, we have, for every $1 \leq i \leq k$,  \[
    \eta^\vee(\be_i,-w_i) \; = \;\be_i + \sum_{\rho \in \bE(\fj)}{\left(\uu_{\rho,i} - \frac{N_\rho(\fj)}{a_i}\right) \cdot \be_\rho} \; = \; \be_i;  
\]
\textit{i.e.} \eqref{E:II-fM-to-sM} maps $x_i \cdot t^{w_i}$ to $x_i'$. By part \eqref{L:ndfj-3} of the same lemma, we have, for every $k < i \leq n-r$, $\eta^\vee(\be_i,0) = \be_i$; \textit{i.e.} \eqref{E:II-fM-to-sM} maps $x_i$ to $x_i'$. Therefore, $\eta^\vee$ maps $\Upgamma_1$ isomorphically onto $\NN^{[1,n-r]} \subset \ZZ^{\Sigma_\fj(1)}$. 

On the other hand, it is plain that $\eta^\vee$ maps $\Upgamma_2$ into $\NN^{[n-r+1,n]} \oplus \NN^{\bE(\fj)} \subset \ZZ^{\Sigma_\fj(1)}$. By Section~\ref{X:satriano-construction-for-toroidal-stacks}, we know $\eta^\vee|_{\Upgamma_2}$ is a minimal free resolution of $\Upgamma_2$, and the canonical smooth, toroidal Artin stack $\sM'$ over $\fM'$ is the stack quotient of \[
\Spec\left(\NN^{[n-r+1,n]} \oplus \NN^{\bE(\fj)} \lra \kk[x_1',\dotsc,x_{n-r}',\underline{x}_{n-r+1}',\dotsc,\underline{x}_n']\left[\underline{x}_\rho' \colon \rho \in \bE(\fj)\right]\right) 
\]
by the action of $D(\Coker(\beta)^\vee) \subset D(\ZZ^{\Sigma_\fj(1)}) = T_{\ZZ^{\Sigma_\fj(1)}}$.
\end{xpar}

\begin{xpar}[Step 3]\label{X:II-step-3}
By Remark~\ref{R:satriano-construction}, the canonical smooth, toroidal Artin stack over $\tBl_{\underline{\fj}_\bullet}\AA^{n;r}$ is \[
    \tBl_{\underline{\fj}_\bullet}\AA^{n;r} \times_{\fM'} \sM' \xhookrightarrow{\textrm{\tiny strict, open}} \sM', 
\]
which is schematically identical\footnote{Schematically (\textit{i.e.} forgetting any logarithmic structures), the morphisms $\sM \to \fM$ and $\sM' \to \fM'$ are identical, and so are $\tBl_{\underline{\fj}_\bullet}\AA^{n;n}$ and $\tBl_{\underline{\fj}_\bullet}\AA^{n;r}$.} to $\tBl_{\underline{\fj}_\bullet}\AA^{n;n} \times_{\fM} \sM$, \textit{i.e.} the canonical smooth, toroidal Artin stack over $\tBl_{\underline{\fj}_\bullet}\AA^{n;n}$. By Proposition~\ref{P:canonicity-of-multi-weighted-blow-up} and Lemma~\ref{L:numerical-data-for-j}\eqref{L:ndfj-1}, the latter is $\Bl_\fj\AA^{n;n}$. Therefore, the former is $\Bl_\fj\AA^n$ with the logarithmic structure induced from that of $\sM'$, \textit{i.e.} $\Bl_\fj\AA^{n;r}$.
\end{xpar}
\end{proof}

\begin{remark}\label{R:II-properties-of-factored-morphism}
The morphism $\Bl_{\fj}\AA^{n;r} \to \tBl_{\underline{\fj}_\bullet}\AA^{n;r}$ is logarithmically smooth. Of course, it also satisfies all the schematic properties listed in Remarks~\ref{R:properties-of-factored-morphism} and~\ref{R:cms-equals-gms}.
\end{remark}

\begin{remark}\label{R:numerical-data-for-j}
It is possible to say more in Lemma~\ref{L:numerical-data-for-j}. First we note that $P_\fj \cap \lbrace \be_1 = \dotsb = \be_{n-r} = 0 \rbrace$ is the Newton polyhedron $P_\fa$ of $\fa \subset \kk[x_{n-r+1},\dotsc,x_n]$. Correspondingly, for $\rho \in \bE(\fj)$, $H_\rho \cap \lbrace \be_1 = \dotsb = \be_{n-r} = 0 \rbrace$ must be a facet $H_{\overline{\rho}}$ of $P_\fa$ (for some $\overline{\rho} \in \Sigma_\fa(1)$). Moreover, since $N_\rho(\fj) > 0$, we must have $N_{\overline{\rho}}(\fa) > 0$. Then $\rho \mapsto \overline{\rho}$ sets up a one-to-one correspondence $\bE(\fj) = \bE^+(\fj) \xrightarrow{\lowsimeq} \bE^+(\fa)$. Through this correspondence, Lemma~\ref{L:numerical-data-for-j} can be supplemented as follows: \begin{align*}
        \uu_{\rho,i} &= \frac{\ell}{\gcd(\ell,N_{\overline{\rho}}(\fa))} \cdot \uu_{\overline{\rho},i} \quad \textrm{for } n-r < i \leq n,  \\
        N_\rho(\fj) &= \frac{\ell N_{\overline{\rho}}(\fj)}{\gcd(\ell,N_{\overline{\rho}}(\fa))}.
\end{align*}
In particular, the number $\frac{N_\rho(\fj)}{\ell}$ in \eqref{E:II-fM-to-sM} is equal to $\frac{N_{\overline{\rho}}(\fa)}{\gcd(\ell,N_{\overline{\rho}}(\fa))}$.
\end{remark}

\begin{corollary}\label{C:proper-transform-of-first-coordinate-hyperplane}
Suppose that $a_1$ divides $\lcm(a_2,\dotsc,a_k)$ $($this is $1$ if\, $k=1)$. Let $\AA^{n-1;r} = V(x_1) \subset \AA^{n;r}$, and set $\fj_1 := \fj|_{V(x_1)} = (x_2^{a_2},\dotsc,x_k^{a_k},\fa) \subset \kk[x_2,\dotsc,x_{n-r},\underline{x}_{n-r+1},\underline{x}_n]$. Then the proper transform $V(x_1')$ of\, $V(x_1) \subset \AA^{n;r}$ under $\pi_\fj \colon \Bl_\fj\AA^{n;r} \to \AA^{n;r}$ is canonically identified with $\pi_{\fj_1} \colon \Bl_{\fj_1}\AA^{n-1;r}$ $\to \AA^{n-1;r}$.
\end{corollary}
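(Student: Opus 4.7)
The plan is to identify the proper transform of $V(x_1)\subset\AA^{n;r}$ under $\pi_\fj$ with the closed substack $V(x_1')\subset\Bl_\fj\AA^{n;r}$, then match $V(x_1')$ chart-by-chart with $\Bl_{\fj_1}\AA^{n-1;r}$ via the presentation in \ref{X:explicating-multi-weighted-blow-ups}. Under the hypothesis, $\ell:=\lcm(a_1,\dotsc,a_k)=\lcm(a_2,\dotsc,a_k)$, so both $\fj$ and $\fj_1$ share the common denominator $\ell$, and we may assume $\fa\neq 0$ (otherwise we are in the smooth setting of Example~\ref{EX:weighted-blow-ups} and the claim is immediate). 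In particular $k\ge 1$ and $\fa\neq 0$, so $\bE^+(\fj)=\bE(\fj)$ by \ref{X:II-step-1}.

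By Definition~\ref{D:proper-transform} and Proposition~\ref{P:total-transform-of-monomial-ideal}, the proper transform of $V(x_1)$ is obtained by saturating $\pi_\fj^{-1}(x_1)\cdot\sO=(x_1')\prod_{\rho\in\bE(\fj)}(x_\rho')^{\uu_{\rho,1}}$ by $\prod_{\rho\in\bE(\fj)}(x_\rho')^\infty$; as $\uu_{\rho,1}=N_\rho(\fj)/a_1>0$ by Lemma~\ref{L:numerical-data-for-j}(ii), this equals $(x_1')$. The key combinatorial step is to exhibit a bijection $\bE(\fj)\xrightarrow{\sim}\bE(\fj_1)$, $\rho\mapsto\bar\rho$, with $\bu_{\bar\rho}=(\uu_{\rho,2},\dotsc,\uu_{\rho,n})$ primitive in $\ZZ^{n-1}$ and $N_{\bar\rho}(\fj_1)=N_\rho(\fj)$. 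Writing $m_\rho:=N_\rho(\fj)/\ell$ (integral by Lemma~\ref{L:numerical-data-for-j}(i)) and $w_i:=\ell/a_i$, Lemma~\ref{L:numerical-data-for-j}(ii)--(iii) gives $\uu_{\rho,i}=m_\rho w_i$ for $1\le i\le k$ and $\uu_{\rho,i}=0$ for $k<i\le n-r$. The hypothesis forces $\gcd(w_2,\dotsc,w_k)=1$, hence $d_\rho:=\gcd(\uu_{\rho,2},\dotsc,\uu_{\rho,n})$ divides $m_\rho$; combined with $\uu_{\rho,1}=m_\rho w_1$ and primitivity $\gcd(\uu_{\rho,1},d_\rho)=1$, this forces $d_\rho=1$. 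The identification $P_\fj\cap\{\be_1=0\}=\{0\}\times P_{\fj_1}$ (immediate from the equivalence $\pmb{x}^\bb\in\fj$ with $b_1=0\iff\pmb{x}^\bb\in\fj_1$) then matches facets of $P_\fj$ meeting $\{\be_1=0\}$ in codimension one with the facets of $P_{\fj_1}$, yielding the claimed bijection; an analogous argument handles the standard rays in $[2,n-r]$.

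To patch, observe that maximal cones $\sigma$ of $\Sigma_\fj$ not containing the standard ray $1$ give charts on which $x_1'$ is invertible and so contribute nothing to $V(x_1')$. Maximal cones $\sigma$ containing the standard ray $1$ correspond to vertices $\bv_\sigma\in P_\fj\cap\{\be_1=0\}$, hence to vertices $\bv_{\sigma_1}$ of $P_{\fj_1}$ and thus to maximal cones $\sigma_1$ of $\Sigma_{\fj_1}$; on each such chart $x_\sigma'$ does not involve $x_1'$, so setting $x_1'=0$ recovers exactly the $x_{\sigma_1}'$-chart of $\Bl_{\fj_1}\AA^{n-1;r}$, and the $\Gm^{\bE(\fj)}$-weights on $(x_i')_{i\ge 2}$ and $(x_\rho')$ read off \ref{X:explicating-multi-weighted-blow-ups}(ii) match the $\Gm^{\bE(\fj_1)}$-weights on $(x_i')_{i\ge 2}$ and $(x_{\bar\rho}')$ via the bijection of Step 2. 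The logarithmic structures agree because, per \ref{X:logarithmic-structure-of-multi-weighted-blow-up}, both are dictated by the exceptional divisors and the monomial coordinates, which are identified by the same bijection. The main obstacle is the primitivity computation $d_\rho=1$: the hypothesis $a_1\mid\lcm(a_2,\dotsc,a_k)$ is precisely what it takes, and without it the projected vectors scale, producing an extra root-stack structure along each exceptional divisor in the spirit of Remark~\ref{R:generalized-multi-weighted-blow-up}.
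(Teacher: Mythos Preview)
Your approach is genuinely different from the paper's. The paper argues abstractly: it shows (in \ref{X:II-step-2}) that the strict closed substack $V(x_1\cdot t^{w_1})\subset \tBl_{\underline{\fj}_\bullet}\AA^{n;r}$ pulls back along $\zeta$ to $V(x_1')$, so by Remark~\ref{R:satriano-construction} the latter is the canonical smooth toroidal Artin stack over $V(x_1\cdot t^{w_1})=\tBl_{\underline{\fj}_{1\bullet}}\AA^{n-1;r}$; then the hypothesis ensures $\ell=\lcm(a_2,\dotsc,a_k)$, so Proposition~\ref{P:canonicity-of-multi-weighted-blow-up-II} identifies that canonical stack with $\Bl_{\fj_1}\AA^{n-1;r}$. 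This bypasses any explicit ray-by-ray bookkeeping. Your direct combinatorial matching is more concrete and, for $k\ge 2$, is correct and illuminating (the primitivity computation pinpoints exactly where the hypothesis enters).

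However, there is a genuine gap when $k=1$. In that case $\fj_1=\fa$, and your claimed bijection $\bE(\fj)\xrightarrow{\sim}\bE(\fj_1)$ can fail: by Remark~\ref{R:numerical-data-for-j} one has $\bE(\fj)\simeq\bE^+(\fa)$, whereas $\bE(\fj_1)=\bE(\fa)$, and these differ whenever $\fa\subset(\underline{x}_i)$ for some $i>n-r$. Concretely, take $n=2$, $r=1$, $\fj=(x_1,\underline{x}_2^m)$: then $\bE(\fj)$ has one ray with $\bu_\rho=(m,1)$, but $\fj_1=(\underline{x}_2^m)$ has $\bE(\fj_1)=\emptyset$, so your weight-matching step (``the $\Gm^{\bE(\fj)}$-weights \ldots\ match the $\Gm^{\bE(\fj_1)}$-weights'') cannot even be formulated. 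The conclusion still holds---$V(x_1')\simeq[\Spec(\kk[\underline{x}_2'^{\pm},\underline{x}_\rho'])/\Gm]\simeq\AA^{1;1}$---but the identification passes through a further reduction (eliminating the extra torus factor via an invertible coordinate, as in \cite[Lemma~1.3.1]{quek-rydh-weighted-blow-up}) that your proposal does not supply. Note also that for $k=1$ your primitivity argument degenerates: the step ``$\gcd(w_2,\dotsc,w_k)=1$, hence $d_\rho\mid m_\rho$'' involves an empty gcd and does not go through as written, though primitivity can be salvaged directly from Remark~\ref{R:numerical-data-for-j}. Since the $k=1$ case is precisely what is needed in the proof of Theorem~\ref{T:re-embedding-principle} when the original invariant is $(\infty)$, this gap must be addressed.
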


\begin{proof}
We saw at the start of Section~\ref{X:II-step-2} that $V(x_1 \cdot t^{w_1}) \times_{\tBl_{\underline{\fj}_\bullet}\AA^{n;r}} \Bl_\fj\AA^{n;r} = V(x_1')$. Thus, by Remark~\ref{R:satriano-construction}, $V(x_1')$ is the canonical smooth, toroidal Artin stack over $V(x_1 \cdot t^{w_1})$. 

On the other hand, $V(x_1 \cdot t^{w_1})$, being the proper transform of $\AA^{n-1;r} = V(x_1) \subset \AA^{n;r}$ under $\tBl_{\underline{\fj}_\bullet}\AA^{n;r} \to \AA^{n;r}$, is equal to $\tBl_{\underline{\fj}_{1\bullet}}\AA^{n-1;r}$, where $\underline{\fj}_{1\bullet} = \underline{\fj}_\bullet|_{V(x_1)} = \fj_1^{1/\ell}$; see \cite[Lemma 4.6]{quek-weighted-log-resolution}. By hypothesis, $\ell = \lcm(a_2,\dotsc,a_k)$, whence Proposition~\ref{P:canonicity-of-multi-weighted-blow-up-II} implies that the canonical smooth, toroidal Artin stack over $V(x_1 \cdot t^{w_1})$ is also $\Bl_{\fj_1}\AA^{n-1;r}$. Combining this with the preceding paragraph, we deduce the corollary.
\end{proof}

To lift the hypothesis in Corollary~\ref{C:proper-transform-of-first-coordinate-hyperplane}, one needs to consider a natural extension of the discussion in this section, which we do not need for the purposes of this paper. Nevertheless we treat this briefly below. 

\begin{xpar}\label{X:more-multi-weighted-blow-ups}
Slightly more generally, for any $c \in \NN_{>0}$, we may consider $\underline{\fj}_\bullet^{1/c} := \fj^{1/c\ell} := (x_1^{1/cw_1},\dotsc,x_k^{1/cw_k},\fa^{1/c\ell})$, \textit{i.e.} the integral closure in $\sO_{\AA^n}[t]$ of the $\sO_{\AA^n}$-subalgebra generated by $\lbrace x_i \cdot t^{cw_i} \colon 1 \leq i \leq k \rbrace$ and $\fa \cdot t^{c\ell}$. Schematically, the \emph{multi-weighted blow-up of $\AA^{n;r}$ along $\underline{\fj}_\bullet^{1/c}$} is defined as \[
    \pi_{\underline{\fj}_\bullet^{1/c}} \colon \Bl_{\underline{\fj}_\bullet^{1/c}}\AA^n := \Bl_{\fj,\bb}\AA^n \xrightarrow{\pi_{\fj,\bb}} \AA^n, 
\]
where \[
    \bb := \left(\frac{c\ell}{\gcd\left(c\ell,N_\rho(\fj)\right)} \colon \rho \in \bE(\fj) \right) = \left(\frac{c}{\gcd\left(c,\frac{N_\rho(\fj)}{\ell}\right)} \colon \rho \in \bE(\fj) \right).
\]
The same toroidal logarithmic structure on $\AA^{\Sigma_\fj(1)} \smallsetminus V(J_{\Sigma_\fj})$ as in Section~\ref{X:logarithmic-structure-of-multi-weighted-blow-up} descends to a toroidal logarithmic structure on $\Bl_{\underline{\fj}_\bullet^{1/c}}\AA^n$. We denote by $\Bl_{\underline{\fj}_\bullet^{1/c}}\AA^{n;r}$ the resulting logarithmic Artin stack. If $c=1$, note that $\Bl_{\underline{\fj}_\bullet}\AA^{n;r} = \Bl_\fj\AA^{n;r}$. Then: \begin{enumerate}
    \item By the same method of proof as for Proposition~\ref{P:canonicity-of-multi-weighted-blow-up-II}, it is the canonical smooth, toroidal Artin stack over $\tBl_{\underline{\fj}_\bullet^{1/c}}\AA^{n;r} := \SProj_{\AA^n}\left(\underline{\fj}_\bullet^{1/c}\right)$.
    \item Corollary~\ref{C:proper-transform-of-first-coordinate-hyperplane} has the following natural generalization, with the same proof. Let $\AA^{n-1;r} = V(x_1) \subset \AA^{n;r}$ and $\fj_1 = \fj|_{V(x_1)}$, so that $\underline{\fj}_{1\bullet} = \fj_1^{1/\ell_1}$, where $\ell_1 := \lcm(a_2,\dotsc,a_k)$ ($:= 1$ if $k = 1$). Then the proper transform $V(x_1')$ of $V(x_1) \subset \AA^{n;r}$ under $\pi_{\underline{\fj}_\bullet^{1/c}} \colon \Bl_{\underline{\fj}_\bullet^{1/c}}\AA^{n;r} \to \AA^{n;r}$ is $\pi_{\underline{\fj}_{1\bullet}^{1/cc'}} \colon \Bl_{\underline{\fj}_{1\bullet}^{1/cc'}}\AA^{n-1;r} \to \AA^{n-1;r}$, where $c' := \frac{\ell}{\ell_1}$.
\end{enumerate}
\end{xpar}

\section{Logarithmic resolution via multi-weighted blow-ups}\label{C:log-resolution-via-multi-weighted-blow-ups}

\subsection{Recollections}\label{4.1}
We begin by briefly reviewing the relevant aspects from \cite{quek-weighted-log-resolution}. Unless otherwise stated, $Y$ denotes a strict toroidal $\kk$-scheme, see \cite[Definition B.6]{quek-weighted-log-resolution}, with logarithmic structure $\sM_Y \to \sO_Y$. Let $I \subset \sO_Y$ be a (coherent) ideal on $Y$. 

In Sections~\ref{X:log-stratification}--\ref{X:log-coordinates} below we recall some important notions associated to a strict toroidal $\kk$-scheme; see \cite[Appendix B]{quek-weighted-log-resolution} for details and references.

\begin{xpar}[Logarithmic stratification]\label{X:log-stratification}
For every $p \in Y$, set $\overline{\sM}_{Y,p} := \sM_{Y,p}/\sO_{Y,p}^\ast$. Then $\overline{\sM}_{Y,p}^\gp$ is a free abelian group of finite rank. We define the \emph{rank} of $\sM_Y$ at $p$ to be $r(p) := \rank(\overline{\sM}_{Y,p}^\gp)$. Then $r(p)$ is upper semi-continuous on $Y$; \textit{i.e.} more precisely, there exists a \emph{logarithmic stratification} $Y := \bigsqcup_{r \in \NN}{Y(r)}$, where for each $r \in \NN$,  \begin{enumerate}
    \item $Y(r)$ is a locally closed, pure-dimensional, smooth subscheme of $Y$, consisting precisely of points $p \in Y(r)$ such that $r(p) = r$; 
    \item the schematic closure $\overline{Y(r)}$  is $\bigsqcup_{r' \geq r}{Y(r')}$.
\end{enumerate}
Consider the open immersion $j \colon U := Y(0) \inj Y$. Then the logarithmic structure $\sM_Y \to \sO_Y$ can be recovered as the inclusion $\sM_Y := j_\ast(\sO_U^\ast) \cap \sO_Y \inj \sO_Y$. In particular, the logarithmic structure $\sM_Y \to \sO_Y$ is always injective. 

We define the \emph{toroidal divisor} of $Y$ to be the complement $D := Y \smallsetminus Y(0)$ with the reduced closed subscheme structure. Finally, for every $p \in Y$, the \emph{logarithmic stratum} at $p$ is defined as $\fs_p := Y(r(p))$.
\end{xpar}

\begin{xpar}[Logarithmic derivations]\label{X:log-derivations}
The $\sO_Y$-module $\Theta_Y^{\log}$ of \emph{logarithmic derivations} on $Y$ is the $\sO_Y$-submodule of $\Theta_Y$ ($:= \sO_Y$-module of usual derivations on $Y$) whose sections are derivations that preserve the ideal of $D$. 
\end{xpar}

\begin{xpar}[Logarithmic coordinates]\label{X:log-coordinates}
At every point $p \in Y$, there exists a local system of \emph{logarithmic coordinates} at $p$, which comprises of: \begin{enumerate}
    \item\label{X:lc-1} a local system of \emph{ordinary coordinates} $(x_1,x_2,\dotsc,x_{n-r})$ at $p$; \textit{i.e.} these descend to a local system of coordinates of $\fs_p$ at $p$;
    \item\label{X:lc-2} and a local \emph{chart} $\fc \colon M \to \sM_Y(U)$ which is \emph{neat} at $p$; \textit{i.e.} the composition $M \xrightarrow{\fc} \sM_Y(U) \to \sM_{Y,p} \to \overline{\sM}_{Y,p}$ is an isomorphism. 
\end{enumerate}
Moreover, setting $s := \codim_{\fs_p}\overline{\lbrace p \rbrace}$, it is customary to arrange for the first $s$ elements in the ordered tuple $(x_1,x_2,\dotsc,x_{n-r})$ to descend to a regular system of parameters of $\sO_{\fs_p,p}$, in which case $(x_1,x_2,\dotsc,x_s)$ is called a local system of \emph{ordinary parameters} at $p$. Formally, \eqref{X:lc-1} and \eqref{X:lc-2} induces an isomorphism \[
    \kappa(p)\left\llbracket x_1,x_2,\dotsc,x_s,M\right\rrbracket \xrightarrow{\;\lowsimeq\;} \widehat{\sO}_{Y,p}, 
\]
which sends $x_i$ to $x_i$ and $m \in M$ to the image of $\fc(m)$ under $\sM_Y \to \sO_Y$. 
\end{xpar}

\begin{remark}\label{R:log-coordinates}
Let us now explicate the above notions in the event that $Y$ is in addition a smooth $\kk$-scheme. (We refer the reader to \cite[Appendix B]{quek-weighted-log-resolution} for the general case.) In this case, the toroidal divisor $D$ is a simple normal crossings (snc) divisor. 

At every $p \in Y$, let $D$ be given locally on a neighbourhood $U$ of $p$ by $\prod_{i > n-r}{x_i} = 0$ for a local system of coordinates $(x_1,x_2,\dotsc,x_n)$ at $p$. Then $(x_1,x_2,\dotsc,x_{n-r})$ is a local system of ordinary coordinates at $p$, and $(x_{n-r+1},\dotsc,x_n)$ induces a local chart $\fc \colon \NN^r \to \sM_Y(U)$ that is neat at $p$. As in Section~\ref{3.2}, we usually underline the parameters $(\underline{x}_{n-r+1},\dotsc,\underline{x}_n)$ to emphasize that they are \emph{monomial parameters} at $p$. Finally, the stalk $\Theta_{Y,p}^{\log}$ is generated as a free $\sO_{Y,p}$-module by $\frac{\partial}{\partial x_i}$ for $1 \leq i \leq n-r$ and $x_i\frac{\partial}{\partial x_i}$ for $n-r < i \leq n$.
\end{remark}

\begin{xpar}[Logarithmic differential operators]\label{X:logarithmic-differential-operators}
For each $m \in \NN$, let $\sD_{Y,\log}^{\leq m}$ be the sheaf of \emph{logarithmic differential operators} on $Y$ of order at most $m$; \textit{i.e.} in characteristic zero, this is simply the $\sO_Y$-submodule of the total sheaf $\sD_Y = \sD_Y^\infty$ of differential operators generated by $\sO_Y$ and the images of $(\Theta_Y^{\log})^{\otimes i}$ for $1 \leq i \leq m$. The total sheaf of logarithmic differential operators on $Y$ is \[
    \sD_Y^{\log} = \sD_{Y,\log}^\infty := \bigcup_{m \in \NN}{\sD^{\leq m}_{Y,\log}}.
\]
We write $\sD_{Y,\log}^{\leq m}(I)$ (resp.\ $\sD_{Y,\log}^\infty(I)$) to denote the ideal on $Y$ generated by the image of $I \subset \sO_Y$ under the logarithmic differential operators in $\sD_{Y,\log}^{\leq m}$ (resp.\ $\sD_{Y,\log}^\infty$).
\end{xpar}

\begin{xpar}[Logarithmic order]\label{X:logarithmic-order}
The \emph{logarithmic order} of $I \subset \sO_Y$ at a point $p \in Y$ is \[
    \logord_p(I) := \min\left\lbrace n \in \NN \colon \sD^{\leq n}_{Y,\log}(I)_p = \sO_{Y,p} \right\rbrace \in \NN \sqcup \lbrace \infty \rbrace, 
\]
where we take $\min(\emptyset) := \infty$. We set $\max\logord(I) := \max_{p \in Y}{\logord_p(I)}$.
\end{xpar}

\begin{xpar}[Monomiality]\label{X:monomiality}
We say an ideal $Q \subset \sO_Y$ is \emph{monomial} if it is generated by the image of an ideal of $\sM_Y$ under $\sM_Y \to \sO_Y$. Equivalently, $Q$ is monomial if and only if $\sD_{Y,\log}^{\leq 1}(Q) = Q$. 

As in Section~\ref{3.1}, a \emph{monomial Rees algebra} $Q_\bullet$ is a finitely generated, quasi-coherent graded $\sO_Y$-subalgebra $\bigoplus_{m \in \NN}{Q_m \cdot t^m} \subset \sO_Y[t]$ such that each $Q_m \subset \sO_Y$ is a monomial ideal. 

The \emph{monomial saturation} $\sM_Y(I)$ of an ideal $I \subset \sO_Y$ is defined as the intersection of all monomial ideals containing $I$. Equivalently, $\sM_Y(I) = \sD_{Y,\log}^\infty(I)$. Note that $\sM_Y(I)$ is itself a monomial ideal on $Y$. Lastly, for $p \in Y$, $\logord_p(I) = \infty$ if and only if $p \in V(\sM_Y(I))$; \textit{i.e.} $\logord_p(I) < \infty$ if and only if $\sM_Y(I)_p = \sO_{Y,p}$.
\end{xpar}

\begin{xpar}[Maximal contact element]\label{X:maximal-contact}
If $1 \leq a := \max\logord(I) < \infty$, the (logarithmic) \emph{maximal contact ideal} of $I$ is $\MC(I) = \sD_{Y,\log}^{\leq a-1}(I)$. For a point $p \in Y$ such that $\logord_p(I) = a$, a (logarithmic) \emph{maximal contact element} of $I$ at $p$ is a section in $\MC(I)_p$ that is part of a system of ordinary parameters at $p$ or, equivalently, that has logarithmic order $1$ at $p$. Note that since $\CHAR(\kk) = 0$, we have $\sO_{Y,p} = \sD_{Y,\log}^{\leq a}(I)_p = \sD_{Y,\log}^{\leq 1}(\MC(I))_p$, so maximal contact elements always exist locally.
\end{xpar}

\begin{xpar}[Coefficient ideal]\label{X:coefficient-ideal}
If $1 \leq a := \max\logord(I) < \infty$, the (logarithmic) \emph{coefficient ideal} of an ideal $I \subset \sO_Y$ is the ideal \[
    \sC(I,a) := \left(\prod_{j=0}^{a-1}{\sD_{Y,\log}^{\leq j}(I)^{c_j}} \colon c_j \in \NN,\; \sum_{j=0}^{a-1}{(a-j)c_j \geq a!}\right) \subset \sO_Y.
\]
Note that $\MC(I)^{a!} \subset \sC(I,a)$, so for any point $p \in Y$ with $\logord_p(I) = a$ and any maximal contact element $x$ of $I$ at $p$, we have $x^{a!} \in \sC(I,a)_p$. See \cite[Section 5]{quek-weighted-log-resolution} and the references therein for the motivation and properties of this construction.
\end{xpar}

\begin{xpar}[A well-ordered set]\label{X:well-ordered-set}
For $k \geq 1$, we define \begin{align*}
    \NN_{>0}^{k,!} &:= \text{\small$\left\lbrace (b_i)_{i=1}^k \in \NN_{>0}^k \colon b_1 \leq \frac{b_2}{(b_1-1)!} \leq \frac{b_3}{\prod_{j=1}^2{(b_j-1)!}} \leq \dotsb \leq \frac{b_k}{\prod_{j=1}^{k-1}{(b_j-1)!}} \right\rbrace$}, \\
    \NN_\infty^{k,!} &:= \left(\NN_{>0}^{k-1,!} \times \lbrace \infty \rbrace\right) \sqcup \NN_{>0}^{k,!},
\end{align*}
and for $s \geq 1$, we set \begin{align*}
    \NN_{>0}^{\leq s,!} &:= \left\lbrace (0) \right\rbrace \sqcup \left(\bigsqcup_{k=0}^s{\NN_{>0}^{k,!}}\right), \\
    \NN_\infty^{\leq s,!} &:= \left\lbrace (0) \right\rbrace \sqcup \left(\bigsqcup_{k=0}^s{\NN_\infty^{k,!}}\right).
\end{align*}
We well-order the set $\NN_\infty^{\leq s,!}$ by the lexicographic order $<$, with a caveat: our lexicographic order considers truncations of sequences to be strictly larger. For example, in $\NN_\infty^{\leq 3,!}$, we have \[
    (0) < (1,2,8) < (1,3,6) < (1,3) < (1,4,24) < (1,\infty) < (1) < (\infty) < ().
\]
\end{xpar}

We can now recall the key construction in \cite[Section 6.1]{quek-weighted-log-resolution}. 

\begin{xpar}[Invariant of an ideal at a point]\label{X:log-invariant}
  Let $p \in Y$, and set $s := \codim_{\fs_p}\overline{\lbrace p \rbrace}$ (see Section~\ref{X:log-coordinates}). In Section~\ref{X:associated-data} below, we will associate to the pair $(I,p)$ the following:
  \begin{enumerate}[label=(\alph*),ref=\alph*]
    \item\label{4.1.10.a} a sequence of ordinary parameters $(x_1,x_2,\dotsc,x_k)$ at $p$,
    \item\label{4.1.10.b} a monomial ideal $Q \subset \sO_{Y,p}$,
    \item\label{4.1.10.c} a sequence $(b_1,b_2,\dotsc,b_k) \in \NN_{>0}^{\leq s,!}$.
\end{enumerate}
Then the \emph{invariant} of $I$ at $p$ is a non-decreasing sequence of length at most $\min\{s+1,\dim(Y)\}$: \[
    \inv_p(I) := \begin{cases}
        \left(b_1,\frac{b_2}{(b_1-1)!},\frac{b_3}{\prod_{j=1}^2{(b_j-1)!}},\dotsc,\frac{b_k}{\prod_{j=1}^{k-1}{(b_j-1)!}}\right) \quad &\textrm{if } Q = 0, \\
        \left(b_1,\frac{b_2}{(b_1-1)!},\frac{b_3}{\prod_{j=1}^2{(b_j-1)!}},\dotsc,\frac{b_k}{\prod_{j=1}^{k-1}{(b_j-1)!}},\infty\right) \quad &\textrm{if } Q \neq 0.
    \end{cases}
\]
We denote the finite entries of $\inv_p(I)$ by $a_1,a_2,a_3,\dotsc,a_k$. Note that the set of all possible invariants of ideals at points in $Y$ can be well ordered by the same lexicographic order as in Section~\ref{X:well-ordered-set} since it is order-isomorphic to $\NN^{\leq \dim(Y),!}_\infty$. We set $\max\inv(I) := \max_{p \in Y}{\inv_p(I)}$.
\end{xpar}

\begin{xpar}\label{X:associated-data}
The data in Section~\ref{X:log-invariant}\eqref{4.1.10.a}--\eqref{4.1.10.c} associated to $(I,p)$ is defined inductively as follows:
\begin{enumerate}
    \item\label{X:ad-1} \emph{Base case}: If $\logord_p(I) = 0$, then $p \notin V(I)$; we set $k=1$, with $(b_1,\dotsc,b_k) := (0)$, $(x_1,\dotsc,x_k) := (x_1)$ for any ordinary parameter $x_1$ at $p$, and $Q = 0$. If $\logord_p(I) = \infty$, set $k=0$, with $(b_1,\dotsc,b_k) := ()$, $(x_1,\dotsc,x_k) := ()$, and $Q := \sM_Y(I)_p$. 
    
    If $\logord_p(I)$ is neither, set $b_1 := \logord_{p}(I) \in \NN_{>0}$, let $x_1$ be a maximal contact element of $I$ at $p$, set $I[1] := I$, and proceed to the $1^{\textrm{st}}$ \emph{inductive step}.
    
    \item\label{X:ad-2} $\ell^{\textrm{th}}$ \emph{inductive step}: Suppose by the induction hypothesis that the sequences $(b_1,\dotsc,b_\ell)$ and $(x_1,\dotsc,x_\ell)$, as well as the ideal $I[\ell] \subset \sO_{V(x_1,\dotsc,x_{\ell-1})}$, have been defined. Set \[
        \qquad \qquad I[\ell+1] := \sC(I[\ell],b_\ell)|_{V(x_1,\dotsc,x_\ell)} \subset \sO_{V(x_1,\dotsc,x_\ell)}.
    \]
    If $\logord_p(I[\ell+1]) = \infty$, set $k = \ell$, and define $Q$ to be the monomial ideal of $\sO_{Y,p}$ that lifts the monomial ideal $\sM_{V(x_1,\dotsc,x_\ell)}(I[\ell+1])_p \subset \sO_{V(x_1,\dotsc,x_\ell),p}$. 
    
    If not, set $b_{\ell+1} := \logord_p(I[\ell+1]) \in \NN_{>0}$, let $x_{\ell+1}$ be a lift of $\overline{x}_{\ell+1}$ to $\sO_{Y,p}$, where $\overline{x}_{\ell+1}$ is a maximal contact element of $I[\ell+1]$ at $p$, and proceed to the $(\ell+1)^{\textrm{st}}$ \emph{inductive step}.
\end{enumerate}
\end{xpar}

\begin{remark}\label{R:log-invariant}
The invariant $\inv$ satisfies the following properties:
\begin{enumerate}
    \item\label{R:li-1} Both $\inv_p(I)$ and the monomial ideal $Q$ are well defined, \textit{i.e.} independent of the choice of $(x_1,x_2,\dotsc,x_k)$; see \cite[Lemma 6.1]{quek-weighted-log-resolution}.
    \item \label{R:li-2} $\inv_p(I)$ is upper semi-continuous on $Y$; see \cite[Lemma 6.3(ii)]{quek-weighted-log-resolution}.
    \item\label{R:li-3} If $f \colon \widetilde{Y} \to Y$ is a logarithmically smooth morphism between strict toroidal $\kk$-schemes and $f$ maps $\widetilde{p} \in \widetilde{Y}$ to $p \in Y$, then $\inv_{\widetilde{p}}(f^{-1}I \cdot \sO_{\widetilde{Y}}) = \inv_p(I)$; see \cite[Lemma 6.3(iii)]{quek-weighted-log-resolution}. If $f$ is moreover surjective, then $\max\inv(f^{-1}I \cdot \sO_{\widetilde{Y}}) = \max\inv(I)$.
    \item\label{R:li-4} Finally, if $V(I)$ inherits its logarithmic structure from $Y$, then $V(I)$ is smooth and toroidal at $p$ if and only if $\inv_p(I) = (1,1,\dotsc,1)$ of length equal to the height of $I_p$; see \cite[Lemma 5.1.2]{abramovich-temkin-wlodarczyk-toroidal-destackification-kummer-blow-ups}.
\end{enumerate}
\end{remark}

\begin{xpar}[Local description of associated center]\label{X:center}
Let $p \in Y$ be such that $\inv_p(I) = \max\inv(I)$, and let $(x_1,x_2,\dotsc,x_k)$, $(b_1,b_2,\dotsc,b_k)$, $(a_1,a_2,\dotsc,a_k)$ and $Q$ be associated to $(I,p)$ as in Section~\ref{X:log-invariant}\eqref{4.1.10.a}--\eqref{4.1.10.c}. 

Then the \emph{center associated to $I$ at $p$} is the following integrally closed ($\NN$-graded) Rees algebra: \[
    \sJ(I,p)_\bullet := \left(x_1^{a_1},x_2^{a_2},\dotsc,x_k^{a_k},Q^{1/d}\right) \; \subset \; \sO_{Y,p}[t], 
\]
where \[
    d := \prod_{i=1}^k{(b_i-1)!} \; \textrm{ ($:= 1$ if $k = 0$)}
\]
and by convention, $x_1^{a_1} := 1$ if $k=1$ and $a_1 = 0$. Writing each $a_i = \frac{u_i}{v_i}$ for $u_i,v_i \in \NN$, $\left(x_1^{a_1},x_2^{a_2},\dotsc,x_k^{a_k},Q^{1/d}\right)$ refers to the integral closure in $\sO_{Y,p}[t]$ of the $\sO_{Y,p}$-subalgebra generated by $\lbrace x_i^{u_i} \cdot t^{v_i} \colon 1 \leq i \leq k \rbrace$ and $Q \cdot t^d := \lbrace \alpha \cdot t^d \colon \alpha \in Q \rbrace$. This definition is independent of the choices of $u_i$ and $v_i$ since we are passing to the integral closure in $\sO_{Y,p}[t]$.
\end{xpar}

\begin{xpar}[Convention]\label{X:convention-4.1}
The notation above follows \cite[Notation 2.12]{quek-weighted-log-resolution}. For the remainder of this paper, we will adopt the notation therein without further mention. Namely, if $J_\bullet \subset \sO_Y[t]$ is an integrally closed ($\NN$-graded) Rees algebra on a scheme $Y$ and $q = \frac{u}{v} \in \QQ_{>0}$ (for $u,v \in \NN_{>0}$), then $J_\bullet^q$ is the integral closure in $\sO_Y[t]$ of the Rees algebra $J_{\lceil u\bullet/v \rceil} = \bigoplus_{m \in \NN}{J_{\lceil um/v \rceil} \cdot t^m}$; \textit{i.e.} the $v$\textsuperscript{th} Veronese subalgebra of $J_\bullet^q$ is $J_{u\bullet}$. 
Explicitly, if $J_\bullet = \left(g_1^{a_1},g_2^{a_2},\dotsc,g_k^{a_k}\right)$ $\subset \sO_Y[t]$ for global sections $g_i$ of $\sO_Y$ and $a_i \in \QQ_{>0}$, then $J_\bullet^q$ is $\left(g_1^{a_1q},g_2^{a_2q}\dotsc,g_k^{a_kq}\right)$.
\end{xpar}

\begin{xpar}\label{X:dth-Veronese-subalgebra}
The $d^{\textrm{th}}$ Veronese subalgebra $\sJ(I,p)_{d\bullet}$ is generated in degree $1$. For later purposes, we set $J(I,p) := \sJ(I,p)_d =$ the integral closure of $\left(x_1^{a_1d},x_2^{a_2d},\dotsc,x_k^{a_kd},Q\right)$.
\end{xpar}

\begin{remark}\label{R:center}
  The object   $\sJ(I,p)_\bullet$ does not depend on the choice of sequence of ordinary parameters $(x_1,x_2,\dotsc,x_k)$ associated to $I$ at $p$ and is \emph{$I_p$-admissible} -- that is, it contains the Rees algebra of $I_p$. In fact, a ``\emph{unique admissibility property}'' holds for $\sJ(I,p)_\bullet$. For details, see \cite[Theorem 6.5]{quek-weighted-log-resolution}.
\end{remark}

\begin{xpar}[Associated center]\label{X:associated-center}
By \cite[Theorem 6.9]{quek-weighted-log-resolution}, there is an ($\NN$-graded) Rees algebra $\sJ(I)_\bullet \subset \sO_Y[t]$ on $Y$ such that for every $p \in Y$, we have \[
    \left(\sJ(I)_\bullet\right)_p = \begin{cases}
        \sJ(I,p)_\bullet \quad &\textrm{if } \inv_p(I) = \max\inv(I), \\
        \sO_{Y,p}[t] \quad &\textrm{if } \inv_p(I) < \max\inv(I).
    \end{cases}
\]
We call $\sJ(I)_\bullet$ the \emph{center associated to $I$}. Then: \begin{enumerate}
    \item $\sJ(I)_\bullet$ is \emph{$I$-admissible}; \textit{i.e.} it contains the Rees algebra of $I$.
    \item Setting $d := \prod_{i=1}^k{(b_i-1)!}$ as in Section~\ref{X:center}, the $d^{\textrm{th}}$ Veronese subalgebra $\sJ(I)_{d\bullet}$ is generated in degree $1$. For later purposes, we set \[
        J(I) := \sJ(I)_d.
    \]
    \item Finally, if $f \colon \widetilde{Y} \to Y$ is a logarithmically smooth, surjective morphism between strict toroidal $\kk$-schemes, then $\sJ(f^{-1}I \cdot \sO_{\widetilde{Y}})_\bullet = \sJ(I)_\bullet$; see \cite[Lemma 6.12]{quek-weighted-log-resolution}.
\end{enumerate} 
\end{xpar}

\begin{xpar}[Associated reduced center]\label{X:reduced-center}
Let $(a_1,a_2,\dotsc,a_k)$ and $d$ be as in Section~\ref{X:center}. Setting \[
    \ell := \lcm(a_id \colon 1 \leq i \leq k) \; \textrm{ $(:= 1$ if $k=0)$}, 
\]
the \emph{reduced center associated to $I$} is \[
    \underline{J}(I)_\bullet := J(I)^{1/\ell} = \sJ(I)_\bullet^{d/\ell}. 
\]
For $p \in Y$ such that $\inv_p(I) = \max\inv(I)$, the stalk of $\underline{J}(I)_\bullet$ at $p$ is \[
    \underline{J}(I,p)_\bullet := J(I,p)^{1/\ell} = \sJ(I,p)_\bullet^{d/\ell} = \left(x_1^{1/w_1},x_2^{1/w_2},\dotsc,x_k^{1/w_k},Q^{1/\ell}\right)
\]
where $(x_1,x_2,\dotsc,x_k)$ and $Q$ are associated to $(I,p)$ as in Section~\ref{X:log-invariant}(a)--(c), $w_i := \frac{\ell}{a_id}$ for $1 \leq i \leq k$, and $\gcd(w_i \colon 1 \leq i \leq k) = 1$; see \cite[Definition 3.4]{quek-weighted-log-resolution}. 
\end{xpar}

\subsection{Multi-weighted blow-up along the associated center}\label{4.2}

In this section let $Y$ be as in Section~\ref{4.1}, and in addition assume $Y$ is smooth over $\kk$. Motivated by Section~\ref{3.2}, we make the following definition. 

\begin{definition}\label{D:multi-weighted-blow-up-along-center}
The \emph{multi-weighted blow-up $\Bl_{J(I)}Y$ of $Y$ along $\underline{J}(I)_\bullet$}  is the composition \[
    \pi_{J(I)} \colon \Bl_{J(I)}Y \lra \tBl_{\underline{J}(I)_\bullet}Y \xrightarrow{\varpi_{\underline{J}(I)_\bullet}} Y,
\]
where \[
    \tBl_{\underline{J}(I)_\bullet}Y := \SProj_Y\left(\underline{J}(I)_\bullet\right) = \left[\GSpec_Y\left(\underline{J}(I)_\bullet\right) \smallsetminus V\left(\underline{J}(I)_+\right) \q \Gm\right] \xrightarrow{\varpi_{\underline{J}(I)_\bullet}} Y
\]
is the weighted toroidal blow-up of $Y$ along $\underline{J}(I)_\bullet$, see \cite[Section 4]{quek-weighted-log-resolution} and \[
    \zeta \colon \Bl_{J(I)}Y \lra \tBl_{\underline{J}(I)_\bullet}Y
\]
is the canonical smooth, toroidal Artin stack, in the sense of Section~\ref{X:satriano-construction} (\textit{i.e.} \cite[Section 3]{satriano-canonical-artin-stacks-for-log-smooth-schemes}), over the toroidal Deligne--Mumford stack $\tBl_{\underline{J}(I)_\bullet}Y$.
\end{definition}

\begin{remark}\label{R:isomorphism-above-most-singular-locus}
Since $\varpi_{\underline{J}(I)_\bullet}$ is an isomorphism away from the closed locus of points $p \in Y$ such that $\inv_p(I) = \max\inv(I)$, the same holds for $\pi_{J(I)}$.
\end{remark}

While $\tBl_{\underline{J}(I)_\bullet}Y$ is a global quotient stack, we warn that $\Bl_{J(I)}Y$ is typically not. Nevertheless, $\Bl_{J(I)}Y$ is locally a quotient stack; see Sections~\ref{X:local-description} and \ref{X:rees-algebra-description} below.

\begin{xpar}[Local description of the multi-weighted blow-up in Definition~\ref{D:multi-weighted-blow-up-along-center}]\label{X:local-description}
Fix $p \in Y$ such that $\inv_p(I) = \max\inv(I)$. Extend a sequence of ordinary parameters $(x_1,x_2,\dotsc,x_k)$
associated to $I$ at $p$ (see Section~\ref{X:log-invariant}) to a system of ordinary coordinates $(x_1,x_2,\dotsc,x_{n-r})$ at $p$ (with $n-r \geq k$) and a system of mononial parameters $(\underline{x}_{n-r+1},\dotsc,\underline{x}_n)$ at $p$ (see Section~\ref{X:log-coordinates}). This local system of logarithmic coordinates at $p$ then induces an \'etale, strict morphism \[
    \pmb{x} = (x_1,x_2,\dotsc,x_{n-r},\underline{x}_{n-r+1},\dotsc,\underline{x}_n) \colon U \lra \AA^{n;r}
\]
for some Zariski open $U$ containing $p$ such that the stalk $\underline{J}(I,p)_\bullet$ extends over $U$ to $\underline{J}(I)_\bullet|_U$. Write \begin{align*}
    J(I,p) &= \textrm{integral closure of } \left(x_1^{a_1d},\dotsc,x_k^{a_kd},Q\right), \\
    \underline{J}(I,p)_\bullet &= \left(x_1^{1/w_1},x_2^{1/w_2},\dotsc,x_k^{1/w_k},Q^{1/\ell}\right)
\end{align*}
as in Sections~\ref{X:dth-Veronese-subalgebra} and \ref{X:reduced-center}, and set \begin{align*}
    \fj &:= J(I,p) \cap \sO_{\AA^{n;r}} = \textrm{integral closure of } \left(x_1^{a_1d},\dotsc,x_k^{a_kd},\fa\right), \\
    \underline{\fj}_\bullet &:= \underline{J}(I,p)_\bullet \cap \sO_{\AA^{n;r}}[t] = \left(x_1^{1/w_1},x_2^{1/w_2},\dotsc,x_k^{1/w_k},\fa^{1/\ell}\right),
\end{align*}
where $\fa := Q \cap \kk[\underline{x}_{n-r+1},\dotsc,\underline{x}_n]$ is a monomial ideal on $\AA^{n;r}$ that generates $Q$. Moreover, $\underline{\fj}_\bullet = \fj^{1/\ell}$ with $\ell = \lcm(a_id \colon 1 \leq i \leq k)$ ($:= 1$ if $k = 0$); see Section~\ref{3.2}. Then we have the commutative diagram with Cartesian squares \begin{equation}\label{E:local-description}
    \begin{tikzcd}
        \Bl_{J(I)}U \arrow[to=2-1] \arrow[to=1-3, "\textrm{\'etale, strict}"] \arrow[to=3-1, bend right=70, swap, "\pi_{J(I)}"] & &  \Bl_\fj\AA^{n;r} \arrow[to=2-3] \arrow[to=3-3, bend left=70, "\pi_\fj"] \\
        \tBl_{\underline{J}(I)_\bullet}U \arrow[to=3-1, "\varpi_{\underline{J}(I)_\bullet}"] \arrow[to=2-3, "\textrm{\'etale, strict}"] & & \tBl_{\underline{\fj}_\bullet}\AA^{n;r} \arrow[to=3-3, "\varpi_{\underline{\fj}_\bullet}"] \\
        U \arrow[to=3-3, "\pmb{x}"] & & \AA^{n;r}
    \end{tikzcd}
\end{equation}
Let us explicate the diagram. Firstly, since $\pmb{x}^\ast\underline{\fj}_\bullet = \underline{J}(I)_\bullet$, we have \[
    \tBl_{\underline{J}(I)_\bullet}U = U \times_{\AA^{n;r}} \tBl_{\underline{\fj}_\bullet}\AA^{n;r}.
\]
Next, by Proposition~\ref{P:canonicity-of-multi-weighted-blow-up-II}, $\Bl_\fj\AA^{n;r}$ is the canonical smooth, toroidal Artin stack over $\tBl_{\underline{\fj}_\bullet}\AA^{n;r}$. Therefore, by Remark~\ref{R:satriano-construction}, \[
    \Bl_{J(I)}U = \tBl_{\underline{J}(I)_\bullet}U \times_{\tBl_{\underline{\fj}_\bullet}\AA^{n;r}} \Bl_\fj\AA^{n;r}.
\]
Finally, we make a few remarks:
\begin{enumerate}
    \item\label{rem1} Since $\Bl_\fj\AA^{n;r} \to \tBl_{\underline{\fj}_\bullet}\AA^{n;r}$ is logarithmically smooth, birational, universally closed, surjective, and small (see Remark~\ref{R:II-properties-of-factored-morphism}), so is $\Bl_{J(I)}U \to \tBl_{\underline{J}(I)_\bullet}U$.
    \item\label{rem2} Since $\pi_{\underline{\fj}_\bullet}$ is birational, surjective, and universally closed (see Remark~\ref{R:multi-weighted-blow-up-properties}), so is $\pi_{J(I)}$.
    \item\label{rem3} If $k=0$, then $\varpi_{\underline{J}(I)_\bullet}$ is logarithmically smooth, see \cite[Section 4]{quek-weighted-log-resolution}, and thus by~\eqref{rem1}, so is $\pi_{J(I)}$. This is not true if $k \geq 1$.
    \item\label{rem4} On the other hand, if $Q = 0$, then $\tBl_{\underline{J}(I)_\bullet}U$ is already smooth and $\Bl_{J(I)}U = \tBl_{\underline{J}(I)_\bullet}U$ (see beginning of the proof of Proposition~\ref{P:canonicity-of-multi-weighted-blow-up-II}).
    \item\label{rem5} $\Bl_{J(I)}U$ admits a good moduli space, and it coincides with the coarse moduli space of $\tBl_{\underline{J}(I)_\bullet}U$, which is equal to the schematic blow-up $\bl_{J(I)}U$. 
    Indeed, \[
        \qquad \qquad \tBl_{\underline{J}(I)_\bullet}U = \bl_{J(I)}U \times_{\bl_{\fj}\AA^n} \tBl_{\underline{\fj}_\bullet}\AA^n
    \]
    because the bottom square of \eqref{E:local-description} is Cartesian. Thus, \[
        \qquad \qquad \Bl_{J(I)}U = \bl_{J(I)}U \times_{\bl_{\fj}\AA^n} \Bl_\fj\AA^n.
    \]
    Since $\bl_{\fj}\AA^n$ is the good moduli space of $\Bl_\fj\AA^n$ (see Remark~\ref{R:II-properties-of-factored-morphism}), it follows from \cite[Proposition 4.7(i)]{alper-good-moduli-spaces} that $\bl_{J(I)}U$ is the good moduli space of $\Bl_{J(I)}U$.
\end{enumerate}
\end{xpar}

\begin{remark}\label{R:gms-of-multi-weighted-blow-up-along-center}
Because of Remark~\ref{R:isomorphism-above-most-singular-locus}, the remarks in Section~\ref{X:local-description}\eqref{rem1}--\eqref{rem5} globalize immediately. For example, \eqref{rem5} implies that $\Bl_{J(I)}Y$ admits a good moduli space, and it coincides with the coarse moduli space $\bl_{J(I)}Y$ of $\tBl_{\underline{J}(I)_\bullet}Y$; see \cite[Proposition 4.7(ii)]{alper-good-moduli-spaces}.
\end{remark}

\begin{xpar}[Local description via multi-graded Rees algebras]\label{X:rees-algebra-description}
Let $p \in U \subset Y$ be as in~\eqref{E:local-description}. As in Section~\ref{X:multi-graded-rees-algebras}, one may express $\Bl_{J(I)}U$ as \[
     \left[\GSpec_U\left(\sR^U_\bullet\right) \smallsetminus V\left(J_{\Sigma_\fj}\right) \q \Gm^{\bE(\fj)}\right], 
\]
where $\sR^U_\bullet$ is the $\ZZ^{\bE(\fj)}$-graded Rees algebra \[
    \sO_U\left[t_\rho^{-1} \colon \rho \in \bE(\fj)\right]\left[x_i \cdot \prod_{\rho \in \bE(\fj)}{t_\rho^{\uu_{\rho,i}}} \colon 1 \leq i \leq n\right] \; \subset \; \sO_U\left[t_\rho^{\pm} \colon \rho \in \bE(\fj)\right]
\]
and the $\Gm^{\bE(\fj)}$-action is induced by the $\ZZ^{\bE(\fj)}$-grading on $\sR^U_\bullet$. As in \eqref{E:substitution}, we set $x_i' := x_i \cdot \prod_{\rho \in \bE(\fj)}{t_\rho^{\uu_{\rho,i}}}$ for $1 \leq i \leq n$ and $x_\rho' := t_\rho^{-1}$ for $\rho \in \bE(\fj)$. Moreover, if $Q \neq 0$, the morphism \[
    \begin{tikzcd}
    \Bl_{J(I)}U = \left[\GSpec_U\left(\sR^U_\bullet\right) \smallsetminus V\left(J_{\Sigma_{\fj}}\right) \q \Gm^{\bE(\fj)}\right] \arrow[to=2-1] \\ \tBl_{\underline{J}(I)_\bullet}U = \left[\GSpec_U(\underline{J}(I)_\bullet) \smallsetminus V\left(\underline{J}(I)_+\right) \q \Gm\right]
    \end{tikzcd}
\]
is then induced by \begin{align}\label{E:factored-morphism-formula-for-center}
    x_i' &\longmapsto x_i \cdot \prod_{\rho \in \bE(\fj)}{\left(x_\rho'\right)^{\uu_{\rho,i}}} \nonumber, \\
    t^{-1} &\longmapsto \prod_{\rho \in \bE^+(\fj)}{\left(x_\rho'\right)^{\frac{N_\rho(\fj)}{\ell}}} = \begin{cases}
         \prod_{\rho \in \bE^+(\fa)}{\left(x_\rho'\right)^{N_\rho(\fa)}}\quad &\textrm{if } k = 0, \\
        \prod_{\rho \in \bE(\fj)}{\left(x_\rho'\right)^{\frac{N_\rho(\fj)}{\ell}}} \quad &\textrm{if } k \geq 1;
    \end{cases}
\end{align}
see \eqref{E:factored-morphism-formula} and \eqref{E:II-fM-to-sM}.
\end{xpar}

\hfill

We next turn our attention to the various transforms of $I$ under the multi-weighted blow-up of $Y$ along $\underline{J}(I)_\bullet$. We first recall the following. 

\begin{xpar}\label{X:weak-transform-under-weighted-toroidal-blow-up}
Let $d$ and $\ell$ be as in Section~\ref{X:local-description}. By \cite[Proposition 4.4]{quek-weighted-log-resolution}, \begin{equation}\label{E:weak-transform-under-weighted-toroidal-blow-up}
    \varpi_{\underline{J}(I)_\bullet}^{-1} I \cdot \sO_{\tBl_{\underline{J}(I)_\bullet}Y} \; = \; \fI \cdot (t^{-1})^{\ell/d}
\end{equation} for some ideal $\fI$ on $\tBl_{\underline{J}(I)_\bullet}Y$ since $\underline{J}(I)_\bullet^{\ell/d} = \underline{J}(I)_{(\ell/d)\bullet}$ is precisely $\sJ(I)_\bullet$, which is $I$-admissible (see Section~\ref{X:associated-center}). Moreover, by \cite[Theorem 6.5(i)]{quek-weighted-log-resolution}, $\fI$ is the \emph{weak transform} of $I$ under $\varpi_{\underline{J}(I)_\bullet} \colon \tBl_{\underline{J}(I)_\bullet}Y \to Y$, see \cite[Definition 4.5]{quek-weighted-log-resolution}; \textit{i.e.} this is tantamount to saying $\fI \not\subset (t^{-1})$. 
\end{xpar}

\begin{definition}\label{D:weak-transform-of-I}
Set $\sO := \sO_{\Bl_{J(I)}Y}$. We define the \emph{weak transform} of $I$ under $\pi_{J(I)} \colon \Bl_{J(I)}Y \to Y$ as \[
    \left(\pi_{J(I)}\right)_\ast^{-1}I := \zeta^{-1}\fI \cdot \sO, 
\]
where $\fI$ is the weak transform of $I$ under $\varpi_{\underline{J}(I)_\bullet}$ and $\zeta$ denotes the morphism $\Bl_{J(I)}Y \to \tBl_{\underline{J}(I)_\bullet}Y$ in Definition~\ref{D:multi-weighted-blow-up-along-center}.
\end{definition}

The next proposition shows that Definition~\ref{D:weak-transform-of-I} agrees with Definition~\ref{D:weak-transform}.

\begin{proposition}[Local description of weak transform of $I$]\label{P:weak-transform-justification}
Let $p \in U \subset Y$ be as in \eqref{E:local-description}, and set $\sO := \sO_{\Bl_{J(I)}U}$. Then the restriction of\, $(\pi_{J(I)})_\ast^{-1}I$ to $\Bl_{J(I)}U$ equals \[
    \left(\pi_{J(I)}^{-1}I \cdot \sO\right) \cdot \prod_{\rho \in \bE^+(\fj)}{\left(x_\rho'\right)^{-N_\rho}}, 
\]
where for each $\rho \in \bE^+(\fj)$, $N_\rho$ is the largest natural number $n_\rho$ such that the fractional ideal $(\pi_{J(I)}^{-1}I \cdot \sO) \cdot (x_\rho')^{-n_\rho}$ is an ideal on $\Bl_{J(I)}U$.
\end{proposition}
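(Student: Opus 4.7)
The plan is to work locally in the setting of \eqref{E:local-description}--\eqref{E:factored-morphism-formula-for-center} and exploit the factorization $\pi_{J(I)} = \varpi_{\underline{J}(I)_\bullet} \circ \zeta$. I would first pull back the identity \eqref{E:weak-transform-under-weighted-toroidal-blow-up}, namely $\varpi_{\underline{J}(I)_\bullet}^{-1} I \cdot \sO_{\tBl} = \fI \cdot (t^{-1})^{\ell/d}$, via $\zeta$, and substitute in the formula $\zeta^{-1}(t^{-1}) \cdot \sO = \prod_{\rho \in \bE^+(\fj)}(x_\rho')^{N_\rho(\fj)/\ell}$ read off from \eqref{E:factored-morphism-formula-for-center}. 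This directly yields
\[
    \pi_{J(I)}^{-1} I \cdot \sO \; = \; \zeta^{-1}\fI \cdot \sO \; \cdot \; \prod_{\rho \in \bE^+(\fj)}(x_\rho')^{N_\rho(\fj)/d},
\]
and each exponent $N_\rho(\fj)/d$ is a positive integer: when $k \geq 1$ this combines Lemma~\ref{L:numerical-data-for-j}(i) with $\ell/d = \lcm(a_1d,\dotsc,a_kd)/d$, while when $k = 0$ we have $d = \ell = 1$. Thus the formula stated in the proposition holds with $N_\rho := N_\rho(\fj)/d$, provided I can verify the maximality of these exponents.

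Maximality is equivalent to showing $\zeta^{-1}\fI \cdot \sO \not\subset (x_\rho')$ for every $\rho \in \bE^+(\fj)$. I would verify this by translating to divisorial valuations: introduce the valuation $\nu$ along $V(t^{-1}) \subset \tBl_{\underline{J}(I)_\bullet}U$ and, for each $\rho \in \bE^+(\fj)$, the valuation $\nu_\rho'$ along $V(x_\rho') \subset \Bl_{J(I)}U$ (cf.~\ref{X:more-conventions-2.4}). The key step is to establish the proportionality
\[
    \nu_\rho'(\zeta^\ast g) \; = \; \tfrac{N_\rho(\fj)}{\ell} \cdot \nu(g) \qquad \textrm{for every local section $g$ on $\tBl_{\underline{J}(I)_\bullet}U$.}
\]
Granted this, the characterization $\fI \not\subset (t^{-1})$ of the weak transform in \ref{X:weak-transform-under-weighted-toroidal-blow-up} gives $\nu(\fI) = 0$, hence $\nu_\rho'(\zeta^{-1}\fI \cdot \sO) = 0$, which is exactly the non-divisibility we need.

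The main obstacle lies in justifying the proportionality identity above. While the formula $\zeta^{-1}(t^{-1}) \cdot \sO = \prod_{\rho'}(x_{\rho'}')^{N_{\rho'}(\fj)/\ell}$ immediately forces $\nu_\rho'(\zeta^\ast(t^{-1})) = N_\rho(\fj)/\ell$, propagating this single relation to all pullback sections uses crucially the smallness of $\zeta$ from Remark~\ref{R:II-properties-of-factored-morphism}: since $\zeta$ contracts no divisors and is surjective and birational, $V(x_\rho')$ is the proper transform of an irreducible component $D_\rho$ of $V(t^{-1})$ on $\tBl_{\underline{J}(I)_\bullet}U$, and $\zeta$ induces an isomorphism of the discrete valuation rings at the generic points of $V(x_\rho')$ and $D_\rho$ (up to the scaling by $N_\rho(\fj)/\ell$ pinned down by $t^{-1}$). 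Once this proportionality is in hand the remainder of the proof is routine bookkeeping of exponents.
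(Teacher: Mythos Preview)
Your factorization step is correct and matches the paper's final computation: pulling back \eqref{E:weak-transform-under-weighted-toroidal-blow-up} along $\zeta$ and substituting \eqref{E:factored-morphism-formula-for-center} does give
\[
    \pi_{J(I)}^{-1}I \cdot \sO = \bigl(\zeta^{-1}\fI \cdot \sO\bigr) \cdot \prod_{\rho \in \bE^+(\fj)}(x_\rho')^{N_\rho(\fj)/d},
\]
so $N_\rho \geq N_\rho(\fj)/d$. The gap is in your maximality argument. When $|\bE^+(\fj)| > 1$ (which is the generic situation once $Q \neq 0$), the exceptional Cartier divisor $V(t^{-1}) \subset \tBl_{\underline{J}(I)_\bullet}U$ is reducible, with one irreducible component $D_\rho$ for each $\rho \in \bE^+(\fj)$; there is no single divisorial valuation ``$\nu$ along $V(t^{-1})$''. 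Smallness of $\zeta$ gives an \emph{identification} $\nu_\rho' = \nu_{D_\rho}$ of DVRs (scaling $1$, not $N_\rho(\fj)/\ell$), and your claimed proportionality fails already in the toy model $t^{-1} = uv$ on $\Spec\kk[u,v]$: with $g = u$ one has $(t^{-1})$-adic order $0$ but $\nu_{V(u)}(u) = 1$. Consequently, the global condition $\fI \not\subset (t^{-1})$ from \ref{X:weak-transform-under-weighted-toroidal-blow-up} only says that $\fI$ is not divisible by the \emph{product} $\prod_\rho I_{D_\rho}^{N_\rho(\fj)/\ell}$; it does \emph{not} rule out $\fI \subset I_{D_\rho}$ for an individual $\rho$, which is exactly what you need for $\zeta^{-1}\fI \cdot \sO \not\subset (x_\rho')$.

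The paper closes this gap by a different mechanism: it proves (Proposition~\ref{P:local-description-weak-transform}) the exact equality $N_\rho = K_\rho = N_\rho(\fj)/d$ via the idealistic-exponent reformulation of divisibility from \S\ref{2.4}. The lower bound $K_\rho \geq N_\rho(\fj)/d$ comes from checking that $(\sR_\bullet^{[\rho]})^{N_\rho(\fj)/d} \supset \sJ(I,p)_\bullet$, which is $I$-admissible; the matching upper bound uses that $a_1 = \logord_p(I)$ together with \cite[Corollary 4.10]{quek-weighted-log-resolution} (and a monomial-saturation argument when $k = 0$). In other words, maximality along each individual $V(x_\rho')$ genuinely requires input specific to $I$ --- its log order and the admissibility of its associated center --- rather than only the abstract fact that $\fI$ is a weak transform.
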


\begin{proof}[Proof of Proposition~\ref{P:weak-transform-justification}]
If $Q = 0$, then $\Bl_{J(I)}U = \tBl_{\underline{J}(I)_\bullet}U$, and there is nothing to show. Henceforth, assume $Q \neq 0$. Let $\sR^U_\bullet$ be defined as in Section~\ref{X:rees-algebra-description}. Under the correspondence in Section~\ref{X:idealistic-exponents}, $\sR^U_\bullet$ corresponds to a tuple $\gamma^U := (\gamma^{[\rho]} \colon \rho \in \bE(\fj))$ of $\#\bE(\fj)$ idealistic exponents over $U$, where each $\gamma^{[\rho]}$ is the idealistic exponent over $U$ associated to the following integrally closed, $\NN$-graded Rees algebra on $U$: \[
    \sR^{[\rho]}_\bullet := \left(x_i^{\frac{1}{\uu_{\rho,i}}} \colon i \in [1,n],\ \uu_{\rho,i} \neq 0\right); 
\]
see Section~\ref{X:associated-idealistic-exponents}. For each $i \in [1,n]$, we also set $\gamma^{[i]}$ to be the idealistic exponent over $U$ associated to the ideal $(x_i)$ on $U$. Finally, $\gamma_I$ denotes the idealistic exponent over $U$ associated to $I|_U$. Then we have the following. 

\begin{proposition}[Local description of weak transform of $I$, explicated]\label{P:local-description-weak-transform}
With the above hypotheses and notation, we have, for every $\rho \in \bE^+(\fj)$,  \[
    N_\rho = K_\rho = \frac{N_\rho(\fj)}{d},
\]
where $K_\rho$ is the largest natural number $k_\rho$ such that $\gamma_I \geq k_\rho \cdot \gamma^{[\rho]}$. If $k \geq 1$, this number is also equal to $a_i \cdot \uu_{\rho,i}$ for every $1 \leq i \leq k$.
\end{proposition}

\begin{proof}
The equality $N_\rho = K_\rho$ can be shown by the same methods as in the proofs of Lemma~\ref{L:inclusion-vs-inequality} and Proposition~\ref{P:admissibility-criterion}. 

For the equality $N_\rho = \frac{N_\rho(\fj)}{d}$, we prove the cases $k=0$ and $k \geq 1$ separately. If $k=0$, we make the canonical identification $\bE^+(\fj) = \bE^+(\fa)$;  for every $\rho$ in that set, note that $N_\rho(\fj) = N_\rho(\fa)$. In this case, $\underline{J}(I)_\bullet$ is the integral closure in $\sO_Y[t]$ of the Rees algebra of $Q = \sM_Y(I)$. By Proposition~\ref{P:total-transform-of-monomial-ideal}\eqref{P:ttomi-2}, \[
    \pi^{-1}_{\underline{J}(I)_\bullet}Q \cdot \sO = \prod_{\rho \in \bE^+(\fa)}{\left(x_\rho'\right)^{N_\rho(\fa)}}.
\]
Since $Q \supset I$, the left-hand side contains $\pi^{-1}_{\underline{J}(I)_\bullet}I \cdot \sO$, so that $N_\rho \geq N_\rho(\fa)$ for every $\rho \in \bE^+(\fa)$. Conversely, by the definition of $N_\rho$, we have $\pi^{-1}_{\underline{J}(I)_\bullet}I \cdot \sO \subset \prod_{\rho \in \bE^+(\fj)}{(x_\rho')^{N_\rho}}$. Taking monomial saturation $\sM_U(-)$, we get \[
    \pi^{-1}_{\underline{J}(I)_\bullet}Q \cdot \sO = \sM_U\left(\pi^{-1}_{\underline{J}(I)_\bullet}I \cdot \sO\right) \subset \prod_{\rho \in \bE^+(\fj)}{\left(x_\rho'\right)^{N_\rho}}.
\]
where the  equality follows from \cite[Corollary 3.3.12]{abramovich-temkin-wlodarczyk-principalization-of-ideals-on-toroidal-orbifolds} since $\pi_{J(I)}$ is logarithmically smooth if $k=0$ (see Section~\ref{X:local-description}\eqref{rem3}). That inclusion shows that $N_\rho(\fa) \geq N_\rho$ for every $\rho \in \bE^+(\fa)$, as desired.

If $k \geq 1$, we show instead that $K_\rho = \frac{N_\rho(\fj)}{d}$. By \cite[Lemma 3.7]{quek-weighted-log-resolution}, \[
    K_\rho = \max\left\lbrace k_\rho \in \NN_{>0} \colon \left(\sR_\bullet^{[\rho]}\right)^{k_\rho} \textrm{ is $I$-admissible} \right\rbrace.
\]
By Lemma~\ref{L:numerical-data-for-j}, $N_\rho(\fj) = (a_id) \cdot \uu_{\rho,i}$ for every $1 \leq i \leq k$ and $\uu_{\rho,i} = 0$ for all $k < i \leq n-r$. Therefore, we have \[
    \left(\sR^{[\rho]}_\bullet\right)^{\frac{N_\rho(\fj)}{d}} = \left(x_1^{a_1},x_2^{a_2},\dotsc,x_k^{a_k},\left(\underline{x}_i^{\frac{N_\rho(\fj)}{\uu_{\rho,i}}} \colon n-r < i \leq n,\ \uu_{\rho,i} \neq 0\right)^{\frac{1}{d}}\right).
    \]
{Letting $\rho \mapsto \overline{\rho}$ be the one-to-one correspondence $\bE^+(\fj) \xrightarrow{\lowsimeq} \bE^+(\fa)$ in Remark~\ref{R:numerical-data-for-j}, the same remark says that $\left(\sR^{[\rho]}_\bullet\right)^{\frac{N_\rho(\fj)}{d}}$ is equal to \[
     \left(x_1^{a_1},x_2^{a_2},\dotsc,x_k^{a_k},\left(\underline{x}_i^{\frac{N_\rho(\fa)}{\uu_{\overline{\rho},i}}} \colon n-r < i \leq n,\ \uu_{\rho,i} \neq 0\right)^{\frac{1}{d}}\right), 
\]
and hence it contains $\sJ(I,p)_\bullet = \left(x_1^{a_1},x_2^{a_2},\dotsc,x_k^{a_k},Q^{1/d}\right)$, by Remark~\ref{R:admissibility-criterion-for-monomial-ideals}.} Since the latter is $I$-admissible, so is the former, whence $K_\rho \geq \frac{N_\rho(\fj)}{d}$. On the other hand, $a_1 = \logord_p(I)$, whence \cite[Corollary 4.10]{quek-weighted-log-resolution} implies that for any $k_\rho > \frac{N_\rho(\fj)}{d}$, $(\sR^{[\rho]}_\bullet)^{k_\rho}$ cannot be $I$-admissible, as desired.
\end{proof}

We return  to the proof of Proposition~\ref{P:weak-transform-justification}. Applying $\zeta^{-1}(-) \cdot \sO$ to \eqref{E:weak-transform-under-weighted-toroidal-blow-up} and then applying \eqref{E:factored-morphism-formula-for-center}, we obtain \[
    \pi_{J(I)}^{-1}I \cdot \sO = \left(\zeta^{-1}\fI \cdot \sO\right) \cdot \prod_{\rho \in \bE^+(\fj)}{\left(x_\rho'\right)^{\frac{N_\rho(\fj)}{\ell} \cdot \frac{\ell}{d}}}. 
\]
It remains to note that $\zeta^{-1}\fI \cdot \sO =: (\pi_{J(I)})_\ast^{-1}I$ and $\frac{N_\rho(\fj)}{\ell} \cdot \frac{\ell}{d}= \frac{N_\rho(\fj)}{d} = N_\rho$ for every $\rho \in \bE^+(\fj)$.
\end{proof}

The next proposition shows that the  equality in Definition~\ref{D:weak-transform-of-I} holds as well for proper transforms. 

\begin{proposition}\label{P:proper-transform-of-I}
Let $I'$ $($resp.\ $\fI')$ denote the proper transform of $I$ under $\pi_{J(I)} \colon \Bl_{J(I)}Y \to Y$ $($resp.\ $\varpi_{\underline{J}(I)_\bullet} \colon \tBl_{\underline{J}(I)_\bullet}Y \to Y)$. Then we have \[
    I' = \zeta^{-1}\fI' \cdot \sO, 
\]
where $\sO := \sO_{\Bl_{J(I)}Y}$ and $\zeta$ denotes the morphism $\Bl_{J(I)}Y \to \tBl_{\underline{J}(I)_\bullet}Y$ in Definition~\ref{D:multi-weighted-blow-up-along-center}.
\end{proposition}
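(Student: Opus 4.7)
The plan is to work locally using \eqref{E:local-description} and reduce the global equality to an ideal-theoretic commutation of pullback with saturation by a principal ideal, which will in turn follow from flatness of $\zeta$.

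In the local coordinate description of \ref{X:rees-algebra-description} and the factorization formula \eqref{E:factored-morphism-formula-for-center}, set
\[
    e \;:=\; \prod_{\rho \in \bE^+(\fj)}(x_\rho')^{N_\rho(\fj)/\ell} \;\in\; \sO,
\]
which is the image of $t^{-1}$ under $\zeta$. Each exponent is a positive integer (cf.\ Lemma~\ref{L:numerical-data-for-j}(i) when $k \geq 1$, and by inspection when $k = 0$), so $e$ is a non-zero-divisor and saturation by $e$ coincides with saturation by $\prod_{\rho \in \bE^+(\fj)}x_\rho'$. Combining \eqref{E:weak-transform-under-weighted-toroidal-blow-up} (whence $\fI' = (\fI : (t^{-1})^\infty)$ by the definition of proper transform, since $(t^{-1})$ cuts out the exceptional divisor of $\varpi$) with Definition~\ref{D:weak-transform-of-I} gives the total-transform decomposition
\[
    \pi_{J(I)}^{-1}I \cdot \sO \;=\; (\zeta^{-1}\fI \cdot \sO) \cdot e^{\ell/d},
\]
so Definition~\ref{D:proper-transform}, applied via the local étale model in \eqref{E:local-description}, yields
\[
    I' \;=\; \bigl(\zeta^{-1}\fI \cdot \sO \,:\, e^\infty\bigr), \qquad \zeta^{-1}\fI' \cdot \sO \;=\; \zeta^{-1}\bigl(\fI : (t^{-1})^\infty\bigr) \cdot \sO.
\]

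It thus suffices to prove the identity $\zeta^{-1}(\fI : (t^{-1})^\infty) \cdot \sO = (\zeta^{-1}\fI \cdot \sO : e^\infty)$. This is a general consequence of flatness of $\zeta$: for each $n \in \NN$, the left-exact sequence
\[
    0 \to (\fI : (t^{-1})^n)/\fI \to \sO_{\tBl_{\underline{J}(I)_\bullet}Y}/\fI \xrightarrow{\cdot (t^{-1})^n} \sO_{\tBl_{\underline{J}(I)_\bullet}Y}/\fI
\]
remains exact after applying $\zeta^{-1}(-) \otimes \sO$, producing $\zeta^{-1}(\fI : (t^{-1})^n) \cdot \sO = (\zeta^{-1}\fI \cdot \sO : e^n)$; passing to the directed colimit over $n$ yields the desired identity.

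The main obstacle is verifying flatness of $\zeta \colon \Bl_{J(I)}Y \to \tBl_{\underline{J}(I)_\bullet}Y$. Since $\zeta$ is logarithmically smooth (Remark~\ref{R:II-properties-of-factored-morphism}), Kato's criterion reduces flatness to integrality of the underlying map of sharp monoids, which in this setting is Satriano's minimal free resolution \cite[Definition 2.3]{satriano-canonical-artin-stacks-for-log-smooth-schemes}, and such resolutions are integral. Alternatively, one can check flatness chart by chart: on each $D_+(\sigma) \subset \Bl_{J(I)}U$ of \ref{X:explicating-multi-weighted-blow-ups}(iii), inverting $x_\sigma'$ and inspecting \eqref{E:factored-morphism-formula-for-center} exhibits $\zeta$ as a flat Kummer-type extension of the corresponding toric ring of $\tBl_{\underline{J}(I)_\bullet}U$, whence flatness follows.
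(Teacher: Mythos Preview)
Your reduction to the commutation identity
\[
\zeta^{-1}\bigl(\fI:(t^{-1})^\infty\bigr)\cdot\sO \;=\; \bigl(\zeta^{-1}\fI\cdot\sO : e^\infty\bigr)
\]
is correct, but the justification via flatness of $\zeta$ does not go through: the morphism $\zeta$ is \emph{not} flat in general, and neither of your two arguments establishes it. For the first, minimal free resolutions $\Gamma\hookrightarrow F$ in the sense of \cite[Definition~2.3]{satriano-canonical-artin-stacks-for-log-smooth-schemes} are not integral monoid homomorphisms. Already for the $A_1$-monoid $\Gamma=\langle(2,0),(1,1),(0,2)\rangle\subset\ZZ^2$, the minimal free resolution $\Gamma\hookrightarrow\NN^2$ induces the ring map $\kk[x,y,z]/(xz-y^2)\to\kk[s,t]$ sending $x\mapsto s^2$, $y\mapsto st$, $z\mapsto t^2$; the fibre over the origin is $\Spec\bigl(\kk[s,t]/(s,t)^2\bigr)$ of length $3$, while the generic fibre has length $2$, so the map is not flat and hence (by Kato's criterion) the monoid map is not integral. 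For the second argument, on a chart $D_+(\sigma)\to D_+(\pmb{x}^{\bv_\sigma}t^\ell)$ the morphism is not a Kummer covering but is exactly the Satriano map for the affine toric chart, and the same $A_1$ computation shows this fails to be flat even when $\sigma$ is simplicial. Such toric singularities genuinely appear in $\tBl_{\underline{J}(I)_\bullet}Y$; for instance in Example~\ref{EX:second-example} with $k=0$ and $\ell=1$, the maximal cone of $\Sigma_\fa$ corresponding to the $x'$-chart is not even simplicial.

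The paper argues along a different axis: it characterizes each proper transform as the schematic closure of the total transform away from the exceptional locus, verifies that $\zeta^{-1}$ carries the relevant closed substack $V(\varpi^{-1}I)$ and open substack $\tBl\smallsetminus V(\varpi^{-1}J(I))$ to their counterparts on $\Bl_{J(I)}Y$, and then appeals to the fact that $\zeta$ is \emph{closed} (from \ref{X:local-description}(i), via Remark~\ref{R:II-properties-of-factored-morphism}) to compare the two closures---thereby avoiding flatness entirely.
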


\begin{proof}
To see this, recall that $V(I')$ (resp.\ $V(\fI')$) is the smallest closed substack of $\Bl_{J(I)}Y$ (resp.\ $\tBl_{\underline{J}(I)_\bullet}Y$) containing $V(\pi_{J(I)}^{-1}I \cdot \sO) \smallsetminus V(\pi_{J(I)}^{-1}J(I) \cdot \sO)$ (resp.\ $V(\varpi_{\underline{J}(I)_\bullet}^{-1}I \cdot \sO_{\tBl_{\underline{J}(I)_\bullet}Y}) \smallsetminus V(\varpi_{\underline{J}(I)_\bullet}^{-1}J(I) \cdot \sO_{\tBl_{\underline{J}(I)_\bullet}Y})$). The proposition then follows from the following equalities: \begin{align*}
    \zeta^{-1}\left(V\left(\varpi_{\underline{J}(I)_\bullet}^{-1}I \cdot \sO_{\tBl_{\underline{J}(I)_\bullet}Y}\right)\right) &= V\left(\pi_{J(I)}^{-1}I \cdot \sO\right),  \\
    \zeta^{-1}\left(V\left(\varpi_{\underline{J}(I)_\bullet}^{-1}J(I) \cdot \sO_{\tBl_{\underline{J}(I)_\bullet}Y}\right)\right) &= V\left(\pi_{J(I)}^{-1}J(I) \cdot \sO\right) 
\end{align*}
and the fact that $\zeta$ is closed (see Section~\ref{X:local-description}\eqref{rem1}).
\end{proof}

We can finally deduce the following. 

\begin{theorem}[Invariant drops in a well-ordered set]\label{T:invariant-drops}
Let $I \subset \sO_Y$ be a proper, nowhere zero ideal, and let $I'$ be its proper transform under $\pi_{J(I)}$. Then \[
    \max\inv(I') \leq \max\inv\left((\pi_{J(I)})_\ast^{-1}I\right) < \max\inv(I), 
\]
and all three maximum invariants are contained in the well-ordered set $\NN_\infty^{\leq \dim(Y),!}$; see Section~\ref{X:well-ordered-set}.
\end{theorem}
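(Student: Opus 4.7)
The plan is to reduce Theorem~\ref{T:invariant-drops} to the analogous statement for the weighted toroidal blow-up $\varpi_{\underline{J}(I)_\bullet} \colon \tBl_{\underline{J}(I)_\bullet}Y \to Y$, which is established in \cite{quek-weighted-log-resolution}. The bridge is the canonical factorization $\pi_{J(I)} = \varpi_{\underline{J}(I)_\bullet} \circ \zeta$ of Definition~\ref{D:multi-weighted-blow-up-along-center}, together with the fact that $\zeta \colon \Bl_{J(I)}Y \to \tBl_{\underline{J}(I)_\bullet}Y$ is logarithmically smooth and surjective (Remark~\ref{R:II-properties-of-factored-morphism}, globalized as in \ref{X:local-description}(i)). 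As noted in \ref{X:ambient-spaces}, to carry this out we may (and do) pass to a smooth strict cover and assume throughout that $Y$ is a smooth, strict toroidal $\kk$-scheme.

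First I would identify the weak and proper transforms of $I$ under $\pi_{J(I)}$ with pull-backs along $\zeta$ of the corresponding transforms under $\varpi_{\underline{J}(I)_\bullet}$. Writing $\fI$ (resp.\ $\fI'$) for the weak (resp.\ proper) transform of $I$ under $\varpi_{\underline{J}(I)_\bullet}$, this is exactly the content of Definition~\ref{D:weak-transform-of-I} together with Proposition~\ref{P:proper-transform-of-I}, giving
\[
    (\pi_{J(I)})^{-1}_\ast I \;=\; \zeta^{-1}\fI \cdot \sO_{\Bl_{J(I)}Y}, \qquad I' \;=\; \zeta^{-1}\fI' \cdot \sO_{\Bl_{J(I)}Y}.
\]
Next, I would invoke the functoriality of $\inv$ for logarithmically smooth, surjective morphisms (Remark~\ref{R:log-invariant}(iii), extended from schemes to smooth toroidal Artin stacks by smooth strict descent) applied to $\zeta$. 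This yields the identifications
\[
    \max\inv\bigl((\pi_{J(I)})^{-1}_\ast I\bigr) \;=\; \max\inv(\fI), \qquad \max\inv(I') \;=\; \max\inv(\fI'),
\]
and in particular transports the statement that each maximum lies in $\NN_\infty^{\leq \dim(Y),!}$ from the (logarithmically smooth) target $\tBl_{\underline{J}(I)_\bullet}Y$, where it is already known, to $\Bl_{J(I)}Y$.

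Having translated everything to the weighted toroidal setting, it remains to quote the analogous invariant-drop theorem from \cite{quek-weighted-log-resolution} for the weighted toroidal blow-up along $\underline{J}(I)_\bullet$: namely, that $\max\inv(\fI') \leq \max\inv(\fI) < \max\inv(I)$, with all three quantities lying in $\NN_\infty^{\leq \dim(Y),!}$. Combining this with the identifications above gives the full statement. The chief obstacle in this plan is not the quoted weighted toroidal result, but rather verifying that the functoriality of $\inv$ under logarithmically smooth surjective morphisms, as formulated in \S\ref{4.1} for strict toroidal $\kk$-schemes, passes to $\zeta$ in spite of the stacky nature of $\Bl_{J(I)}Y$; the remedy is standard smooth strict descent as advertised in \ref{X:ambient-spaces}, choosing a smooth strict atlas of $\Bl_{J(I)}Y$ compatible with the one chosen on $\tBl_{\underline{J}(I)_\bullet}Y$ so that the logarithmically smooth surjection $\zeta$ lifts to a logarithmically smooth surjection between strict toroidal $\kk$-schemes.
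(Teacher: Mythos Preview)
Your proposal is correct and follows essentially the same approach as the paper: factor $\pi_{J(I)}$ through $\zeta$, use that $\zeta$ is logarithmically smooth and surjective to transport $\max\inv$ between $\Bl_{J(I)}Y$ and $\tBl_{\underline{J}(I)_\bullet}Y$ via Remark~\ref{R:log-invariant}(iii), and then quote \cite[Theorem 7.2]{quek-weighted-log-resolution} for the strict drop. The only cosmetic difference is that the paper obtains the first inequality $\max\inv(I') \leq \max\inv\bigl((\pi_{J(I)})_\ast^{-1}I\bigr)$ directly from the containment $I' \supset (\pi_{J(I)})_\ast^{-1}I$ on the multi-weighted side (Remark~\ref{R:transforms-properties}), rather than pulling both back to $\fI',\fI$ first; either route works.
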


\begin{proof}
We adopt the notation in Definition~\ref{D:weak-transform-of-I} and Proposition~\ref{P:proper-transform-of-I}. Since $\zeta$ is logarithmically smooth and surjective (see Section~\ref{X:local-description}\eqref{rem1}), we have \[
    \max\inv\left(\left(\pi_{J(I)}\right)_\ast^{-1}I\right) = \max\inv\left(\zeta^{-1}\fI \cdot \sO\right) = \max\inv(\fI) < \max\inv(I), 
\]
where the middle equality is given by Remark~\ref{R:log-invariant}\eqref{R:li-3} and the strict inequality is given by \cite[Theorem~7.2]{quek-weighted-log-resolution}. Recall from Section~\ref{X:log-invariant} that the lengths of $\max\inv(I)$ and $\max\inv(\fI)$ are bounded above by $\dim(Y) = \dim(\tBl_{\underline{J}(I)_\bullet}Y)$, and hence so is the length of $\max\inv\left((\pi_{J(I)})_\ast^{-1}I\right)$. Moreover, since $I' \supset (\pi_{J(I)})_\ast^{-1}I$ (see Remark~\ref{R:transforms-properties}), we also have $\max\inv(I') \leq \max\inv\left((\pi_{J(I)})_\ast^{-1}I\right)$. Finally, the proposition together with Remark~\ref{R:log-invariant}\eqref{R:li-3} imply that \[
    \max\inv\left(I'\right) = \max\inv\left(\zeta^{-1}\fI' \cdot \sO\right) = \max\inv\left(\fI'\right).
\]
Since the length of $\max\inv(\fI')$ is also bounded above by $\dim(\tBl_{\underline{J}(I)_\bullet}Y) = \dim(Y)$ (see Section~\ref{X:log-invariant}), so is the length of $\max\inv(I')$.
\end{proof}

\begin{xpar}[Functoriality]\label{X:functoriality}
{Given a strict, smooth, and surjective morphism $f$ from another smooth, strict toroidal $\kk$-scheme $\widetilde{Y}$ to $Y$, we have $\widetilde{Y} \times_Y \Bl_{J(I)}Y = \Bl_{J(f^{-1}I \cdot \sO_{\widetilde{Y}})}\widetilde{Y}$. Here, the fibre product is taken in the category of fs logarithmic Artin stacks.

Indeed, since $\sJ(f^{-1}I \cdot \sO_{\widetilde{Y}})_\bullet = f^{-1}\sJ(I)_\bullet \cdot \sO_{\widetilde{Y}}$ (see Section~\ref{X:associated-center}), we have $\widetilde{Y} \times_Y \tBl_{\underline{J}(I)_\bullet}Y = \tBl_{\underline{J}(f^{-1}I \cdot \sO_{\widetilde{Y}})_\bullet}\widetilde{Y}$. As a consequence, \begin{align*}
    \widetilde{Y} \times_Y \Bl_{J(I)}Y &= \left(\widetilde{Y} \times_Y \tBl_{\underline{J}(I)_\bullet}Y\right) \times_{\tBl_{\underline{J}(I)_\bullet}Y} \Bl_{J(I)}Y \\
    &= \tBl_{\underline{J}(f^{-1}I \cdot \sO_{\widetilde{Y}})_\bullet} \times_{\tBl_{\underline{J}(I)_\bullet}Y} \Bl_{J(I)}Y.
\end{align*}
Since $\Bl_{J(I)}Y$ is by definition the canonical smooth, toroidal Artin stack over $\tBl_{\underline{J}(I)_\bullet}Y$, Remark~\ref{R:satriano-construction} implies that $\widetilde{Y} \times_Y \Bl_{J(I)}Y$
is then the canonical smooth, toroidal Artin stack over $\tBl_{\underline{J}(f^{-1}I \cdot \sO_{\widetilde{Y}})_\bullet}\widetilde{Y}$ and is therefore by definition $\Bl_{J(f^{-1}I \cdot \sO_{\widetilde{Y}})}\widetilde{Y}$.}
\end{xpar}

\subsection{Proofs of the main results}\label{4.3}

Before we prove the results in the introduction, the next remark is necessary; it will be implicit in the proofs in this section. 

\begin{remark}\label{R:case-of-artin-stacks}
{Recall that $\inv_p(I)$ and the formation of $\sJ(I,p)_\bullet$ (resp.\ $\max\inv(I)$, the formation of $\sJ(I)_\bullet$, and the formation of $\Bl_{J(I)_\bullet}Y$) are functorial with respect to strict, smooth (resp.\ strict, smooth, surjective) morphisms of smooth, strict toroidal $\kk$-schemes (see Remarks~\ref{R:log-invariant} and~\ref{R:center}, and Sections~\ref{X:associated-center} and \ref{X:functoriality}).} Thus, by descent, the discussions and constructions in Section~\ref{4.1} (resp.\ in Section~\ref{4.2}) extend immediately to the case where $Y$ is a toroidal (resp.\ smooth, toroidal) Artin stack over $\kk$ (see Section~\ref{X:ambient-spaces}). Indeed, one can work on an atlas $Y_1 \rightrightarrows Y_0$ of $Y$ by strict toroidal (resp.\ smooth, strict toroidal) $\kk$-schemes, where the arrows are strict, smooth, and surjective.
\end{remark}

\begin{proof}[Proof of Theorem~\ref{T:invariant-drops-intro}]
We may assume $X \neq \emptyset$ or $Y$. Let $I$ be the underlying ideal of $X \subset Y$. We set $\pi \colon Y' \to Y$ to be $\pi_{J(I)} \colon \Bl_{J(I)}Y \to Y$. Then part~\eqref{T:idi-1} is immediate, part~\eqref{T:idi-2} follows from Theorem~\ref{T:invariant-drops}, part~\eqref{T:idi-3}  follows from Remark~\ref{R:isomorphism-above-most-singular-locus}, and part~\eqref{T:idi-4} follows from parts ~\eqref{T:idi-2} and~\eqref{T:idi-4} of Section~\ref{X:local-description} (see Remark~\ref{R:gms-of-multi-weighted-blow-up-along-center}). Finally, functoriality with respect to strict, smooth, and surjective morphisms of pairs follows from Section~\ref{X:functoriality}.
\end{proof}

\begin{proof}[Proof of Theorem~\ref{T:logarithmic-embedded-resolution}]
We may assume that $Y$ is a smooth, strict toroidal $\kk$-scheme and that $X \neq \emptyset$ or $Y$. We define $\Pi$ inductively. After the $k^{\textrm{th}}$ step of the algorithm (\textit{i.e.} we have defined $Y_k \xrightarrow{\pi_k} Y_{k-1} \xrightarrow{\pi_{k-1}} \dotsb \xrightarrow{\pi_1} Y_0 = Y$ with proper transforms $X_i \subset Y_i$ of $X$), we undertake the following steps for the $(k+1)^{\textrm{st}}$ step: \begin{enumerate}
    \item\label{step1} If $\max\inv(X_k \subset Y_k) = (1,1,\dotsc,1)$ of some length $c$, we \emph{\underline{claim}} that the locus $C_k$ consisting of points $p \in \abs{Y_k}$ such that $\inv_p(X_k \subset Y_k) = \max\inv(X_k \subset Y_k) = (1,1,\dotsc,1)$ (of length $c$) is both open and closed in $X_k$ and hence is a smooth connected component of $X_k$. We admit this \emph{\underline{claim}} for now, and postpone its proof.
    
    If $C_k = X_k$, we then stop at the $k^{\textrm{th}}$ step. If $C_k \neq X_k$ and $\max\inv(X_k \smallsetminus C_k \subset Y_k) = (1,1,\dotsc,1)$ of some length $c' > c$, we repeat step~\eqref{step1}  with $X_k \subset Y_k$ replaced by $X_k \smallsetminus C_k \subset Y_k$. Otherwise, we proceed to step~\eqref{step2}  with $X_k \subset Y_k$ replaced by $X_k \smallsetminus C_k \subset Y_k$.
    
    \item\label{step2}  If $\max\inv(X_k \subset Y_k) \neq (1,1,\dotsc,1)$ of any length $c$, we apply Theorem~\ref{T:invariant-drops-intro} to $X_k \subset Y_k$, which gives us $\pi_{k+1} \colon Y_{k+1} \to Y_k$ and a proper transform $X_{k+1} \subset Y_{k+1}$ of $X_k$ which satisfies $\max\inv(X_{k+1} \subset Y_{k+1}) < \max\inv(X_k \subset Y_k)$.
\end{enumerate}
Under this procedure, observe that at every point $p$ of $X$, the invariant of proper transforms $X_i \subset Y_i$ at points $p'$ above $p$ must eventually drop to $(1,1,\dotsc,1)$ of some length and, moreover, cannot drop to $(0)$ without first dropping to $(1,1,\dotsc,1)$ of some length. This is because $X$ is reduced and generically toroidal, and therefore so are the proper transforms $X_i$ of $X$ (see Remark~\ref{R:transforms-properties}). Since the lengths of these invariants are bounded above by $\dim(Y)$\footnote{If $Y$ is more generally an Artin stack, note that the lengths of these invariants are only bounded above by $\dim(Y_0)$ for any atlas $Y_1 \rightrightarrows Y_0$ of $Y$ by smooth, strict toroidal $\kk$-schemes.} (see the last statement in Theorem~\ref{T:invariant-drops}), this procedure eventually terminates to the desired $\Pi$.

{Finally, if $f \colon \widetilde{Y} \to Y$ is a strict, smooth morphism of smooth, toroidal Artin stacks over $\kk$} and the logarithmic embedded resolution of $X \subset Y$ is $\Pi \colon Y_N \xrightarrow{\pi_N} Y_{N-1} \xrightarrow{\pi_{N-1}} \dotsb \xrightarrow{\pi_1} Y_0 = Y$, then it follows from the functoriality in Theorem~\ref{T:invariant-drops-intro} that the logarithmic embedded resolution of $X \times_Y \widetilde{Y} \subset \widetilde{Y}$ agrees step-by-step with the pull-back of $\Pi$ along $f \colon \widetilde{Y} \to Y$: \[
    Y_N \times_Y \widetilde{Y} \xrightarrow{f^\ast\pi_N} Y_{N-1} \times_Y \widetilde{Y} \xrightarrow{f^\ast\pi_{N-1}} \dotsb \xrightarrow{f^\ast\pi_1} \widetilde{Y}
\]
after removing any $f^\ast\pi_i$ which are empty blow-ups, which may occur whenever $f$ is not surjective.
\end{proof}

\begin{proof}[Proof of the \underline{claim}]
We may assume that $Y$ is a smooth, strict toroidal $\kk$-scheme. Let $p \in C_k$, and let $x_1,\dotsc,x_c$ be ordinary parameters associated to $I$ at $p$, which are defined on some open $U \subset Y_k$. Then $\sJ(I,p)_\bullet$ is simply {the} Rees algebra associated to the ideal $J(I,p) = (x_1,\dotsc,x_c)$, and we have \begin{enumerate}[label=(\alph*),ref=\alph*]
    \item\label{pf-a} $I_p \subset (x_1,\dotsc,x_c)$ (see Remark~\ref{R:center}).
    \item\label{pf-b}  By the description in Section~\ref{X:well-ordered-set}, note that for $p' \in \abs{U} \cap \abs{X_k}$, we have $\inv_{p'}(I) = (a_1,\dotsc,a_\ell) < \max\inv(I) = (1,1,\dotsc,1)$ (of length $c$) if and only if \[
        \inv_{p'}(I) = (\overbrace{1,1,\dotsc,1}^{\textup{length $c$}},a_{c+1},\dotsc,a_\ell) \quad \textrm{ with } \ell > c.
    \]
    By Section~\ref{X:associated-data}\eqref{X:ad-2}, that happens if and only if $I_{p'}|_{V(x_1,\dotsc,x_c)} \neq 0$, \textit{i.e.} $I_{p'} \not\subset (x_1,\dotsc,x_c)$.
\end{enumerate}
Set $U' := (U \cap X_k) \smallsetminus V\left((x_1,\dotsc,x_c):I\right)$. Then $U'$ is open in $X_k$, contains the point $p$ (by \eqref{pf-a}), and is moreover contained in $C_k$ (by \eqref{pf-b}). Since $p \in \abs{C_k}$ was arbitrary, we conclude that $C_k$ is open in $X_k$. But $C_k$ is also closed in $X_k$, by the upper semi-continuity of $\inv$ (Remark~\ref{R:log-invariant}).
\end{proof}

\begin{remark}
Note that the proof of Theorem~\ref{T:logarithmic-embedded-resolution} simplifies if one assumes $X \subset Y$ is of pure codimension~$c$. In that case, one iterates Theorem~\ref{T:invariant-drops-intro} till $\max\inv(X_k \subset Y_k) = (1,1,\dotsc,1)$ of length $c$, and the procedure terminates. Indeed, $C_k = X_k$ in \eqref{step1} of the proof of Theorem~\ref{T:logarithmic-embedded-resolution} since they both contain the dense open $X^{\logsm}$ and are both of pure codimension $c$ in $Y_k$.
\end{remark}

\begin{proof}[Proof of Theorem~\ref{T:re-embedding-principle}]
We may assume that $Y$ is a smooth, strict toroidal $\kk$-scheme. Note that $I_1 = (x_0) + I$ and $\sD^{\leq 1}_Y(I_1) = (1)$ with maximal contact element $x_0$ everywhere, so that $I[2] = I_1|_{V(x_0)=Y} = I$. Thus, part~\eqref{T:rep-1} follows by the definition of $\inv$. For part~\eqref{T:rep-2}, part\eqref{T:rep-1} implies that there is a bijection \[
    \lbrace p \in Y \colon \inv_p(I) = \max\inv(I) \rbrace \xrightarrow{\;\lowsimeq\;} \lbrace p_1 \in Y_1 \colon \inv_{p_1}(I_1) = \max\inv(I_1) \rbrace
\]
which sends $p$ to $(p,0)$. Moreover, for any such $p \in Y$, if \begin{align*}
    \sJ(I,p)_\bullet &= \left(x_1^{a_1},x_2^{a_2},\dotsc,x_k^{a_k},Q^{1/d}\right), \\
    J(I,p) &= \textrm{integral closure of } \left(x_1^{a_1d},x_2^{a_2d},\dotsc,x_k^{a_kd},Q\right)
\end{align*}
as in Sections~\ref{X:center} and \ref{X:dth-Veronese-subalgebra}, then \begin{align*}
    \sJ\left(I_1,(p,0)\right)_\bullet &= \left(x_0,x_1^{a_1},\dotsc,x_k^{a_k},Q^{1/d}\right), \\
    J\left(I_1,(p,0)\right) &= \textrm{integral closure of } \left(x_0^d,x_1^{a_1d},x_2^{a_2d},\dotsc,x_k^{a_kd},Q\right).
\end{align*}
Thus, Corollary~\ref{C:proper-transform-of-first-coordinate-hyperplane} implies that $Y'$ is canonically identified with the proper transform $V(x_0')$ of $Y = V(x_0) \subset Y_1$ in $Y_1'$. Moreover, if $I'$ (resp.\ $I_1'$) denotes the underlying ideal of $X' \subset Y'$ (resp.\ $X_1' \subset Y_1'$), then $I_1' = (x_0') + I'$, so $X' = X_1'$.
\end{proof}

\begin{proof}[Proof of Theorem~\ref{T:logarithmic-resolution}]
We embed $X$ smooth locally in a smooth $\kk$-scheme $Y$ \emph{in pure codimension} and apply Theorem~\ref{T:logarithmic-embedded-resolution} to the pair $(X \subset Y)$, where $X \subset Y$ are given the trivial logarithmic structures, to obtain a local logarithmic resolution of $X$. It remains to show these local logarithmic resolutions of $X$ are compatible, in the sense that they do not depend on the choice of local embeddings, and that they glue.

For this, it suffices to show that given two closed embeddings of $X$ into two smooth, pure-dimensional Artin stacks $Y_i$ ($i=1,2$) over $\kk$, the logarithmic resolutions of $X$ obtained from $\sF_{\ler}(X \subset Y_i)$ ($i=1,2$) coincide. Firstly, if $\dim(Y_1) = \dim(Y_2)$, then the two closed embeddings $X \inj Y_i$ are smooth locally isomorphic, so by the functoriality in Theorem~\ref{T:logarithmic-embedded-resolution}, the resulting logarithmic resolutions of $X$ obtained from $\sF_{\ler}(X \subset Y_i)$ coincide. In general, one reduces to the earlier case, by repeatedly applying Theorem~\ref{T:re-embedding-principle}.
\end{proof}

\section{Examples and further remarks}\label{C:further-remarks}

\subsection{Examples}\label{5.1}

Throughout this section, we freely adopt the notation introduced in Section~\ref{3.2} and throughout Section~\ref{C:log-resolution-via-multi-weighted-blow-ups}.

\begin{example}\label{EX:newton-non-degenerate}
Let $Y = \AA^{3;3} = \Spec(\NN^3 \to \kk[\underline{x},\underline{y},\underline{z}])$, and consider the following hypersurface: \[
    X = V(I) = V(f) := V\left(\underline{x}^2+\underline{y}^2\underline{z}+\underline{z}^3\right) \subset Y.
\]
Then $\max\inv(I) = (\infty)$, and $J(I)$ is the integral closure of $\sM_Y(I) = (\underline{x}^2,\underline{y}^2\underline{z},\underline{z}^3)$. Let $\pi \colon Y' := \Bl_{J(I)}Y \to Y$, which was explicated in Example~\ref{EX:second-example}. By the equations therein, the total transform of $I$ is \begin{equation}\label{E:newton-non-degenerate}
    \pi^{-1}I \cdot \sO_{Y'} = \underline{u}_1^6\underline{u}_2^2 \cdot \underbrace{\left(\underline{x}'^2+\underline{y}'^2\underline{z}'+\underline{z}'^3\underline{u}_2^4\right)}_{\textrm{proper transform $I'$}}.
\end{equation}
Finally, $\sD^{\leq 1}(I') = (\underline{x}'^2,\underline{y}'^2\underline{z}',\underline{z}'^3\underline{u}_2^4)$, which is the unit ideal on the $\underline{x}'$-chart, $\underline{y}'\underline{z}'$-chart, and $\underline{z}'\underline{u}_2$-chart of $Y'$. Therefore, $\max\inv(I') = (1) < (\infty) = \max\inv(I)$, and we get logarithmic resolution of singularities in one step. 
\end{example}

The above is an example of a \emph{Newton non-degenerate} polynomial; see \cite[Section 2]{kouchnirenko-newton-polytopes}. We recall this notion here. Let $f = \sum_{\ba \in \NN^n}{c_\ba \cdot \pmb{x}^\ba} \in \kk[x_1,\dotsc,x_n]$ be a non-constant polynomial such that $f(0) = 0$. For every face $\tau$ of the Newton polyhedron $P_f$ of the ideal $(f)$ (see Definition~\ref{D:newton-polyhedron-of-ideal}), we set $f_\tau := \sum_{\ba \in \NN^n \cap \tau}{c_\ba \cdot \pmb{x}^\ba}$. We say $f$ is Newton non-degenerate if for every face\footnote{This includes $P_f$, the $n$-dimensional face of itself.} $\tau$ of $P_f$, $V(f_\tau) \subset \AA^n$ is non-singular in the torus $\Gm^n \subset \AA^n$. This condition guarantees that the singularity theory of $V(f) \subset \AA^n$ is, to a certain extent, governed by its Newton polyhedron. Indeed, Example~\ref{EX:newton-non-degenerate} is manifested by a general phenomenon that was earlier observed in \cite[Proposition 8.31]{bultot-nicaise-log-smooth-models}.\footnote{We thank Johannes Nicaise for bringing this to our attention.} 

\begin{theorem}\label{T:newton-non-degenerate}
Let $f \in \kk[x_1,\dotsc,x_n]$ be a Newton non-degenerate polynomial, and assume $x_i$ does not divide $f$ for every $1 \leq i \leq n$. Then the multi-weighted blow-up of $\AA^n$ along the monomial saturation $\fa_{(f)}$ of\, $(f)$ $($see Definition~\ref{D:newton-polyhedron-of-ideal}\,$)$ is a logarithmic embedded resolution of singularities for $V(f) \subset \AA^{n;n} = \Spec(\NN^n \to \kk[\underline{x}_1,\dotsc,\underline{x}_n])$. 

In other words, our logarithmic embedded resolution algorithm in  Theorem~\ref{T:logarithmic-embedded-resolution}, applied to the pair $V(f) \subset \AA^{n;n} = \Spec(\NN^n \to \kk[\underline{x}_1,\dotsc,\underline{x}_n])$, terminates after one step.
\end{theorem}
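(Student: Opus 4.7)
The plan is to work directly on the smooth cover $X_{\widehat{\Sigma}_{\fa_{(f)}}}$ of $\Bl_{\fa_{(f)}}\AA^n$, compute the proper transform $\widetilde{f}$ of $f$ under $\pi := \pi_{\fa_{(f)}}$, and verify that $V(\widetilde{f})$ is logarithmically smooth of codimension one at every point. By Remark~\ref{R:log-invariant}(iv), this gives $\max\inv(V(\widetilde{f}) \subset \Bl_{\fa_{(f)}}\AA^{n;n}) \leq (1)$, and so the algorithm of Theorem~\ref{T:logarithmic-embedded-resolution} terminates after this single step. Since no $x_i$ divides $f$, we have $N_i(\fa_{(f)}) = 0$ for $i \in [1,n]$, and Proposition~\ref{P:total-transform-of-monomial-ideal}(ii) yields
\[
\pi^*(f) = \Bigl(\prod_{\rho \in \bE(\fa_{(f)})} (x_\rho')^{N_\rho(\fa_{(f)})}\Bigr) \cdot \widetilde{f}, \qquad \widetilde{f} = \sum_{\ba} c_\ba \prod_{i=1}^n (x_i')^{a_i} \prod_{\rho \in \bE(\fa_{(f)})} (x_\rho')^{\bu_\rho \cdot \ba - N_\rho(\fa_{(f)})}.
\]

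I would then check log-smoothness stratum-by-stratum along the toric stratification. For a cone $\tau \in \Sigma_{\fa_{(f)}}$ with associated torus orbit $T_\tau \subset X_{\widehat{\Sigma}_{\fa_{(f)}}}$, the restriction $\widetilde{f}|_{T_\tau}$ retains exactly those monomials indexed by $\ba$ lying on the face $F_\tau \subset P_f$ corresponding to $\tau$, all other monomials being killed by some positive-power factor $(x_v')^e$ with $v \in \tau$ and $e > 0$. Fixing a vertex $\ba_0 \in F_\tau$ and factoring out a nowhere-vanishing Laurent monomial on $T_\tau$, one arrives at
\[
\widetilde{f}|_{T_\tau} \;=\; (\text{unit}) \cdot \sum_{\ba \in F_\tau \cap \Supp(f)} c_\ba \prod_{i} (x_i')^{a_i - a_{0,i}} \prod_{\rho} (x_\rho')^{\bu_\rho \cdot (\ba - \ba_0)},
\]
in which the second factor is identified with the pullback of $f_{F_\tau}/y^{\ba_0}$ along a smooth, surjective toric morphism $T_\tau \twoheadrightarrow V_{F_\tau}$ to the face torus $V_{F_\tau}$ of $F_\tau$ (whose character lattice is $\tau^\perp \cap \ZZ^n$). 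This morphism is induced by the lattice map $\ba \mapsto (a_i)_i \oplus (\bu_\rho \cdot \ba)_\rho$ restricted to $\tau^\perp \cap \ZZ^n$.

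The Newton non-degeneracy of $f$ at the face $F_\tau$ is exactly the assertion that $V(f_{F_\tau}/y^{\ba_0})$ is smooth in $V_{F_\tau}$. Pulling back along the smooth morphism $T_\tau \twoheadrightarrow V_{F_\tau}$ shows $V(\widetilde{f}) \cap T_\tau$ is smooth in $T_\tau$ for every $\tau \in \Sigma_{\fa_{(f)}}$. At any point $p' \in V(\widetilde{f})$ lying in $T_\tau$, a logarithmic derivation $x_v' \partial_{x_v'}$ for some $v \notin \tau$ witnesses this smoothness at $p'$; being a global logarithmic derivation on $X_{\widehat{\Sigma}_{\fa_{(f)}}}$, it simultaneously witnesses $\logord_{p'}(\widetilde{f}) = 1$, as desired.

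The main obstacle lies in the middle step: the combinatorial identification of $\widetilde{f}|_{T_\tau}$ as a pullback of $f_{F_\tau}$ along a smooth toric morphism of tori. The key technical ingredient is that the linear map $\ba \mapsto (a_i, \bu_\rho \cdot \ba)$ restricts to a \emph{saturated} inclusion $\tau^\perp \cap \ZZ^n \hookrightarrow \ZZ^{\Sigma_{\fa_{(f)}}(1) \setminus \tau}$, which one obtains as a nested pair of saturations derived from the saturated injection $\beta^\vee : \ZZ^n \hookrightarrow \ZZ^{\Sigma_{\fa_{(f)}}(1)}$ (the latter being saturated because $\beta$ is surjective). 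Once this saturation is in hand, the induced toric morphism is automatically smooth and surjective, and the identification becomes straightforward.
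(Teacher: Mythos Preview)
Your proof is correct and follows essentially the same strategy as the paper: compute the proper transform $\widetilde{f}$, check orbit-by-orbit that its restriction to each torus orbit is (up to a unit) $f_\tau$ for the corresponding face $\tau$, and invoke Newton non-degeneracy; the passage from orbit-wise smoothness to $\logord_{p'}(\widetilde{f}) = 1$ via the global log derivations $x_v'\partial_{x_v'}$ is exactly the right closing step (the paper leaves this implicit). The one difference is in how the identification is made: the paper works on the quotient stack $\Bl_\fa\AA^n$ and uses \cite[Lemma~1.3.1]{quek-rydh-weighted-blow-up} to set $x_\rho' = 1$ for $\rho \notin \sigma$, so that on the reduced chart the orbit $[O(\sigma)/\Gm^{\bE(\fa)}]$ becomes $[\Spec(\kk[(x_i')^{\pm} : i \notin [1,n]^\circ])/\Gm^{\bE^\circ(\fa)}]$ and $\widetilde{f}|_{O(\sigma)}$ is literally $\sum_{\ba \in \tau} c_\ba (\pmb{x}')^\ba = f_\tau(\pmb{x}')$, with no further work needed. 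Your route via the saturated lattice inclusion $\tau^\perp \cap \ZZ^n \hookrightarrow \ZZ^{\Sigma_\fa(1)\smallsetminus\tau(1)}$ and the resulting smooth torus map $T_\tau \to V_{F_\tau}$ is correct but is a longer path to the same identification; the paper's chart simplification buys you this for free.
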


{We remark that we impose the condition that $x_i$ does not divide $f$ for every $1 \leq i \leq n$, so that $V(f) \subset \AA^{n;n}$ is generically toroidal and hence satisfies the hypotheses of Theorem~\ref{T:logarithmic-embedded-resolution}. The same proof below, with some minor modifications, continues to work if one drops that condition.}

\begin{proof}
{Let $\fa := \fa_{(f)}$, and $\pi_\fa \colon \Bl_\fa\AA^n = \left[X_{\widehat{\Sigma}_\fa} \q \Gm^{\bE(\fa)}\right] \to \AA^n$. Let $\overline{\sigma}$ be an arbitrary cone in $\widehat{\Sigma}_\fa$, and let $\sigma$ denote its image under the morphism $\beta \colon \ZZ^{\Sigma_\fa(1)} \to \ZZ^n$ which sends $\be_\rho$ to $\bu_\rho$ for every $\rho \in \Sigma_\fa(1)$ (see Definition~\ref{D:multi-weighted-blow-up}). By the definition of $\widehat{\Sigma}_\fa$, there is a \emph{smallest} cone $\sigma'$ in $\Sigma_\fa$ such that $\sigma$ is a sub-cone of $\sigma'$. Let $\tau$ be the face of $P_\fa = P_f$ corresponding to $\sigma'$ (see Section~\ref{X:conventions-2.1}). If $O(\sigma)$ denotes the $\Gm^{\widehat{\Sigma}_\fa(1)}$-orbit of $X_{\widehat{\Sigma}_\fa}$ corresponding to $\overline{\sigma}$, we claim the proper transform of $V(f) \subset \AA^n$ under $\pi$ is non-singular on the $\left(\Gm^{\widehat{\Sigma}_\fa(1)} \q \Gm^{\bE(\fa)}\right)$-orbit $\left[O(\sigma) \q \Gm^{\bE(\fa)}\right] \subset \left[X_{\widehat{\Sigma}_\fa} \q \Gm^{\bE(\fa)}\right] = \Bl_\fa\AA^n$. This claim proves the proposition since $X_{\widehat{\Sigma}_\fa} = \bigsqcup_{\overline{\sigma} \in \widehat{\Sigma}_\fa}{O(\sigma)}$. We prove the claim in three steps.

\begin{xpar}[Step 1]\label{X:newton-step-1}
Let $U_\sigma$ denote the affine toric variety associated to the cone $\overline{\sigma}$ in $\widehat{\Sigma}_\fa$. By \eqref{E:presentation-of-multi-weighted-blow-up} and \eqref{E:multi-graded-rees-algebra}, $D_+(\sigma) := \left[U_{\sigma} \q \Gm^{\bE(\fa)}\right] \subset \Bl_\fa\AA^n$ is \[
    \left[\GSpec_{\AA^n}\left(\frac{\sO_{\AA^n}[x_1',\dotsc,x_n']\left[x_\rho' \colon \rho \in \bE(\fa)\right]\left[(x_\sigma')^{-1}\right]}{\left(x_i' \cdot \prod_{\rho \in \bE(\fa)}{\left(x_\rho'\right)^{\uu_{\rho,i}}} - x_i \colon 1 \leq i \leq n\right)}\right) \q \Gm^{\bE(\fa)} \right], 
\]
where \begin{equation}\label{E:newton-step-1a}
    x_\sigma' \; := \prod_{\rho \in \Sigma_\fa(1) \smallsetminus \sigma(1)}{x_\rho'} \quad \textrm{with }  \sigma(1) := \lbrace \rho \in \Sigma_\fa(1) \colon \rho \subset \sigma \rbrace.
\end{equation}
Next, by {Lemma~\ref{L:slicing}}, the assignment $x_\rho' \mapsto 1$ for every $\rho \in \bE(\fa) \smallsetminus \sigma(1)$ identifies $D_+(\sigma)$ with \begin{equation}\label{E:newton-step-1b}
    \left[\GSpec_{\AA^n}\left(\frac{\sO_{\AA^n}[x_1',\dotsc,x_n']\left[x_\rho' \colon \rho \in \bE(\fa) \cap \sigma(1) \right]\left[(x_\sigma')^{-1}\right]}{\left(x_i' \cdot \prod_{\rho \in \bE(\fa) \cap \sigma(1)}{\left(x_\rho'\right)^{\uu_{\rho,i}}} - x_i \colon 1 \leq i \leq n\right)}\right) \q \Gm^{\bE(\fa) \cap \sigma(1)} \right], 
\end{equation}
where we redefine $x_\sigma'$ as \[
    x_\sigma' := \prod_{i \in [1,n] \smallsetminus \sigma(1)}{x_i'}.
\]
\end{xpar}

\begin{xpar}[Step 2]\label{X:newton-step-2}
Write $f = \sum_{\ba \in \NN^n}{c_\ba \cdot \pmb{x}^\ba}$. By Proposition~\ref{P:total-transform-of-monomial-ideal}\eqref{P:ttomi-1}, the total transform of $f$ on \eqref{E:newton-step-1b} is \[
    f = \prod_{\rho \in \bE(\fa) \cap \sigma(1)}{\left(x_\rho'\right)^{N_\rho(\fa)}} \cdot \underbrace{\left(\sum_{\ba \in \NN^n}{c_\ba \cdot (\pmb{x}')^\ba \cdot \prod_{\rho \in \bE(\fa) \cap \sigma(1)}{\left(x_\rho'\right)^{(\ba \cdot \bu_\rho) - N_\rho(\fa)}}}\right)}_{\textrm{proper transform $f'$}}.
\]
Let us record two essential observations about $f'$: \begin{enumerate}
    \item\label{X:ns2-1} If $\ba \in \NN^n \cap \tau$, then for every $\rho \in \sigma(1)$, we have $\ba \in \tau \subset H_\rho$, \textit{i.e.} $\ba \cdot \bu_\rho = N_\rho(\fa)$. In particular, if $\rho = i \in [1,n] \cap \sigma(1)$, we note separately that this means $a_i = 0$.
    \item\label{X:ns2-2} If $\ba \in \NN^n \smallsetminus \tau$, there exists  a $\rho \in \sigma(1)$ such that $\ba \in \NN^n \smallsetminus H_\rho$,  \textit{i.e.} $\ba \cdot \bu_\rho > N_\rho(\fa)$. This is because $\tau = \bigcap_{\rho \in \sigma'(1)}{H_\rho} = \bigcap_{\rho \in \sigma(1)}{H_\rho}$.
\end{enumerate}
\end{xpar}

\begin{xpar}[Step 3]\label{X:newton-step-3}
Finally, \[
    \left[O(\sigma) \q \Gm^{\bE(\fa)}\right] = V\left(x_\rho' \colon \rho \in \sigma(1)\right) \xhookrightarrow{\textrm{closed}} D_+(\sigma).
\]
Combining the above with \eqref{E:newton-step-1b}, we get the identification \begin{equation}\label{E:newton-step-3}
    \left[O(\sigma) \q \Gm^{\bE(\fa)}\right] = \left[\Spec\left(\kk\left[\left(x_i'\right)^\pm \colon i \in [1,n] \smallsetminus \sigma(1)\right]\right) \q \Gm^{\bE(\fa) \cap \sigma(1)}\right].
\end{equation}
Moreover, by Section~\ref{X:newton-step-2}\eqref{X:ns2-1}--\eqref{X:ns2-2}, the restriction of $f'$ to \eqref{E:newton-step-3} is \[
    \sum_{\ba \in \NN^n \cap \tau}{c_\ba \cdot (\pmb{x}')^\ba}.
\]
Since the above expression matches that of $f_\tau$, the claim follows.
\end{xpar}}
\end{proof}

The next three examples (Examples~\ref{EX:newton-non-degenerate-modified-1}, \ref{EX:newton-non-degenerate-modified-2}, \ref{EX:newton-non-degenerate-modified-3}) will revisit the same hypersurface $X = V(I) = V(f) := V(x^2+y^2z+z^3) \subset \AA^3$ from before, but we explore what happens if we vary the toroidal logarithmic structure on $\AA^3$.

\begin{example}\label{EX:newton-non-degenerate-modified-1}
Consider $Y = \AA^{3;2} = \Spec(\NN^2 \to \kk[x,\underline{y},\underline{z}])$. Then we have $\max\inv(I) = (2,\infty)$ and $J(I) = (x^2,\underline{y}^2\underline{z},\underline{z}^3)$. The multi-weighted blow-up $\pi \colon Y' := \Bl_{J(I)}Y \to Y$ is schematically the same as the one in Example~\ref{EX:newton-non-degenerate}, and we still have \eqref{E:newton-non-degenerate} (but $x'$ is no longer underlined) and logarithmic resolution of singularities in one step.
\end{example}

\begin{example}\label{EX:newton-non-degenerate-modified-2}
Next, consider $Y = \AA^{3;0}$ (trivial logarithmic structure). Then $\max\inv(I) = (2,3,3)$, and $\underline{J}(I)_\bullet = (x^{1/3},y^{1/2},z^{1/2})$. The multi-weighted blow-up $\pi \colon Y' := \Bl_{J(I)}Y \to Y$ is the weighted blow-up of Example~\ref{EX:weighted-blow-ups}. By the equations therein, the total transform of $I$ is \[
        \qquad \pi^{-1}I \cdot \sO_{Y'} = \underline{u}^6 \cdot \underbrace{\left(x'^2+y'^2z'+z'^3\right)}_{\textrm{proper transform $I'$}}.
    \]
    We have $\sD^{\leq 1}(I') = (x',y'z',y'^2+3z'^2,z'^3)$, which is the unit ideal on the $x'$-chart, $y'$-chart, and $z'$-chart of~$Y'$. Thus, we have $\max\inv(I') = (1) < (\infty) = \max\inv(I)$, \textit{i.e.} logarithmic resolution of singularities in one step. 
\end{example} 

\begin{remark}\label{R:newton-non-degenerate-modified-2}
Example~\ref{EX:newton-non-degenerate-modified-2} is also part of a more general phenomenon: namely, $X = V(I)$ has a $(3,2,2)$-weighted homogeneous isolated singularity at $0 \in \AA^3$, and hence its singularities are resolved after the $(3,2,2)$-weighted blow-up of $\AA^3$ in $0$. From the viewpoint of the \emph{monodromy conjecture} of Denef--Loeser, see \cite{denef-loeser-zeta-functions}, this resolution is ``more minimal'' than the one in Example~\ref{EX:newton-non-degenerate} since it has one less exceptional divisor, namely the one corresponding to the \emph{$B_1$-facet}, see \cite[Definition 3]{lemahieu-van-proeyen-nondegenerate-surface-singularities}, of the Newton polyhedron of~$(f)$. This begs the question of whether in general and to what extent Theorem~\ref{T:newton-non-degenerate} can be refined in this direction. This is pursued in \cite{quek-motivic-monodromy-conjecture}.
\end{remark}
    
\begin{example}\label{EX:newton-non-degenerate-modified-3}
Finally, consider $Y = \AA^{3;1} = \Spec(\NN \to \kk[x,y,\underline{z}])$. Then $\max\inv(I) = (2,\infty)$ and $J(I) = (x^2,\underline{z})$. Then $\pi \colon Y' := \Bl_{J(I)}Y \to Y$ is \[
    \left[\GSpec_{\AA^3}\left(\frac{\sO_{\AA^3}[x',\underline{z'},\underline{u}]}{(x'u-x,\underline{z}'\underline{u}^2-\underline{z})}\right) \smallsetminus V(x',z') \q \Gm\right] \lra \AA^{3;1},
\]
so the total transform of $I$ under $\pi$ is \[
    \pi^{-1}I \cdot \sO_{Y'} = \underline{u}^2 \cdot \underbrace{\left(x'^2+y^2\underline{z}'+\underline{z}'^3\underline{u}^4\right)}_{\textrm{proper transform $I'$}}.
\]
Note that $V(I') \subset Y'$ is non-singular in every chart except the $\underline{z}'$-chart of $Y'$. Nevertheless, we have $\max\inv(I') = (2,2,\infty) < (2,\infty) = \max\inv(I)$, and $J(I)$ is the integral closure of $(x'^2,y^2,\underline{u}^4)$. The composition $\pi' \colon Y'' := \Bl_{\underline{J}(I')}Y' \to Y' \xrightarrow{\pi} Y$ is \[
    \text{\small $\left[\GSpec_{\AA^3}\left(\frac{\sO_{\AA^3}[x'',y',\underline{z}',\underline{u}',\underline{v}]}{(x''\underline{u}'v^3 - x, y'\underline{v}^2 - y, z'\underline{u}'^2\underline{v}^2 - z)}\right) \smallsetminus V\left(x''\underline{v},\underline{z}' (x'',y',\underline{u}')\right) \q \Gm^2\right] \lra \AA^{3;1}$}, 
\]
and the total transform of $I$ under $\pi'$ is \[
    \pi'^{-1}I \cdot \sO_{Y''} = \underline{u}'^2\underline{v}^6 \cdot \underbrace{\left(x''^2 + y'^2\underline{z}' + \underline{z}'^3\underline{u}'^4\right)}_{\textrm{proper transform $I''$}}.
\]
We have $\sD^{\leq 1}(I'') = (x'',y'\underline{z}',\underline{z}'^3\underline{u}_2^4)$, which is the unit ideal on every chart of $Y''$. Thus, $\max\inv(I'') = (1) < (2,2,\infty) = \max\inv(I')$, and we get logarithmic resolution of singularities in two steps.
\end{example}

\begin{remark}\label{R:newton-non-degenerate-modified-3}
Note that the Newton polyhedron of the first center $J(I) = (x^2,\underline{z}) \subset \kk[x,y,\underline{z}]$ in Example~\ref{EX:newton-non-degenerate-modified-3} contains the $B_1$-facet of the Newton polyhedron of $(f)$. As mentioned in the preceding remark, $B_1$-facets are known to be ``problematic'' from the viewpoint of the monodromy conjecture;  see \cite{lemahieu-van-proeyen-nondegenerate-surface-singularities}. Indeed, we saw above that the first multi-weighted blow-up in Example~\ref{EX:newton-non-degenerate-modified-3} did not completely resolve the singularities of $X = V(I) \subset Y$. 
\end{remark}



\subsection{Reduction of stabilizers and destackification}\label{5.2}

In this section we sketch how one can refine Theorem~\ref{T:logarithmic-resolution} further to obtain a smooth $\kk$-scheme at the end, instead of a smooth Artin stack over $\kk$. The first ingredient is a special case of Edidin--Rydh's \cite[Theorem 2.11]{edidin-rydh-reduction-of-stabilizers}. For the definitions of saturated blow-ups and strong morphisms, see \cite[Definition 3.2]{edidin-rydh-reduction-of-stabilizers} and \cite[Definition 6.8]{edidin-rydh-reduction-of-stabilizers}.

\begin{theorem}[Reduction of stabilizers: Smooth, toroidal case]\label{T:reduction-of-stabilizers}
Let $X$ be a smooth Artin stack over $\kk$ that admits a good moduli space $\bX$, has affine diagonal, and has no generic stackiness. Let $E \subset X$ be a snc divisor. Then there exists a canonical sequence of saturated blow-ups of Artin stacks $\Phi \colon X_N \xrightarrow{\phi_N} X_{N-1} \xrightarrow{\phi_{N-1}} \dotsb \xrightarrow{\phi_1} X_0 = X$ along smooth, closed substacks $C_i \subset X_i$, together with snc divisors $E_i \subset X_i$ with $E_0 = E$, such that the following hold: \begin{enumerate}
    \item Each $X_i$ is a smooth Artin stack over $\kk$ admitting a good moduli space $X_i \xrightarrow{\varphi_i} \bX_i$.
    \item Each $\abs{C_i}$ is the locus in $X_i$ of points of maximum dimensional stabilizer.
    \item Each $\phi_i$ restricts to an isomorphism $X_i \smallsetminus \phi_i^{-1}(C_{i-1}) \xrightarrow{\lowsimeq} X_{i-1} \smallsetminus \varphi_{i-1}^{-1}(\varphi_{i-1}(C_{i-1}))$.
    \item Each $E_i$ is the inverse image of\, $C_{i-1} \cup E_{i-1}$ under $\phi_i$.
    \item The maximum dimension of the stabilizers of points of\, $X_i$ is strictly smaller than that of the stabilizers of points of\, $X_{i-1}$. 
    \item The final stack $X_N$ has finite inertia, with coarse moduli space $X_N \xrightarrow{\varphi_N} \bX_N$.
    \item Each $\phi_i$ induces a schematic blow-up of good moduli spaces $\bX_i \to \bX_{i-1}$, which is an isomorphism over $\bX_{i-1} \smallsetminus \varphi_{i-1}(C_{i-1})$.
\end{enumerate}
The sequence $\Phi$ does not depend on $E$. This procedure, denoted by $\sF_{\ros} \colon X \mapsto X_N$, is functorial with respect to strong morphisms.
\end{theorem}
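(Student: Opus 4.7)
The plan is to deduce this from Edidin--Rydh's general reduction of stabilizers \cite[Theorem 2.11]{edidin-rydh-reduction-of-stabilizers}, and then to upgrade the output by checking that the snc structure is preserved throughout the process. First I would verify that the hypotheses of \cite[Theorem 2.11]{edidin-rydh-reduction-of-stabilizers} hold: $X$ is a smooth Artin stack of finite type over $\kk$ with affine diagonal, admits a good moduli space, and has trivial generic stabilizer (``no generic stackiness''). A direct invocation of that theorem then supplies the canonical sequence $\Phi \colon X_N \to \dotsb \to X_0 = X$ of saturated blow-ups along the smooth loci $C_i$ of maximum-dimensional stabilizer, together with assertions (i), (ii), (iii), (v), (vi), (vii), the independence of $\Phi$ from auxiliary data, and the functoriality with respect to strong morphisms.

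The remaining content is the snc assertion (iv), which I would establish by induction on $i$. The inductive claim is that $E_i := \phi_i^{-1}(C_{i-1} \cup E_{i-1})$ is a snc divisor on $X_i$ and that the center $C_i$ meets $E_i$ in snc position (i.e.\ $C_i$ is smooth and, at every geometric point $p$ of $C_i$, some collection of local equations cutting out $C_i$ can be extended to a system of ordinary parameters that is compatible with the local monomial parameters defining $E_i$ at $p$). Granted this claim, the blow-up $\phi_{i+1}$ is smooth-locally the stack-theoretic blow-up of a smooth $\kk$-scheme along a smooth center meeting a snc divisor transversally, which is a classical computation that produces a smooth ambient stack whose total transform of $C_i \cup E_i$ is again snc --- yielding (iv) at step $i+1$.

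To prove that $C_i$ meets $E_i$ in snc position, I would work smooth-locally using Alper's local structure theorem for good moduli spaces: around a point $p \in C_i$ of maximum-dimensional stabilizer $G$, $X_i$ is smooth-locally of the form $[V/G]$ for a smooth affine $\kk$-scheme $V$ with $G$ acting linearly, and $C_i$ corresponds to the fixed locus $V^G$. Because $E_i$ is a $G$-invariant snc divisor on $V$ by construction (it is pulled back from the previous stage, where the claim held by induction), the tangent space $T_pV$ carries a $G$-representation preserving the subspace cut out by the local equations of $E_i$, so one can diagonalize simultaneously with respect to $G$ and decompose $T_pV$ as an orthogonal direct sum of $G$-eigenlines, each of which is either tangent to or cut out by a single branch of $E_i$. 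The fixed subspace $T_pV^G$ is then spanned by the trivial-weight eigenlines, and the snc structure of $E_i$ restricts to a snc structure whose components are transverse to $C_i$. This is the key verification.

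The main obstacle is this last linear-algebra/local-structure step: keeping track of how $G$-representations interact with the logarithmic structure through iterated saturated blow-ups. The induction needs a slight strengthening to carry along the statement that $E_i$ is everywhere locally $G$-invariantly snc with respect to the étale-local stabilizer $G$, not merely snc as a divisor on $X_i$. Once this enhanced inductive hypothesis is set up, the base case $E_0 = E$ follows from the standing hypothesis (snc divisors are automatically stabilizer-equivariant at closed points of maximum stabilizer, by the same linearization argument), and the inductive step reduces to the explicit toric/linear computation indicated above. Finally, independence of $\Phi$ from $E$ and functoriality with respect to strong morphisms are inherited verbatim from \cite[Theorem 2.11]{edidin-rydh-reduction-of-stabilizers}, since the centers $C_i$ are defined intrinsically from the stabilizer stratification of $X_i$ and do not see $E$.
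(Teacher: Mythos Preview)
The paper does not give its own proof of this theorem: it is simply stated as ``a special case of Edidin--Rydh's \cite[Theorem 2.11]{edidin-rydh-reduction-of-stabilizers}'' and quoted for later use. Your approach of invoking that reference for (i)--(iii), (v)--(vii), independence from $E$, and functoriality is therefore exactly what the paper does, and your additional inductive sketch for the snc assertion (iv) goes beyond what the paper records. That extra work is reasonable but not needed here, since the paper treats the entire statement (including the divisorial tracking) as subsumed by the cited result.
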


The second ingredient is due to Bergh--Rydh; see \cite[Theorem B]{bergh-rydh-destackification}. For the definition of stacky blow-ups, see \cite[Section 3.5; in particular, Remark 3.7]{bergh-rydh-destackification}. 

\begin{theorem}[Destackification]\label{T:destackification}
Let $X$ be a smooth Artin stack over $\kk$ with finite inertia, and let $E \subset X$ be a snc divisor. Then there exists a sequence of stacky blow-ups $\Psi \colon X_N \xrightarrow{\psi_N} X_{N-1} \xrightarrow{\psi_{N-1}} \dotsb \xrightarrow{\psi_1} X_0 = X$ along smooth weighted centers $(Z_i,r_i)$, together with snc divisors $E_i \subset X_i$, such that the following hold: \begin{enumerate}
    \item Each $X_i$ is a smooth Artin stack over $\kk$, admitting a coarse moduli space $X_i \xrightarrow{\varphi} \bX_i$. 
    \item Each $E_i$ is the inverse image of\, $Z_{i-1} \cup E_i$ under $\psi_i$.
    \item If\, $\bE_N \subset \bX_N$ denotes the coarse moduli space of\, $E_N \subset X_N$, then $\bE_N$ is a snc divisor on $\bX_N$.
    \item Each $\psi_i$ induces a schematic blow-up of coarse moduli spaces $\bX_i \to \bX_{i-1}$, which is an isomorphism over $\bX_{i-1} \smallsetminus \varphi_{i-1}(Z_{i-1})$.
    \item The final stack
      $X_N$ admits a rigidification $X_N \to (X_N)_\rig$ such that the canonical morphism $(X_N)_\rig \to \bX_N$ is an iterated root stack in $\bE_N$.
\end{enumerate}
This procedure, denoted by $\sF_{\destack} \colon (X,E) \mapsto (X_N,E_N)$, is functorial with respect to smooth morphisms of pairs that are either stabilizer-preserving or tame gerbes.
\end{theorem}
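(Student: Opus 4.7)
This is essentially a restatement of \cite[Theorem B]{bergh-rydh-destackification}, so the plan is to reduce to loc.\ cit., possibly after a mild repackaging of the output. The approach I would take is to run the Bergh--Rydh destackification algorithm, which proceeds in two logically separable stages: first a \emph{divisorialification} stage that, via a canonical sequence of stacky blow-ups, ensures that the stacky structure of $X$ is concentrated on a snc divisor (i.e.\ the inertia becomes supported on a snc divisor containing $E$); and second, a \emph{gerbe reduction / torification} stage that further stacky-blows-up along smooth weighted centers contained in this enlarged divisor until the remaining stacky structure comes purely from root stack operations transverse to the snc divisor. The combinatorial input comes from the local description of toroidal Deligne--Mumford stacks in terms of stacky fans, and the algorithm is run fan-by-fan on a smooth toroidal atlas, with canonicity ensuring that the local algorithms glue.

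For the key steps, I would first invoke the existence of the canonical snc divisor $D \supset E$ such that $(X,D)$ is a toroidal pair capturing the stacky locus (this uses that $X$ is smooth with finite inertia, hence tame, and that the stabilizers act with \emph{toroidal} weights in local coordinates by a Luna-type slice argument). Then I would iterate Bergh--Rydh's fundamental step: at each stage choose the \emph{canonical} smooth weighted center $(Z_i,r_i)$ prescribed by the destackification algorithm, take the associated stacky blow-up $\psi_i$, and update $E_i := \psi_i^{-1}(Z_{i-1}\cup E_{i-1})_{\mathrm{red}}$. After finitely many steps, the algorithm terminates because the combinatorial complexity of the stacky fans at each atlas chart strictly decreases in a well-ordered invariant (essentially the number of non-free rays in the stacky fan, together with an auxiliary lexicographic quantity). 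Parts (i)--(iv) are then formal from the definition of a stacky blow-up, and part (v) follows from the final-stage description in \cite[Theorem B]{bergh-rydh-destackification}: one extracts the maximal rigidification $X_N\to (X_N)_{\mathrm{rig}}$ and verifies, locally on $\bX_N$, that $(X_N)_{\mathrm{rig}}\to\bX_N$ is the iterated root stack of $\bX_N$ along the components of $\bE_N$ with the multiplicities read off from the stacky fan at the final stage.

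The main obstacle, and the part that requires the most care, will be propagating the snc hypothesis on $E_i$ through every step while respecting functoriality. Specifically, one must show that each chosen center $Z_i$ has normal crossings with $E_{i-1}$, so that $E_i$ remains snc; this is automatic in the Bergh--Rydh setup because the centers are drawn from strata of the toroidal stratification associated to $D_i\supset E_i$, but verifying it requires keeping track of the compatibility of the successive enlargements of the toroidal divisor with the snc condition. A secondary, related obstacle is ensuring that the schematic blow-ups of coarse spaces in (iv) are produced with the correct centers, which follows from the good moduli space formalism (coarse and good moduli spaces agree here since inertia is finite) and the universal properties of root stacks and blow-ups.

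Finally, functoriality with respect to smooth stabilizer-preserving morphisms, and with respect to tame gerbes, is exactly the functoriality statement of \cite[Theorem B]{bergh-rydh-destackification}; it holds because each selected center $Z_i$ and weight $r_i$ is determined by the canonical toroidal invariants of $(X_i,E_i)$, which pull back under such morphisms. No additional argument beyond citing loc.\ cit.\ is needed for this aspect.
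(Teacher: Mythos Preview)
Your proposal is correct and aligns with the paper's approach: the paper does not prove this theorem at all but simply quotes it as \cite[Theorem B]{bergh-rydh-destackification}, and you correctly identify that the statement is a restatement of that result. Your additional sketch of the internal structure of the Bergh--Rydh algorithm goes beyond what the paper does, but is not needed here.
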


Applying the above procedures after Theorem~\ref{T:logarithmic-resolution}, we recover Hironaka's celebrated theorem in \cite{hironaka-resolution}. 

\begin{theorem}[``Coarse'' logarithmic resolution]\label{T:coarse-logarithmic-resolution}
Given a reduced, pure-dimensional scheme $X$ of finite type over $\kk$, there exists a birational and projective morphism $\Pi \colon X' \to X$ such that: \begin{enumerate}
    \item $X'$ is a smooth $\kk$-scheme; 
    \item $\Pi$ is an isomorphism over the smooth locus $X^\sm$ of $X$; 
    \item $\Pi^{-1}(X \smallsetminus X^\sm)$ is a snc divisor on $X'$.
\end{enumerate}
This procedure, denoted by $\sF_\clr \colon X \mapsto X'$, is functorial with respect to smooth morphisms.
\end{theorem}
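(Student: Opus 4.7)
The plan is to obtain $\Pi$ as the composition of three functorial procedures: first apply $\sF_\lr$ from Theorem~\ref{T:logarithmic-resolution} to desingularize $X$ as a stack, then apply $\sF_\ros$ from Theorem~\ref{T:reduction-of-stabilizers} to reduce the dimensions of the stabilizers to zero, and finally apply $\sF_\destack$ from Theorem~\ref{T:destackification} and pass to coarse moduli spaces. Concretely, set $(X_1,E_1) := (\sF_\lr(X),\Pi_1^{-1}(X \smallsetminus X^\sm))$, where $\Pi_1 \colon X_1 \to X$ is the morphism produced in Theorem~\ref{T:logarithmic-resolution}. By construction $X_1$ is a smooth Artin stack, $E_1 \subset X_1$ is a snc divisor, and $X_1 \to \bX_1 \to X$ expresses $X_1$ as a good moduli space over a scheme $\bX_1$ projective and birational over $X$.

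Before feeding $X_1$ into Theorem~\ref{T:reduction-of-stabilizers}, I would verify the hypotheses therein: since each multi-weighted blow-up is locally a stack quotient of a quasi-affine scheme by a torus (cf.\ \ref{X:local-description}, \ref{X:rees-algebra-description}), it has affine diagonal, and this property is preserved under iteration; the good moduli space exists by Remark~\ref{R:gms-of-multi-weighted-blow-up-along-center}; and since $X$ is a scheme, $X_1$ has trivial generic stabilizer because $\Pi_1$ is an isomorphism over the dense open $X^\sm \subset X$. Thus we may apply $\sF_\ros$, obtaining a sequence $X_2 \to X_1$ of saturated blow-ups along the maximal stabilizer loci and a snc divisor $E_2 \subset X_2$ built out of $E_1$ and the new exceptional loci (part (iv) of Theorem~\ref{T:reduction-of-stabilizers}); moreover $X_2$ has finite inertia, and the morphism descends to a schematic blow-up $\bX_2 \to \bX_1$ on good moduli spaces (parts (vi), (vii)).

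Finally I would apply $\sF_\destack$ to the pair $(X_2,E_2)$ to produce $X_3 \to X_2$, a sequence of stacky blow-ups along smooth weighted centers, together with a snc divisor $E_3 \subset X_3$ whose image $\bE_3$ in the coarse moduli space $\bX_3$ is a snc divisor on a smooth $\kk$-scheme. Setting $X' := \bX_3$ and $\Pi \colon X' \to X$ equal to the induced morphism on coarse/good moduli spaces, properties (i)--(iii) of Theorem~\ref{T:coarse-logarithmic-resolution} follow: $X'$ is smooth by Theorem~\ref{T:destackification}, and the preimage of $X \smallsetminus X^\sm$ in $X'$ is $\bE_3$, which is snc. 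Projectivity and birationality of $\Pi$ follow by composing the three projective birational morphisms of coarse/good moduli spaces produced at each step (Theorem~\ref{T:logarithmic-resolution}(iv), Theorem~\ref{T:reduction-of-stabilizers}(vii), Theorem~\ref{T:destackification}(iv)), and the fact that each of the three procedures is an isomorphism where the stack it is applied to is already ``good'' (a smooth scheme, or finite inertia, or trivial stabilizers) implies $\Pi$ is an isomorphism over $X^\sm$.

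The main obstacle, and the step to treat with care, is the \emph{functoriality} with respect to smooth morphisms $f \colon \widetilde{X} \to X$ of schemes. Functoriality of $\sF_\lr$ gives a smooth morphism $\widetilde{X}_1 \to X_1$ of pairs, but Theorems~\ref{T:reduction-of-stabilizers} and \ref{T:destackification} require the stronger notions of ``strong'' morphism and of stabilizer-preserving smooth morphism (or a tame gerbe), respectively. The key observation I would use to get past this is that a smooth morphism between quotient stacks of the form produced by $\sF_\lr$, arising from a smooth morphism of schemes downstairs, is automatically stabilizer preserving at every point (because the stabilizers at a point of $X_1$ are determined by the multi-weighted blow-up chart, which is pulled back étale-locally from $\widetilde{X}$) and hence strong in the sense of Edidin--Rydh; thus both $\sF_\ros$ and $\sF_\destack$ are functorial along such morphisms. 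Descending to coarse/good moduli spaces is compatible with smooth base change on schemes, so $\sF_\clr$ becomes functorial with respect to smooth morphisms of schemes, as required.
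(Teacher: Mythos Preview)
Your proposal is correct and follows exactly the approach the paper indicates: the paper does not give a detailed proof of Theorem~\ref{T:coarse-logarithmic-resolution}, but simply states it after the sentence ``Applying the above procedures after Theorem~\ref{T:logarithmic-resolution}'', meaning one composes $\sF_\lr$, $\sF_\ros$, and $\sF_\destack$ and passes to coarse spaces. Your write-up in fact supplies more detail than the paper does, particularly the verification of the hypotheses of Theorem~\ref{T:reduction-of-stabilizers} and the observation that the induced smooth morphism $\widetilde{X}_1 \to X_1$ is stabilizer-preserving (indeed, since $\widetilde{X}_1 = X_1 \times_X \widetilde{X}$ with $\widetilde{X} \to X$ a morphism of schemes, the projection is representable and induces isomorphisms on automorphism groups), which is exactly what is needed to propagate functoriality through $\sF_\ros$ and $\sF_\destack$.
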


\subsection{Remarks on hypotheses}

We first revisit the hypothesis in Theorem~\ref{T:logarithmic-embedded-resolution} that ``$X$ is reduced and generically toroidal''. We remark that: \begin{enumerate}
    \item\label{Rh-1} the first condition can be discarded by applying the  procedure for $X_\red$ to $X$, and replacing $X_N$ in part~\eqref{T:ler-1} of Theorem~\ref{T:logarithmic-embedded-resolution} with $(X_N)_{\red}$; 
    \item\label{Rh-2} one however cannot do without the second condition, or else the final smooth stack $X_N$ is not toroidal. 
\end{enumerate}
In spite of \eqref{Rh-2}, a mindless iteration of Theorem~\ref{T:invariant-drops} yields the following in general. 

\begin{theorem}[Logarithmic principalization]\label{T:principalization}
Given a closed substack $X$ of a smooth, $($strict$)$ toroidal Artin stack $Y$ over $\kk$, there exists a canonical sequence of multi-weighted blow-ups $\Pi \colon Y_N \xrightarrow{\pi_N} Y_{N-1} \xrightarrow{\pi_{N-1}} \dotsb \xrightarrow{\pi_1} Y_0 = Y$, together with proper transforms $X_i \subset Y_i$ of $X$, such that the following hold: \begin{enumerate}
    \item Each $Y_i$ is a smooth, (strict) toroidal Artin stack over $\kk$.
    \item For each $1 \leq i \leq N$, $\max\inv(X_i \subset Y_i) < \max\inv(X_{i-1} \subset Y_{i-1})$. Moreover, $\max\inv(X_N \subset Y_N) = (0)$; \textit{i.e.} $X_N = \emptyset$.
    \item $\Pi^{-1}(X)$ is a snc divisor on $Y_N$.
    \item Each $\pi_i$ is birational, surjective, universally closed, and factors as $Y_i \to \bY_i \to Y_{i-1}$, where $Y_i \to \bY_i$ is a good moduli space relative to $Y_{i-1}$, $\bY_i$ is normal, and $\bY_i \to Y_{i-1}$ is a schematic blow-up $($whence birational and projective$)$.
\end{enumerate}
Moreover, $\Pi$ is functorial with respect to logarithmically smooth morphisms of such pairs $X \subset Y$ $($whether or not surjective$)$.
\end{theorem}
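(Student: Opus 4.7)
The plan is to iterate Theorem~\ref{T:invariant-drops} on the defining ideal $I$ of $X$ until the proper transform becomes the unit ideal (equivalently, $X_N$ becomes empty). Working at the level of ideals (cf. \ref{X:log-invariant}), define the sequence inductively: set $Y_0 := Y$, $I_0 := I$, $X_0 := X$; given $Y_i$ and the proper transform $I_i \subset \sO_{Y_i}$ of $I$, if $\max\inv(I_i) = (0)$ stop at $i = N$; otherwise let $\pi_{i+1} \colon Y_{i+1} := \Bl_{J(I_i)}Y_i \to Y_i$ be the multi-weighted blow-up along the associated center $\sJ(I_i)_\bullet$ of \ref{X:associated-center}, and set $I_{i+1}$ to be the proper transform. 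The substack $X_i \subset Y_i$ is $V(I_i)$. Canonicity of the whole sequence is inherited from the canonicity of $\sJ(I_i)_\bullet$.

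Termination is a consequence of well-orderedness: by Theorem~\ref{T:invariant-drops} we have $\max\inv(I_{i+1}) < \max\inv(I_i)$ in the well-ordered set $\NN_\infty^{\leq \dim(Y),!}$ of \ref{X:well-ordered-set}, so the strictly decreasing sequence $\max\inv(I_0) > \max\inv(I_1) > \dotsb$ must eventually reach $(0)$. Unravelling \ref{X:log-invariant}, $\max\inv(I_N) = (0)$ is equivalent to $I_N = \sO_{Y_N}$, whence $X_N = V(I_N) = \emptyset$ and part (ii) holds.

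Parts (i) and (iv) hold at each step by the analysis in \S\ref{4.2}: smoothness and toroidality of $Y_{i+1}$ are built into Definition~\ref{D:multi-weighted-blow-up-along-center}, with strictness of the toroidal structure preserved because the new exceptional divisors meet the preexisting toroidal divisor transversally, as is visible from the explicit local description of \ref{X:local-description} or \ref{X:rees-algebra-description}. Birationality, surjectivity, and universal closedness of $\pi_{i+1}$ come from \ref{X:local-description}(ii), and the factorization $Y_{i+1} \to \bl_{J(I_i)}Y_i \to Y_i$ through a good moduli space is the content of Remark~\ref{R:gms-of-multi-weighted-blow-up-along-center}, with normality of $\bl_{J(I_i)}Y_i$ inherited from that of the toric model in Remark~\ref{R:cms-equals-gms}.

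The main subtlety is part (iii). Since $I_N = \sO_{Y_N}$, the total transform $\Pi^{-1}I \cdot \sO_{Y_N}$ has trivial ``residual'' factor, and by iterated local application of Proposition~\ref{P:total-transform-of-monomial-ideal}(ii) combined with \eqref{E:factored-morphism-formula-for-center}, it is a pure product of powers of exceptional divisor sheaves. Equivalently, $\Pi^{-1}I \cdot \sO_{Y_N}$ is monomial with respect to the toroidal logarithmic structure on $Y_N$, where the latter — by construction of each $Y_{i+1}$ — is obtained by combining the exceptional divisors newly introduced at each step with the preexisting toroidal divisor. The reduced closed substack underlying $\Pi^{-1}(X)$ is therefore a union of components of the toroidal divisor on $Y_N$; since the latter is an (s)nc divisor by (i), so is $\Pi^{-1}(X)$. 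Finally, functoriality with respect to logarithmically smooth morphisms of pairs (surjective or not) is inherited step by step from that in Theorem~\ref{T:invariant-drops-intro} and \ref{X:functoriality}, with empty blow-ups discarded exactly as in the proof of Theorem~\ref{T:logarithmic-embedded-resolution}.
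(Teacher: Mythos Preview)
Your proposal correctly expands the paper's own one-line justification (``a mindless iteration of Theorem~\ref{T:invariant-drops}''): the inductive definition of the $\pi_i$, termination via well-orderedness, and the verification of (i), (ii), (iv) and functoriality are all in line with the paper and are well referenced.

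There is, however, a genuine gap in your argument for (iii). You assert that once $I_N = \sO_{Y_N}$ the total transform $\Pi^{-1}I \cdot \sO_{Y_N}$ is ``a pure product of powers of exceptional divisor sheaves''. But the factorization one actually has at each step, namely $\pi_{J(I_i)}^{-1}I_i \cdot \sO_{Y_{i+1}} = (\textrm{exceptional monomial}) \cdot (\textrm{residual})$ from Proposition~\ref{P:weak-transform-justification}, has the \emph{weak} transform as residual, not the proper transform; since the proper transform strictly contains the weak transform in general (Remark~\ref{R:transforms-properties}), triviality of the iterated proper transform does not make the total transform ideal invertible. For instance, with $Y = \AA^{2;0}$ and $I = (x^2,xy,y^3)$ one computes $\inv_0(I) = (2,2)$ with reduced center $(x,y)$; after the single blow-up the proper transform is already the unit ideal, yet on the $y'$-chart $\pi^{-1}I \cdot \sO = (x'u^2,u^3)$, which is not principal at $x'=u=0$. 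The conclusion of (iii) is nevertheless correct under the intended reading (consistent with Theorem~\ref{T:logarithmic-embedded-resolution}(iii)): since $X_N = \emptyset$, the support $\lvert\Pi^{-1}(X)\rvert$ lies in the union of the exceptional loci of the $\pi_i$, which by construction are components of the (s)nc toroidal divisor of $Y_N$, so $\Pi^{-1}(X)_{\red}$ is snc. If one instead wants $\Pi^{-1}I \cdot \sO_{Y_N}$ to be genuinely invertible, the fix is to iterate \emph{weak} transforms --- Theorem~\ref{T:invariant-drops} guarantees these also drop strictly in $\NN_\infty^{\leq \dim Y,!}$, and the desired factorization then follows directly. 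You should drop the appeal to Proposition~\ref{P:total-transform-of-monomial-ideal}(ii) and \eqref{E:factored-morphism-formula-for-center} here, as those concern the transform of the monomial \emph{center}, not of $I$.
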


Finally, as pointed out in \cite[Section 8.1]{abramovich-temkin-wlodarczyk-weighted-resolution}, the hypothesis in Theorem~\ref{T:logarithmic-resolution} that ``$X$ is pure-dimensional'' can of course be dropped; \textit{i.e.} one can definitely reduce the general case to that special case, although functoriality is not \textit{a priori} granted. We refrain from discussing this here because the methods are standard. We however emphasize that we do not assume in Theorem~\ref{T:logarithmic-embedded-resolution} that $X$ or $Y$ is pure-dimensional or that $X \subset Y$ is of pure codimension.

\subsection{{Remarks on functoriality}}

One can ask if functoriality in the procedure of Theorem~\ref{T:invariant-drops-intro} holds more generally for a logarithmically smooth, surjective morphism $f \colon \widetilde{Y} \to Y$ of smooth, toroidal Artin stacks over $\kk$. Let $I \subset \sO_Y$ denote the ideal of $X \subset Y$. If $f$ happens to be strict (and hence smooth), we saw in Section~\ref{X:functoriality} that functoriality would then follow from \begin{enumerate}
    \item functoriality of the associated center $\sJ(I)_\bullet$ with respect to a logarithmically smooth, surjective morphisms $f \colon \widetilde{Y} \to Y$ (see Section~\ref{X:associated-center}),
    \item and functoriality of Satriano's construction (\textit{i.e.} the formation of the canonical smooth, toroidal Artin stack over a toroidal Artin stack) with respect to strict morphisms of toroidal Artin stacks $\mathfrak{f} \colon \widetilde{\fY} \to \fY$ (see Remark~\ref{R:satriano-construction}), e.g. the morphism $\tBl_{\underline{J}(f^{-1}I \cdot \sO_{\widetilde{Y}})_\bullet}\widetilde{Y} = \widetilde{Y} \times_Y \tBl_{\underline{J}(I)_\bullet}Y \to \tBl_{\underline{J}(I)_\bullet}Y$ obtained from $f$ by pull-back.
\end{enumerate} 
However, the next example demonstrates that Satriano's construction is \emph{not} necessarily functorial with respect to logarithmically smooth but non-strict morphisms, and hence the answer to the earlier question is no.

\begin{example}
Let $\Upgamma$ denote the sub-monoid in $\NN^3$ generated by $(1,0,0)$, $(0,1,0)$, $(1,0,1)$, and $(0,1,1)$. Let $\fY$ denote the toric $\kk$-variety $\Spec(\Upgamma \inj \kk[\Upgamma])$. The dual cone $C(\Upgamma)^\vee$ of $C(\Upgamma)$ has extremal rays $\be_1$, $\be_2$, $\be_3$, and $\be_1+\be_2-\be_3$. Thus, the monoid homomorphism $\iota \colon \Upgamma \inj \NN^4$, which maps every $\ba = (\ua_1,\ua_2,\ua_3) \in \Upgamma$ to $(\ua_1,\ua_2,\ua_3,\ua_1+\ua_2-\ua_3)$, then induces the canonical smooth, toroidal Artin stack over $Y$: \[
    \sY = \left[\Spec\left(\NN^4 \longinj \kk\left[\NN^4\right]\right) \q D(\Coker(\iota^\gp))\right] \lra \fY.
\]
Next, consider the diagrams \begin{equation}\label{E:functoriality-counterexample}
    \begin{tikzcd}
    \AA^{1;1} \arrow[to=2-1, equal] \arrow[to=1-2, dotted, "?"] & \sY \arrow[to=2-2] \\ 
    \widetilde{\fY} = \AA^{1;1} \arrow[to=2-2, "\mathfrak{f}"] & \fY\rlap{,}
    \end{tikzcd} \qquad \qquad \qquad \begin{tikzcd}
    \NN \arrow[to=2-1, equal] & \NN^4 \arrow[to=1-1, dotted, swap, "?"] \\
    \NN & \Upgamma\rlap{,} \arrow[to=2-1, swap, "\varphi"] \arrow[to=1-2, hookrightarrow, swap, "\iota"]
    \end{tikzcd}
\end{equation}
where: \begin{enumerate}
    \item $\AA^{1;1} := \Spec(\NN \inj \kk[\NN])$; 
    \item $\mathfrak{f} \colon \AA^{1;1} \to \fY$ is induced by the monoid homomorphism $\varphi \colon \Upgamma \to \NN$ which maps $\ba = (\ua_1,\ua_2,\ua_3) \in \Upgamma$ to $\ua_1+\ua_2$; \textit{i.e.} maps each generator of $\Upgamma$ to $1$; 
    \item and the vertical arrows are formations of smooth, toroidal Artin stacks over the respective toric $\kk$-schemes.
\end{enumerate}
Here, there are two toric morphisms $\AA^{1;1} \to \sY$ that could fill the dotted arrow in \eqref{E:functoriality-counterexample}! They are induced by \begin{enumerate}
    \item[(a)] $\NN \xleftarrow{\nu_1} \NN^4$, which maps $\bv = (\uv_i)_{i=1}^4 \in \NN^4$ to $\uv_1+\uv_2$,
    \item[(b)] $\NN \xleftarrow{\nu_2} \NN^4$, which maps $\bv = (\uv_i)_{i=1}^4 \in \NN^4$ to $\uv_3+\uv_4$.
\end{enumerate}
\end{example}

Nevertheless, we can still show the following. 

\begin{lemma}\label{L:functoriality-log-smooth}
The procedure of\, Theorem~\ref{T:invariant-drops-intro} $($and hence of\, Theorem~\ref{T:logarithmic-embedded-resolution}$)$ is functorial with respect to logarithmically smooth, equidimensional morphisms of smooth, toroidal Artin stacks over $\kk$.
\end{lemma}

\begin{proof}
As noted earlier, it remains to show that Satriano's construction is functorial with respect to logarithmically smooth, equidimensional morphisms $\mathfrak{f} \colon \widetilde{\fY} \to \fY$ of toroidal Artin stacks over $\kk$. Let us first consider the local setting where $\widetilde{\fY} = \Spec(Q \inj \kk[Q])$, $\fY = \Spec(P \inj \kk[P])$, and $\mathfrak{f}$ is induced by a monoid homomorphism $\varphi \colon P \to Q$. Consider the diagrams \begin{equation}\label{E:functoriality-log-smooth}
    \begin{tikzcd}
    \widetilde{\sY} \arrow[to=2-1] \arrow[to=1-2, dotted, "\exists!"] & \sY \arrow[to=2-2] \\ 
    \widetilde{\fY} \arrow[to=2-2, "\mathfrak{f}"] & \fY\rlap{,}
    \end{tikzcd} \qquad \qquad \qquad \begin{tikzcd}
    \NN^{S_Q} & \NN^{S_P} \arrow[to=1-1, dotted, swap, "\exists!"] \\
    Q \arrow[to=1-1, hookrightarrow, "\iota_Q"] & P\rlap{,} \arrow[to=2-1, swap, "\varphi"] \arrow[to=1-2, hookrightarrow, swap, "\iota_P"]
    \end{tikzcd}
\end{equation}
where: \begin{enumerate}
    \item the vertical arrows are formations of smooth, toroidal Artin stacks $\widetilde{\sY}$ and $\sY$ over the respective strict toroidal $\kk$-schemes $\widetilde{\fY}$ and $\fY$; 
    \item $S_P$ (resp.\ $S_Q$) is the set of extremal rays of $C(P)^\vee$ (resp.\ $C(Q)^\vee$); 
    \item and $\iota_P$ (resp.\ $\iota_Q$) is the morphism that maps every $\ba \in P$ (resp.\ $\ba \in Q$) to $\sum_{\rho \in S_P}{(\ba \cdot \bu_\rho) \cdot \be_\rho}$ (resp.\ $\sum_{\theta \in S_Q}{(\ba \cdot \bu_{\theta}) \cdot \be_\theta}$).
\end{enumerate}
We now claim that there is one and only one toric morphism $\widetilde{\sY} \to \sY$ filling the dotted arrow in \eqref{E:functoriality-log-smooth}. Indeed, a monoid homomorphism $\nu = (c_{\theta,\rho})_{\rho \in S_P,\ \theta \in S_Q} \colon \NN^{S_P} \to \NN^{S_Q}$ fills up the dotted arrow in \eqref{E:functoriality-log-smooth} if and only if \[
    \varphi^\vee(\bu_\theta) = \sum_{\rho \in S_P}{c_{\theta,\rho} \bu_\rho} \quad \textrm{for every } \theta \in S_Q,
\]
where $\varphi^\vee \colon C(Q)^\vee \to C(P)^\vee$ is the dual morphism to $\varphi \colon C(P) \to C(Q)$. In light of this observation, it suffices to show that if $f$ is equidimensional, then for every $\theta \in S_Q$, there is one and only one way of expressing $\varphi^\vee(\bu_\theta)$ as an $\NN$-linear combination of the vectors $\bu_\rho$ for $\rho \in S_P$. This is a mere consequence of \cite[Lemma 4.1]{abramovich-karu-weak-semistable-reduction}: if $f$ is equidimensional, the image of every extremal ray in $C(Q)^\vee$ under $\varphi^\vee$ is an extremal ray in $C(P)^\vee$. 

Finally, using Kato's criterion for logarithmic smoothness, see \cite[Section 8]{kato-toric-singularities}, the above argument can be patched up to settle the case where $\widetilde{\fY}$ and $\fY$ are any strict toroidal $\kk$-schemes. From this and Remark~\ref{R:satriano-construction}, the lemma can be deduced in the general case where $\widetilde{\fY}$ and $\fY$ are toroidal Artin stacks.
\end{proof}

\begin{remark} Note that unless $\widetilde{\fY} \to \fY$ is strict (see Remark~\ref{R:satriano-construction}), the induced morphism $\widetilde{\sY} \to \widetilde{\fY} \times_\fY \sY$ is typically not an isomorphism. This already happens when $\fY$ is a simplicial toric variety and $\widetilde{\fY}$ is not. Instead, the above morphism exhibits $\widetilde{\sY}$ as Satriano's construction applied to the fibre product $\widetilde{\fY} \times_\fY \sY$ in the category of fs logarithmic Artin stacks.
\end{remark}

\renewcommand\thesection{\Alph{section}}
\setcounter{section}{0}

\section*{Appendix. A lemma on quotient stacks}
\addcontentsline{toc}{section}{Appendix. A lemma on quotient stacks}
\refstepcounter{section}
\setcounter{subsection}{1}

\begin{applemma}[\textrm{$=$ \cite[Lemma 1.3.1]{quek-rydh-weighted-blow-up}}]\label{L:slicing}
Let $A$ be a finitely generated abelian group, with corresponding
diagonalizable algebraic group $D(A)$. Let $R = \bigoplus_{\alpha \in
  A}{R_\alpha}$ be an $A$-graded algebra, and let $r \in R$ be a
homogeneous element of degree $a \in A$. Then $R/(r-1)$ is an
$A/\langle a \rangle$-graded algebra, and the $A/\langle a \rangle$-graded homomorphism $R
\to R/(r-1)$ induces a morphism of algebraic stacks
\[
    \left[\left(\Spec(R/(r-1)\right) \q D\left(A/\langle a \rangle\right)\right] \xrightarrow{\;\lowsimeq\;} [\Spec(R) \q D(A)].
\]
This is an isomorphism if $r$ is invertible and $a$ has infinite order.
\end{applemma}

Note that as $A$-graded modules $R \simeq R/(r-1)[r,r^{-1}]$, but the algebra
structures do not coincide. Similarly, $R/(r-1) \simeq \bigoplus_{[\alpha] \in A/\langle a \rangle} R_\alpha$
but only as $A/\langle a \rangle$-graded modules. 

\begin{proof}
We need to prove that the natural $D(A)$-equivariant map
\[
\Spec\left(R/(r-1)\right) \times^{D(A/\langle a \rangle)} D(A) \lra \Spec(R)
\]
is an isomorphism. Let us elaborate on the left-hand side. We have two commuting actions on ${\Spec\left(R/(r-1)\right) \times D(A)} = \Spec(R/(r-1)[v^A]) := \Spec(R/(r-1)[v^\alpha \colon \alpha \in A])$:
\begin{enumerate}
\item the diagonal $D(A/\langle a \rangle)$-action, given by $(y,t) \cdot s=(ys,s^{-1}t)$, where in the first factor the action corresponds to the induced $A/\langle a \rangle$-grading on $R/(r-1)$, and
\item the $D(A)$-action on the second factor given by $(y,t) \cdot s=(y,ts)$.
\end{enumerate}
The $D(A/\langle a \rangle)$-action is free with quotient $\Spec\left(R/(r-1)\right) \times^{D(A/\langle a \rangle)} D(A) = \Spec(R^\circ)$, where $R^\circ$ is the degree~$0$ part of $R/(r-1)[v^A]$ with the $A/\langle a \rangle$-grading. The $D(A)$-action endows $R^\circ$ with the following $A$-grading: 
\[
R^\circ = \bigoplus_{\alpha \in A} \left(R/(r-1)\right)_{[\alpha]} v^\alpha.
\]
The natural $A$-graded algebra homomorphism $R \to R^\circ$ is thus an isomorphism.
\end{proof}


\newcommand{\etalchar}[1]{$^{#1}$}


\begin{thebibliography}{ATW20b+++}

\bibitem[AGV08]{AGV}
D.~Abramovich, T.~Graber, and A.~Vistoli, \emph{Gromov-{W}itten theory of
  {D}eligne-{M}umford stacks}, Amer.\ J.~Math.\ \textbf{130} (2008), no.~5,
  1337--1398. 

\bibitem[AK94]{abramovich-karu-weak-semistable-reduction}
D.~Abramovich and K.~Karu, \emph{Weak semistable reduction in
  characteristic $0$}, Invent.\ Math.\ \textbf{139} (1994), no.~2, 241--273.

\bibitem[ATW20a]{abramovich-temkin-wlodarczyk-principalization-of-ideals-on-toroidal-orbifolds}
D.~Abramovich, M.~Temkin, and J.~W{\l}odarczyk, \emph{Principalization of ideals on toroidal orbifolds}, J.~Eur.\ Math.\ Soc.~(JEMS)  \textbf{22} (2020), no.~12, 3805–-3866.

\bibitem[ATW20b]{abramovich-temkin-wlodarczyk-toroidal-destackification-kummer-blow-ups}
\bysame, \emph{Toroidal
  orbifolds, destackification, and \textit{Kummer} blowings up}, Algebra and
  Number Theory \textbf{14} (2020), no.~8, 2001--2035.

\bibitem[ATW24]{abramovich-temkin-wlodarczyk-weighted-resolution}
\bysame, \emph{Functorial
embedded resolution via weighted blowings up}, Algebra Number Theory \textbf{18} (2024), no.~8, 1557--1587,


\bibitem[Alp13]{alper-good-moduli-spaces}
J.~Alper, \emph{Good moduli spaces for {A}rtin stacks}, Ann.\ Inst.\ Fourier (Grenoble)  \textbf{63} (2013), no.~6, 2349--2402.

\bibitem[BR19]{bergh-rydh-destackification}
D.~Bergh and D.~Rydh, \emph{Functorial destackification and weak
  factorization of orbifolds}, preprint \arXiv{1905.00872}  (2019).  

\bibitem[BM97]{bierstone-milman-desingularization}
E.~Bierstone and P.\,D.~Milman, \emph{Canonical desingularization in
  characteristic zero by blowing up the maximum strata of a local invariant},
  Invent.\ Math.\ \textbf{128} (1997), no.~2, 207--302.

\bibitem[BN20]{bultot-nicaise-log-smooth-models}
E.~Bultot and J.~Nicaise, \emph{Computing motivic zeta functions on
  log smooth models}, Math.\ Z.~\textbf{295} (2020), no.~1-2, 427-–462.

\bibitem[Cad07]{Cadman-root-stacks}
C.~Cadman, \emph{Using stacks to impose tangency conditions on curves},
  Amer.~J.\ Math.\ \textbf{129} (2007), no.~2, 405--427. 

\bibitem[CLS11]{cox-toric-varieties}
D.\,A.~Cox, J.\,B.~Little, and H.\,K.~Schenck, \emph{Toric varieties}, Grad.\ Stud.\ Math., vol.~124, Amer.\ Math.\ Soc,, Providence, RI, 2011. 

\bibitem[DL92]{denef-loeser-zeta-functions}
J.~Denef and F.~Loeser, \emph{Caract\'eristiques
  d'\text{E}uler-\text{P}oincar\'e, fonctions z\'eta locales et modifications
  analytiques}, J.~Amer.\ Math.\ Soc.\ \textbf{5} (1992), no.~4, 705--720.

\bibitem[ER21]{edidin-rydh-reduction-of-stabilizers}
D.~Edidin and D.~Rydh, \emph{Canonical reduction of stabilizers for Artin
  stacks with good moduli spaces}, Duke Math.~J.~\textbf{170} (2021), no.~5, 827--880. 

\bibitem[EV03]{encinas-villamayor-desingularization}
S.~Encinas and O.~Villamayor, \emph{A new proof of desingularization
  over fields of characteristic zero}, Proceedings of the International
  Conference on Algebraic Geometry and Singularities (Spanish) (Sevilla, 2001), 
 Rev.\ Mat.\ Iberoamericana \textbf{19} (2003), no.~2, 339--353.

\bibitem[GS15]{geraschenko-satriano-toric-stacks-I}
A.~Geraschenko and M.~Satriano, \emph{Toric stacks I: The
  theory of stacky fans}, Trans.\ Amer.\ Math.\ Soc.\ \textbf{367} (2015), no.~2,  1033--1071.

\bibitem[Har77]{hartshorne-algebraic-geometry}
R.~Hartshorne, \emph{Algebraic geometry}, Grad.\ Texts in Math.,
  vol.~52, Springer-Verlag, New York-Heidelberg, 1977.

\bibitem[Hir64]{hironaka-resolution}
H.~Hironaka, \emph{Resolution of singularities of an algebraic variety
  over a field of characteristic zero, I, II}, Ann.\ of Math.~(2) 
  \textbf{79} (1964), no.~2, 109--203, 205--326. 

\bibitem[Kat94]{kato-toric-singularities}
K.~Kato, \emph{Toric singularities}, Amer.~J.\ Math \textbf{116} (1994),
  no.~5, 1073--1099.

\bibitem[KKM{\etalchar{+}}73]{kempf-kundsen-faye-mumford-toroidal-embeddings}
G.~Kempf, F.\,~F.~Knudsen, D.~Mumford, and B.~Saint-Donat, \emph{Toroidal
  embeddings. {I}}, Lecture Notes in Math., vol. 339, Springer-Verlag,
Berlin-New York, 1973.

\bibitem[Kou76]{kouchnirenko-newton-polytopes}
A.\,G.~Kouchnirenko, \emph{Poly\`edres de {N}ewton et nombres de {M}ilnor},
  Invent.\ Math.\ \textbf{32} (1976), no.~1, 1--31.

\bibitem[LVP11]{lemahieu-van-proeyen-nondegenerate-surface-singularities}
A.~Lemahieu and L.~Van~Proeyen, \emph{Monodromy conjecture for surface
  singularities}, Trans.\ Amer.\ Math.\ Soc.\ \textbf{363} (2011), no.~9,
  4801--4829.

\bibitem[Mat89]{matsumura-commutative-ring-theory}
H.~Matsumura, \emph{Commutative ring theory}, vol.~8, Cambridge Stud.\ Adv.\ Math., vol.~8, Cambridge Univ.\ Press, Cambridge, 1989.

\bibitem[McQ20]{mcquillan-resolution}
M.~McQuillan, \emph{Very fast, very functorial, and very easy resolution
  of singularities}, Geom.\ Funct.\ Anal.~\textbf{30} (2020), no.~3,  858--909.

\bibitem[Ols16]{olsson-algebraic-stacks}
M.~Olsson, \emph{Algebraic spaces and stacks}, vol.~62, Amer.\ Math.\ Soc.\ Colloq.\ Publ., vol.~62, Amer.\ Math.\ Soc.,  Providence, RI, 2016.

\bibitem[Que20]{quek-weighted-log-resolution}
  M.\,H.~Quek, \emph{Logarithmic resolution via weighted toroidal blow-ups},
Algebr.\ Geom.~\textbf{9} (2022), no.~3, 311--363.

\bibitem[Que24]{quek-motivic-monodromy-conjecture} 
  \bysame, \emph{Around the motivic monodromy conjecture for non-degenerate hypersurfaces}, 
 Manuscr.\ Math.~\textbf{173} (2024), no.~3-4, 1015--1059.
  
\bibitem[QR22]{quek-rydh-weighted-blow-up}
M.\,H.~Quek and D.~Rydh, \emph{Weighted blow-ups}, available from 
\url{https://people.kth.se/~dary/weighted-blowups20220329.pdf}, 2022.

\bibitem[Ree58]{rees-problem-of-zariski}
D.~Rees, \emph{On a problem of {Z}ariski}, Illinois J.~Math.\ \textbf{2} (1958),
  145--149.

\bibitem[Sat13]{satriano-canonical-artin-stacks-for-log-smooth-schemes}
M.~Satriano, \emph{Canonical Artin stacks over log smooth
  schemes}, Math.\ Z.~\textbf{274} (2013), no.~3-4, 779--804.

\bibitem[SU21]{satriano-usatine-motivic-integration-Artin-stacks}
M.~Satriano and J.~Usatine, \emph{A motivic change of variable formula
for Artin stacks}, preprint, \arXiv{2109.09800} (2021). 

\bibitem[Sta24]{stacks-project}
The {Stacks Project Authors}, \emph{{S}tacks {P}roject},
\url{https://stacks.math.columbia.edu}, 2024.

\bibitem[W{\l}o05]{wlodarczyk-simple-hironaka-resolution}
J.~W{\l}odarczyk, \emph{Simple Hironaka resolution in
  characteristic zero}, J.~Amer.\ Math.\ Soc.\ \textbf{18} (2005), no.~4,
  779--822.

\end{thebibliography}
\end{document}